\pdfoutput=1
\documentclass[11pt]{article}

\usepackage[a4paper,margin=28mm]{geometry}

\usepackage[T1]{fontenc}
\usepackage[utf8]{inputenc}
\usepackage{lmodern}
\usepackage{amsmath,amssymb,amsthm,mathtools}
\usepackage{bm}
\usepackage{mathrsfs}

\usepackage{graphicx}
\usepackage{booktabs}
\usepackage{array}
\usepackage{enumitem}

\usepackage{aliascnt}
\usepackage[colorlinks=true,linkcolor=blue,citecolor=blue,urlcolor=blue]{hyperref}
\usepackage[nameinlink,capitalize]{cleveref}

\theoremstyle{plain}
\newtheorem{theorem}{Theorem}[section]

\newaliascnt{lemma}{theorem}
\newtheorem{lemma}[lemma]{Lemma}
\aliascntresetthe{lemma}

\newaliascnt{proposition}{theorem}
\newtheorem{proposition}[proposition]{Proposition}
\aliascntresetthe{proposition}

\newaliascnt{corollary}{theorem}
\newtheorem{corollary}[corollary]{Corollary}
\aliascntresetthe{corollary}

\theoremstyle{definition}

\newaliascnt{definition}{theorem}
\newtheorem{definition}[definition]{Definition}
\aliascntresetthe{definition}

\newaliascnt{assumption}{theorem}

\aliascntresetthe{assumption}

\newaliascnt{example}{theorem}
\newtheorem{example}[example]{Example}
\aliascntresetthe{example}

\theoremstyle{remark}

\newaliascnt{remark}{theorem}
\newtheorem{remark}[remark]{Remark}
\aliascntresetthe{remark}

\crefname{assumption}{Assumption}{Assumptions}
\Crefname{assumption}{Assumption}{Assumptions}

\newcommand{\R}{\mathbb{R}}

\newcommand{\N}{\mathbb{N}}

\newcommand{\Tend}{\mathfrak{T}}          

\newcommand{\Wsp}{\mathcal{W}}            
\newcommand{\norm}[1]{\left\lVert #1 \right\rVert}
\newcommand{\seminorm}[1]{\left\lvert #1 \right\rvert}
\newcommand{\inner}[2]{\left( #1,#2 \right)}
\newcommand{\dual}[2]{\left\langle #1,#2 \right\rangle}
\newcommand{\diff}{\mathop{}\!\mathrm{d}}
\newcommand{\Km}{K_{\mu}}
\newcommand{\Dm}{\mathcal{D}_{\mu}}
\newcommand{\Lp}{L^{2}(\Omega)^{d}}

\makeatletter

\@addtoreset{equation}{section}
\makeatother
\title{Graph-space well-posedness for diffusion equations with degenerate instantaneous diffusion}

\author{
  Hiroki Ishizaka\\
  Team FEM, Matsuyama, Japan\\
  E-mail: \texttt{h.ishizaka005@gmail.com}
}

\date{}

\begin{document}

\maketitle

\begin{abstract}
We study diffusion equations with completely monotone memory when the instantaneous diffusion form is merely non-negative and may therefore lose coercivity.  For a kernel whose Bernstein representing measure has finite total mass
\(M_{0}=\nu([0,\infty))\), we introduce an extended state consisting of the physical variable and its continuum of internal variables.  The aggregation and constant-embedding operators are adjoint with respect to the memory energy, and the resulting cross-term cancellation makes the augmented generator $m$-dissipative.  This yields a unique mild solution, Lipschitz dependence on the data, and a contraction estimate that contains no positive lower bound for the instantaneous form.  The zero-prehistory trajectories form a memory graph space, in which the problem is well posed in the sense of Hadamard.  If, in addition, the first Bernstein moment
\(M_{1}=\int_{[0,\infty)}\lambda\,\diff\nu(\lambda)\) is finite, the memory potential and first-moment field possess the regularity needed to identify the semigroup solution with an encoded weak formulation and to obtain explicit stability bounds.  We further prove uniform norm-resolvent convergence and convergence of the associated semigroups when a coercive instantaneous contribution vanishes.  Under an additional $L^{2}(0,\Tend;V)$-regularity assumption on the limiting solution, the convergence rate in the memory graph norm is $O(\varepsilon^{1/2})$.  These results provide a continuous stability target for structure-preserving and certified discretisations of memory-dominated diffusion.
\end{abstract}

\noindent\textbf{Keywords.}
diffusion with memory; completely monotone kernel; internal variables;
memory graph space; $m$-dissipative operator; vanishing coercivity;
Volterra equation; certified stability

\medskip
\noindent\textbf{Mathematics Subject Classification 2020.}
45K05; 35K90; 45M05; 47D06; 47A55


\section{Introduction}
Let $\Omega\subset\R^{d}$, $d\in\{1,2,3\}$, be a bounded Lipschitz domain, let $H=L^{2}(\Omega)$ and $V=H^{1}_{0}(\Omega)$, and consider
\begin{align}\label{eq:model}
\partial_{t}u(t)+\mathsf A_{0}u(t)+\mathsf A_{1}(k*u)(t)
&=f(t) \quad\text{in }V',
&u(0)&=u_{0},
\end{align}
on a finite interval $(0,\Tend)$.  The operators $\mathsf A_{0},\mathsf A_{1}:V\to V'$ are induced by bounded symmetric bilinear forms $a_{0}$ and $a_{1}$, while $k$ is a locally integrable completely monotone kernel.  Such equations arise in heat conduction with memory \cite{GurtinPipkin1968}, in viscoelasticity and materials with fading memory \cite{Dafermos1970,AmendolaFabrizioGolden2012}, and in Maxwell-type models in which the absence of solvent viscosity removes the instantaneous coercive contribution \cite{RenardyHrusaNohel1987,WangRenardy2011}.

When $a_{0}$ is coercive, the standard energy argument controls $u$ in $L^{2}(0,\Tend;V)$ and the memory term may be treated as a positive-type perturbation.  The situation changes when $a_{0}$ is only non-negative.  A positive-type memory term remains dissipative, but it does not supply a frequency-uniform lower bound in the instantaneous energy norm.  More precisely, the companion structural paper \cite{Ishizaka2026Coercivity} proves that, for every non-trivial locally integrable completely monotone kernel and every non-zero memory form, the quotient of memory dissipation by the instantaneous $a_{1}$-energy has infimum zero on every fixed time interval.  We recall this no-go result in \cref{thm:structural-nogo}.  Thus, the lost $L^{2}(0,\Tend;V)$-coercivity cannot be recovered from memory alone.

The purpose of the present paper is to identify the state space in which the degenerate equation is nevertheless well posed.  Writing the kernel as
\[
k(t)=\int_{[0,\infty)}e^{-\lambda t}\,\diff\nu(\lambda),
\]
we adjoin the internal-variable family
\[
\zeta(\lambda,t)=\int_{0}^{t}e^{-\lambda(t-s)}u(s)\,\diff s.
\]
If the representing measure has finite total mass $M_{0}=\nu([0,\infty))$, the internal variables belong to a Hilbert space $\mathcal K$ and the augmented state $(u,\zeta)$ belongs to $\Wsp=H\times\mathcal K$.  The aggregation operator $J\zeta=\int\zeta\,\diff\nu$ and the constant embedding $Eu(\lambda)=u$ satisfy an exact adjoint relation.  Consequently, the two coupling terms cancel in the energy identity and the augmented generator is $m$-dissipative.  This gives a contraction semigroup and Hadamard well-posedness without using any positive coercivity constant for $a_{0}$; see \cref{thm:graphwp,cor:hadamard}.

The extended-state construction also clarifies what is meant here by a \emph{memory graph space}.  The ambient space $\Wsp$ is the state space for the semigroup.  The zero-prehistory solution trajectories form the graph of the causal internal-variable map $u\mapsto\zeta_{u}$ inside $C([0,\Tend];\Wsp)$.  The graph norm is precisely the supremum in time of the extended memory energy.  This terminology is therefore distinct from the operator graph norm of the generator.

A second objective is to connect the semigroup solution with a weak formulation that remains meaningful even when $u\notin L^{2}(0,\Tend;V)$.  Under the additional first-moment condition
\[
M_{1}=\int_{[0,\infty)}\lambda\,\diff\nu(\lambda)<\infty,
\]
the memory potential $\xi=\int\zeta\,\diff\nu$ and the first-moment field $\eta=\int\lambda\zeta\,\diff\nu$ satisfy
\[
\partial_{t}\xi=M_{0}u-\eta.
\]
This identity allows the $a_{0}$-term to be transferred from $u$ to the $V$-valued fields $\xi$ and $\eta$.  We obtain an encoded weak formulation, identify the graph-space mild solution with it, and derive explicit stability bounds independent of the instantaneous coercivity; see \cref{def:weak,thm:exist,cor:cert}.

Finally, we treat the actual limit in which instantaneous coercivity vanishes.  For $a_{0}^{\varepsilon}=a_{0}+\varepsilon a_{v}$, with $a_{v}$ coercive, the corresponding augmented generators act on the same memory state space.  We prove norm-resolvent convergence at rate $O(\varepsilon)$ and deduce convergence of the semigroups and inhomogeneous solutions in $C([0,\Tend];\Wsp)$ from an explicit resolvent commutator identity rather than from an appeal to the Trotter--Kato theorem; on resolvent-smoothed initial states the semigroup convergence then inherits the rate $O(\varepsilon)$.  If the limiting physical component has the additional regularity $u^{0}\in L^{2}(0,\Tend;V)$, an energy argument gives an $O(\varepsilon^{1/2})$ convergence rate in the memory graph norm.

The moment restrictions separate the scope of the results.  Finite $M_{0}$ is sufficient for the extended-state semigroup theory.  Finite $M_{1}$ is used only for the encoded weak formulation and the explicit estimates involving $\eta$.  Weakly singular fractional kernels have $M_{0}=M_{1}=\infty$ and lie outside this finite-mass state space, although their solvability can be treated within classical abstract Volterra frameworks \cite{GripenbergLondenStaffans1990,Pruss1993}; see also \cite{Zacher2009}.  Constructing an equally explicit coercivity-robust state space for such infinite-mass kernels remains open in the present framework.

\Cref{sec:prelim} fixes the notation, records the positive-type identity, and recalls the structural no-go theorem.  \Cref{sec:existence} constructs the internal-variable state space and proves semigroup well-posedness.  \Cref{sec:weak} develops the encoded weak formulation and its identification with the mild solution.  \Cref{sec:certified} gives explicit graph-space stability, \cref{sec:vanishing} proves the vanishing-coercivity limit, and \cref{sec:discretisation} records the consequences for certified discretisation.

\section{Notation and preliminaries}\label{sec:prelim}
Let $A_{0},A_{1}\in L^{\infty}(\Omega)^{d\times d}$ be symmetric matrix fields and define
\begin{align*}
\displaystyle
  a_{i}(w,v):=\int_{\Omega}A_{i}(x)\nabla w\cdot\nabla v\diff x, \quad i\in\{0,1\}.
\end{align*}
We recall $\mathsf{A}_{i}\colon V\to V'$ for the spatial operator induced by the form, $\dual{\mathsf{A}_{i}w}{v}=a_{i}(w,v)$ ($i=0,1$); thus $\mathsf{A}_{i}$ is generated by the coefficient field $A_{i}$, which we keep distinct in notation. When $d=1$, the coefficient fields $A_{i}$ are scalar functions, $L^{2}(\Omega)^{d}$ is identified with $L^{2}(\Omega)$, and $\nabla u$ is simply $u'$; all matrix products below then reduce to ordinary multiplication. We assume throughout that the forms $a_{0},a_{1}$ are bounded and that the coefficient field $A_{1}$ is symmetric and pointwise positive semidefinite for almost every $x\in\Omega$, written $A_{1}\succeq0$; explicitly, $A_{1}(x)=A_{1}(x)^{\top}$ and $\xi^{\top}A_{1}(x)\,\xi \geq 0$ for every $\xi\in\R^{d}$ and almost every $x\in\Omega$. Because the matrix square root is a continuous function on symmetric positive-semidefinite matrices, this defines a symmetric, bounded, measurable field $A_{1}^{1/2}\in L^{\infty}(\Omega)^{d\times d}$ with $\left(A_{1}^{1/2}(x)\right)^{2}=A_{1}(x)$ for almost every $x$; in particular
\begin{align*}
\displaystyle
a_{1}(v,v)=\int_{\Omega}A_{1}\nabla v\cdot\nabla v\diff x =\int_{\Omega}\left\lvert A_{1}^{1/2}\nabla v\right\rvert^{2}\diff x \geq 0 \quad \forall v\in V,
\end{align*}
so the non-negativity of $a_{1}$ is a consequence of $A_{1}\succeq0$ rather than a separate hypothesis. We do \emph{not} assume coercivity of $a_{0}$ unless it is stated explicitly. The associated energy field of a function $u\colon(0,\Tend]\to V$ is
\begin{align}\label{eq:Wfield}
\displaystyle
W(t):=A_{1}^{1/2}\nabla u(t)\in\Lp, \quad a_{1}\left(u(s),u(t)\right)=\left(W(s),W(t)\right),
\end{align}
where $(\cdot,\cdot)$ and $\norm{\cdot}$ denote, here and below, the inner product and norm of $\Lp$; the second identity in \cref{eq:Wfield} is the polarisation of $a_{1}(v,v)=\norm{A_{1}^{1/2}\nabla v}^{2}$ and uses the symmetry of $A_{1}^{1/2}$. In particular $a_{1}(u(t),u(t))=\norm{W(t)}^{2}$.

\subsection{Completely monotone kernels}
We restrict attention to \emph{locally integrable} completely monotone kernels: completely monotone densities $k$ with $k\in L^{1}(0,\Tend)$ for every $\Tend>0$, equivalently with representing measure $\nu$ satisfying $\int_{[1,\infty)}\lambda^{-1}\diff\nu(\lambda)<\infty$. The associated measure is $\diff\mu(s)=k(s)\diff s$ on $(0,\Tend]$. This class contains the fractional kernels $t^{-\alpha}/\Gamma(1-\alpha)$, $\alpha\in(0,1)$, while excluding non-integrable completely monotone densities such as $k(t)=1/t$. Here, $\Gamma$ denotes the Gamma function, $\Gamma(z) := \int_0^{\infty} t^{z-1} e^{-t} \diff t$ for $z > 0$. We assume throughout that the kernel is non-trivial, $k\not\equiv0$; equivalently, its representing measure satisfies $\nu\neq0$.

\begin{definition}[Completely monotone kernel]\label{def:cm}
A function $k\colon(0,\infty)\to[0,\infty)$ is \emph{completely monotone} if it is of class $C^{\infty}$ and $(-1)^{n}k^{(n)}(t) \geq 0$ for all $t>0$ and $n\in\N_{0}$. By Bernstein's theorem \cite[Thm.~1.4]{SchillingSongVondracek2012}, this holds if and only if there exists a non-negative Borel measure $\nu$ on $[0,\infty)$ such that
\begin{align}\label{eq:bernstein}
\displaystyle
k(t)=\int_{[0,\infty)} e^{-\lambda t}\diff\nu(\lambda), \quad t>0.
\end{align}
We call $\nu$ the representing measure of $k$ in Bernstein's theorem. The total mass of the kernel is $\norm{k}_{L^{1}(0,\infty)}=\int_{0}^{\infty}k(t)\diff t =\int_{[0,\infty)}\lambda^{-1}\diff\nu(\lambda)\in(0,\infty]$, by Tonelli's theorem.
\end{definition}

\begin{example}[Standard cases]\label{ex:kernels}
\leavevmode
\begin{itemize}[leftmargin=1.6em]
  \item \emph{Exponential (single relaxation time):} $k(t)=\gamma e^{-\gamma t}$ with $\gamma>0$. Then, $\nu=\gamma\,\delta_{\gamma}$ and $\norm{k}_{L^{1}(0,\infty)}=1$.
  \item \emph{Fractional:} $k(t)=t^{-\alpha}/\Gamma(1-\alpha)$ with $\alpha\in(0,1)$. Then, $\diff\nu_{\alpha}(\lambda) =\tfrac{\sin(\pi\alpha)}{\pi}\lambda^{\alpha-1}\diff\lambda$ and
        $\norm{k}_{L^{1}(0,\infty)}=\infty$; see, e.g., \cite{GorenfloKilbasMainardiRogosin2014} for the fractional-calculus background.
\end{itemize}
\end{example}

\begin{example}[Prony kernel / finite relaxation spectrum]\label{ex:prony}
Let $k(t)=\sum_{j=1}^{J}c_{j}e^{-\gamma_{j}t}$ with $c_{j}>0$ and distinct $\gamma_{j}>0$. Then, $k$ is completely monotone with representing measure $\nu=\sum_{j=1}^{J}c_{j}\delta_{\gamma_{j}}$, and
\begin{align*}
M_{0}
&=
\sum_{j=1}^{J}c_{j}
=
k(0^{+}),
&
M_{1}
&=
\sum_{j=1}^{J}c_{j}\gamma_{j}
=
-k'(0^{+}),
\\
\norm{k}_{L^{1}(0,\infty)}
&=
\sum_{j=1}^{J}\frac{c_{j}}{\gamma_{j}}.
\end{align*}
Both Bernstein moments $M_{0}$ and $M_{1}$ are finite.  Hence the semigroup result of \cref{thm:graphwp}, the weak-solution identification of \cref{thm:exist}, and the explicit stability bounds of \cref{cor:cert} all apply.  Notice that the zeroth Bernstein moment $M_{0}=\sum_{j}c_{j}$ and the time integral $\norm{k}_{L^{1}(0,\infty)}=\sum_{j}c_{j}/\gamma_{j}$ are different quantities; the present graph-space theory is governed by the former.
\end{example}

Because $k$ is locally integrable, $\mu$ is finite on $(0,\Tend]$: $\mu((0,\Tend])=\int_{0}^{\Tend}k(s)\diff s<\infty$. We work with zero prehistory, so that $u$ is extended by zero to negative times, and the memory operator is
\begin{align}\label{eq:Kmu}
\displaystyle
\dual{(\Km u)(t)}{v}:=\int_{0}^{t}k(t-s)\,a_{1}\left(u(s),v\right)\diff s,
  \quad v\in V,
\end{align}
and the cumulative memory dissipation is
\begin{subequations} \label{eq:Dmu}
\begin{align}
\displaystyle
\Dm[u](\Tend) &:=\int_{0}^{\Tend}\dual{(\Km u)(t)}{u(t)}\diff t
  =\int_{0}^{\Tend}\left((k*W)(t),\,W(t)\right)\diff t, \label{eq:Dmu=a} \\
 (k*W)(t) &:=\int_{0}^{t}k(t-s)W(s)\diff s, \label{eq:Dmu=b}
\end{align}
\end{subequations}
where the second equality uses \cref{eq:Wfield}.

\subsection{Positive-type dissipation and the structural obstruction}
For a function $u$ with energy field $W=A_{1}^{1/2}\nabla u\in L^{2}(0,\Tend;\Lp)$, define
\begin{align*}
Z_{W}(\lambda,t)
:=
\int_{0}^{t}e^{-\lambda(t-s)}W(s)\,\diff s.
\end{align*}
Then $\partial_{t}Z_{W}+\lambda Z_{W}=W$ and $Z_{W}(\lambda,0)=0$.  The Bernstein representation and the one-mode energy identity give the following formula.

\begin{lemma}[Positive-type identity]\label{lem:positive-type}
For every locally integrable completely monotone kernel and every admissible $u$,
\begin{align}\label{eq:positive-type}
\Dm[u](\Tend)
=
\int_{[0,\infty)}
\left[
\frac12\norm{Z_{W}(\lambda,\Tend)}_{\Lp}^{2}
+
\lambda\int_{0}^{\Tend}\norm{Z_{W}(\lambda,t)}_{\Lp}^{2}\,\diff t
\right]\diff\nu(\lambda)
\geq0.
\end{align}
\end{lemma}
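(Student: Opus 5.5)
The plan is to insert the Bernstein representation \cref{eq:bernstein} into the convolution $k*W$, exchange the $\lambda$- and $s$-integrations, and thereby reduce $\Dm[u](\Tend)$ to a $\nu$-superposition of one-mode quantities. For each fixed $\lambda\ge0$ the inner integral $\int_{0}^{t}e^{-\lambda(t-s)}W(s)\,\diff s$ is exactly $Z_{W}(\lambda,t)$. This gives formally
\begin{align*}
\Dm[u](\Tend)=\int_{0}^{\Tend}\int_{[0,\infty)}\left(Z_{W}(\lambda,t),W(t)\right)\diff\nu(\lambda)\,\diff t
=\int_{[0,\infty)}\int_{0}^{\Tend}\left(Z_{W}(\lambda,t),W(t)\right)\diff t\,\diff\nu(\lambda).
\end{align*}

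Next I will evaluate the one-mode term. Since $W\in L^{2}(0,\Tend;\Lp)$, the function $Z_{W}(\lambda,\cdot)$ is absolutely continuous with values in $\Lp$. It satisfies $\partial_{t}Z_{W}=W-\lambda Z_{W}$ and $Z_{W}(\lambda,0)=0$. Substituting $W=\partial_{t}Z_{W}+\lambda Z_{W}$ and using $\tfrac{\diff}{\diff t}\tfrac12\norm{Z_{W}}^{2}=(\partial_{t}Z_{W},Z_{W})$ for $H^{1}(0,\Tend;\Lp)$ functions gives
\[
\int_{0}^{\Tend}(Z_{W},W)\,\diff t=\tfrac12\norm{Z_{W}(\lambda,\Tend)}^{2}+\lambda\int_{0}^{\Tend}\norm{Z_{W}(\lambda,t)}^{2}\diff t .
\]
This is non-negative because $\lambda\ge0$. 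Integrating it against $\nu$ then gives \cref{eq:positive-type} together with the sign $\geq0$.

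The main obstacle is justifying the two interchanges of integration, namely Fubini for $(\lambda,s,t)$, when $\nu$ may have infinite mass, as in the fractional case. The plan is to check absolute integrability through the bound
\[
\int_{0}^{\Tend}\!\int_{0}^{t}\!\int e^{-\lambda(t-s)}\diff\nu\,\norm{W(s)}\norm{W(t)}\,\diff s\,\diff t
=\int_{0}^{\Tend}\!\int_{0}^{t}k(t-s)\norm{W(s)}\norm{W(t)}\,\diff s\,\diff t .
\]
By Young's inequality this is at most $\norm{k}_{L^{1}(0,\Tend)}\norm{W}_{L^{2}(0,\Tend;\Lp)}^{2}<\infty$, using local integrability of $k$. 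The equality in the last display is Tonelli applied to non-negative integrands.

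Absolute integrability then licenses Fubini for the vector-valued integrand, so the exchange is valid with each $\lambda$-slice finite $\nu$-a.e. Since each one-mode value is non-negative, the final $\nu$-integral is well defined in $[0,\infty]$ and, by the above, equals the finite number $\Dm[u](\Tend)$. Here ``admissible'' is read as $W=A_{1}^{1/2}\nabla u\in L^{2}(0,\Tend;\Lp)$.
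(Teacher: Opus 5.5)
Your proof is correct, but it justifies the interchange of integrals differently from the paper. The paper first replaces $\nu$ by its restriction $\nu_R=\nu|_{[0,R]}$. The truncated kernel $k_R$ is then bounded by $\nu([0,R])$, so Fubini and the one-mode identity $W=\partial_tZ_W+\lambda Z_W$ apply at once. It then lets $R\to\infty$: on the left it uses $k_R\to k$ in $L^{1}(0,\Tend)$ together with Young's inequality, and on the right monotone convergence of the non-negative one-mode energies.

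You instead apply Tonelli--Fubini once on $[0,\infty)\times\{0<s<t<\Tend\}$, with the domination $\int_0^\Tend\int_0^t k(t-s)\norm{W(s)}\norm{W(t)}\,\diff s\,\diff t\leq\norm{k}_{L^1(0,\Tend)}\norm{W}^2_{L^2(0,\Tend;\Lp)}$. This is legitimate because $\nu$ is $\sigma$-finite, since $\nu([0,R])\leq e^{R}k(1)$. It is worth saying so explicitly, because $\sigma$-finiteness is what Tonelli needs when $\nu$ has infinite mass.

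Both arguments use local integrability of $k$ and Young's inequality at the same point: you use them to dominate, the paper to pass to the limit. Your route is shorter and shows directly that the $\nu$-integrand is in $L^{1}(\nu)$. The paper's route never needs joint integrability on an infinite-mass product space: it reduces everything to the finite-mass case, and monotone convergence absorbs the possibly infinite mass of $\nu$.
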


\begin{proof}
For the restriction $\nu_{R}=\nu|_{[0,R]}$, Fubini's theorem and
$W=\partial_{t}Z_{W}+\lambda Z_{W}$ yield
\begin{align*}
\int_{0}^{\Tend}((k_{R}*W)(t),W(t))\,\diff t
=
\int_{[0,R]}
\left[
\frac12\norm{Z_{W}(\lambda,\Tend)}_{\Lp}^{2}
+
\lambda\int_{0}^{\Tend}\norm{Z_{W}(\lambda,t)}_{\Lp}^{2}\,\diff t
\right]\diff\nu(\lambda).
\end{align*}
Since $k_{R}\uparrow k$ and $k_{R}\to k$ in $L^{1}(0,\Tend)$, Young's inequality passes the left-hand side to the limit, while monotone convergence applies on the right.
\end{proof}

The identity proves positivity, but it does not give a positive lower bound in the instantaneous energy norm.  The following result is the structural input that motivates the graph-space formulation.

\begin{theorem}[No instantaneous coercivity from memory]\label{thm:structural-nogo}
Let $\Tend>0$, let $k$ be a non-trivial locally integrable completely monotone kernel, and assume that $a_{1}\not\equiv0$.  Then
\begin{align}\label{eq:structural-nogo}
\inf_{u}
\frac{\Dm[u](\Tend)}
{\displaystyle\int_{0}^{\Tend}a_{1}(u(t),u(t))\,\diff t}
=0,
\end{align}
where the infimum is over all $u$ for which the denominator is positive.  Consequently, no $c>0$ can satisfy
\[
\Dm[u](\Tend)
\geq
c\int_{0}^{\Tend}a_{1}(u(t),u(t))\,\diff t
\]
for every admissible state.  If $a_{1}$ is coercive on $V$, there is likewise no $c>0$ such that
\[
\Dm[u](\Tend)
\geq
c\norm{u}_{L^{2}(0,\Tend;V)}^{2}
\]
for every admissible state.
\end{theorem}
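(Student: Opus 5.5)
We will prove \eqref{eq:structural-nogo} with highly oscillatory separable test functions. Since $a_{1}\not\equiv0$, there is $v\in V$ with $a_{1}(v,v)=\norm{A_{1}^{1/2}\nabla v}^{2}>0$. For $\omega>0$ set
\[
u_{\omega}(x,t):=\cos(\omega t)\,v(x),
\qquad\text{so that}\qquad
W_{\omega}(t)=\cos(\omega t)\,w,\quad w:=A_{1}^{1/2}\nabla v .
\]
By the second expression in \cref{eq:Dmu}, both numerator and denominator factor through $\norm{w}^{2}$:
\[
\Dm[u_{\omega}](\Tend)=\norm{w}^{2}\,I_{\omega},
\qquad
I_{\omega}:=\int_{0}^{\Tend}\int_{0}^{t}k(t-s)\cos(\omega s)\cos(\omega t)\,\diff s\,\diff t ,
\]
\[
\int_{0}^{\Tend}a_{1}(u_{\omega},u_{\omega})\,\diff t=\norm{w}^{2}\int_{0}^{\Tend}\cos^{2}(\omega t)\,\diff t\;\longrightarrow\;\tfrac{\Tend}{2}\norm{w}^{2}
\]
as $\omega\to\infty$. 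The denominator is positive for large $\omega$. It therefore suffices to show $I_{\omega}\to0$. Since $\Dm\ge0$ by \cref{lem:positive-type}, the infimum is then exactly $0$.

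\textbf{Decay of $I_{\omega}$ (the main step).} Substitute $r=t-s$ and apply Fubini, which is justified because $k\in L^{1}(0,\Tend)$:
\[
I_{\omega}=\int_{0}^{\Tend}k(r)\,g_{\omega}(r)\,\diff r,
\qquad
g_{\omega}(r):=\int_{r}^{\Tend}\cos(\omega(t-r))\cos(\omega t)\,\diff t .
\]
The product-to-sum formula gives
\[
g_{\omega}(r)=\tfrac12\int_{r}^{\Tend}\bigl[\cos(\omega r)+\cos(\omega(2t-r))\bigr]\diff t
=\tfrac12(\Tend-r)\cos(\omega r)+O(\omega^{-1}),
\]
where the error term is uniform in $r$. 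Hence
\[
I_{\omega}=\tfrac12\int_{0}^{\Tend}k(r)(\Tend-r)\cos(\omega r)\,\diff r+O\bigl(\omega^{-1}\norm{k}_{L^{1}(0,\Tend)}\bigr).
\]
The first term tends to $0$ by the Riemann--Lebesgue lemma, because $k(\cdot)(\Tend-\cdot)\in L^{1}(0,\Tend)$. This is the only place where local integrability is essential.

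The expected obstacle is justifying the Fubini and uniform-bound steps when $k$ is singular at $0$, as for fractional kernels. Because we only need $k\in L^{1}$ and bounded test functions, no moment condition on $\nu$ is needed. As a fallback, one can instead use \cref{lem:positive-type} with $Z_{W}(\lambda,t)=O(\min(\omega^{-1},\lambda^{-1}))$, together with dominated convergence in $\lambda$ against $\nu$. That route needs $\int\min(1,\lambda^{-1})\,\diff\nu<\infty$, which is exactly local integrability.

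\textbf{Consequences.} If some $c>0$ satisfied $\Dm[u]\ge c\int a_{1}(u,u)$ for all admissible $u$, applying it to $u_{\omega}$ would give $I_{\omega}\ge c\int_{0}^{\Tend}\cos^{2}(\omega t)\,\diff t\to c\Tend/2$. This contradicts $I_{\omega}\to0$.

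In the coercive case, boundedness of $a_{1}$ gives $\int a_{1}(u,u)\le C\norm{u}_{L^{2}(0,\Tend;V)}^{2}$. A bound $\Dm[u]\ge c\norm{u}^{2}_{L^{2}(V)}$ would therefore imply $\Dm[u]\ge (c/C)\int a_{1}(u,u)$, which was just excluded. Alternatively, evaluate directly on $u_{\omega}$: its $L^{2}(0,\Tend;V)$-norm stays bounded below while $\Dm[u_{\omega}]\to0$.
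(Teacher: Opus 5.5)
Your proof is correct, and it differs from the paper's in the key decay step. The paper uses the same test functions $\cos(\omega t)w_{0}$, but it estimates $\Dm[u_\omega](\Tend)$ through the Bernstein representation. It writes the scalar mode $z_\omega(\lambda,t)$ explicitly and bounds the one-mode energy in \cref{eq:positive-type} by $(3+2\Tend\lambda)/(\lambda^{2}+\omega^{2})$. Integrating against $\nu$ then gives $\Dm[u_\omega](\Tend)\leq c_{0}\bigl[3\nu([0,1))\omega^{-2}+(3+2\Tend)m(\omega)\bigr]$, where $m(\omega)=\int_{[0,\infty)}\lambda(\lambda^{2}+\omega^{2})^{-1}\diff\nu(\lambda)$. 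This tends to zero by a compact--tail dominated-convergence argument that uses $\nu([0,1))<\infty$ and $\int_{[1,\infty)}\lambda^{-1}\diff\nu<\infty$; it is essentially your fallback.

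Your main route never touches $\nu$. The substitution $r=t-s$ and the product-to-sum formula reduce everything to the Riemann--Lebesgue lemma for $k(r)(\Tend-r)$ on $(0,\Tend)$, plus a remainder bounded by $(2\omega)^{-1}\norm{k}_{L^{1}(0,\Tend)}$. This buys brevity and generality: the numerator vanishes for any $k\in L^{1}(0,\Tend)$, and complete monotonicity is used only for the sign $\Dm\geq0$ that pins the infimum at $0$.

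The paper's route buys consistency with the internal-variable framework used throughout. A single identity, \cref{eq:positive-type}, yields both the lower bound $0$ and an explicit majorant. The decay is expressed through $m(\omega)$, the boundary value of $\operatorname{Re}\hat k$ on the imaginary axis, which is the quantity later invoked in \cref{rem:twofaces}. Your cosine integral $\tfrac12\int_{0}^{\Tend}k(r)(\Tend-r)\cos(\omega r)\diff r$ is its finite-horizon analogue.

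Your handling of the two consequences is fine, including the reduction of the $L^{2}(0,\Tend;V)$ statement via the boundedness of $a_{1}$.
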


\begin{proof}
Choose $w_{0}\in V$ with $c_{0}:=a_{1}(w_{0},w_{0})>0$ and set
$u_{\omega}(t)=\cos(\omega t)w_{0}$.  For one relaxation mode, the scalar internal variable is
\[
z_{\omega}(\lambda,t)
=
\frac{\lambda\cos(\omega t)+\omega\sin(\omega t)-\lambda e^{-\lambda t}}
{\lambda^{2}+\omega^{2}}.
\]
A direct estimate gives
\begin{align*}
\frac12z_{\omega}(\lambda,\Tend)^{2}
+
\lambda\int_{0}^{\Tend}z_{\omega}(\lambda,t)^{2}\,\diff t
\leq
\frac{3+2\Tend\lambda}{\lambda^{2}+\omega^{2}}.
\end{align*}
Define
\[
m(\omega)
:=
\int_{[0,\infty)}\frac{\lambda}{\lambda^{2}+\omega^{2}}\,\diff\nu(\lambda).
\]
Local integrability of $k$ implies $\nu([0,1))<\infty$ and
$\int_{[1,\infty)}\lambda^{-1}\,\diff\nu(\lambda)<\infty$; a compact--tail decomposition therefore gives $m(\omega)\to0$ as $\omega\to\infty$.  Using \cref{eq:positive-type},
\begin{align*}
0
\leq
\Dm[u_{\omega}](\Tend)
\leq
c_{0}\left[
\frac{3\nu([0,1))}{\omega^{2}}
+(3+2\Tend)m(\omega)
\right]
\longrightarrow0.
\end{align*}
On the other hand,
\[
\int_{0}^{\Tend}a_{1}(u_{\omega},u_{\omega})\,\diff t
=
c_{0}\left(\frac{\Tend}{2}+\frac{\sin(2\omega\Tend)}{4\omega}\right)
\longrightarrow
\frac{c_{0}\Tend}{2}>0.
\]
The quotient therefore tends to zero.  The final assertion follows from the coercivity of $a_{1}$ and the same sequence.
\end{proof}

\begin{remark}[Relation with the companion coercivity paper]\label{rem:companion-structure}
\Cref{thm:structural-nogo} is the only part of the detailed coercivity theory needed here.  Exact gap identities, fixed-horizon thresholds, the coercivity-gap index, and singular limits of the kernel are developed separately in \cite{Ishizaka2026Coercivity}.  The present paper starts from the obstruction \cref{eq:structural-nogo} and constructs the state space in which well-posedness survives.
\end{remark}

\section{Graph-space semigroup formulation}\label{sec:existence}
\Cref{thm:structural-nogo} rules out a particular coercivity estimate; it does not imply that the degenerate problem is ill-posed. More precisely, the memory dissipation does not control the instantaneous norm $L^{2}(0,\Tend;V)$. Thus, when the coercivity of $a_0$ is lost, the standard energy space is no longer the natural space for the problem.

The appropriate replacement is obtained from the internal-variable representation. After adjoining the internal variables to the physical state $u$, the memory equation becomes an augmented first-order evolution equation. The corresponding generator is $m$-dissipative on the extended memory state space $\mathcal W$, and therefore yields existence, uniqueness, and Lipschitz continuous dependence on the data. The problem is thus well posed in the sense of Hadamard without using a positive coercivity constant for $a_0$.

Two moment conditions on the representing measure play different roles. The finiteness of
\begin{align*}
\displaystyle
M_0=\nu([0,\infty))=k(0^+)
\end{align*}
is sufficient for the graph-space well-posedness result of \cref{thm:graphwp}. The stronger condition
\begin{align*}
\displaystyle
M_1=\int_{[0,\infty)}\lambda\diff\nu(\lambda)<\infty
\end{align*}
provides additional regularity for the first-moment field and leads to the weak formulation and stability estimates developed below. Weakly singular fractional kernels have $M_0=\infty$ and therefore lie outside the present theory; see \cref{rem:fracopen}.

The following standing assumptions will be used throughout the remainder of the paper, with the moment conditions imposed separately.
\begin{itemize}[leftmargin=1.6em]
\item[(B1)]
The forms $a_0$ and $a_1$ are bounded on $V\times V$. Furthermore,
\begin{align*}
\displaystyle
a_0(v,v)\geq0
\quad
\text{for any }v\in V,
\end{align*}
but no positive lower bound is assumed for $a_0$, whereas $a_1$ is coercive:
\begin{align*}
\displaystyle
a_1(v,v)\geq
\beta\seminorm{v}_{V}^{2},
\quad
\beta>0.
\end{align*}

\item[(B2)]
The kernel $k$ is completely monotone and non-zero, with representing measure $\nu$. We set
\begin{align*}
\displaystyle
M_0
:=
\nu([0,\infty))
=
k(0^+)
\in(0,\infty],
\end{align*}
and
\begin{align*}
\displaystyle
M_1
:=
\int_{[0,\infty)}
\lambda\diff\nu(\lambda)
=
-k'(0^+)
\in[0,\infty].
\end{align*}
Furthermore, $M_1>0$ if and only if $\nu((0,\infty))>0$; the purely constant kernel, $\nu=c\delta_0$ with $c>0$, has $M_1=0$ and is not excluded from the present section.

\item[(B3)]
The data satisfy
\begin{align*}
\displaystyle
f\in L^2(0,\Tend;H),
\quad
u_0\in H.
\end{align*}
\end{itemize}

The quantity $M_0$ is the total mass of the representing measure and should not be confused with the time integral of the kernel. Whenever $k\in L^1(0,\infty)$,
\begin{align*}
\displaystyle
\norm{k}_{L^1(0,\infty)}
=
\int_{(0,\infty)}
\frac{1}{\lambda}\diff\nu(\lambda)
=
m(0).
\end{align*}

The moment assumptions will be imposed separately. The semigroup theorem \cref{thm:graphwp} requires $M_0<\infty$. The weak-solution identification of \cref{thm:exist}, together with the additional $V$-regularity of the first-moment field, requires both
\begin{align*}
\displaystyle
M_0<\infty
\quad\text{and}\quad
M_1<\infty.
\end{align*}

The assumption $f\in L^2(0,\Tend;H)$ is stronger than the usual parabolic assumption $f\in L^2(0,\Tend;V')$. It is required because the degenerate theory does not initially provide $u\in L^2(0,\Tend;V)$, so the $V'$-$V$ duality pairing is not available in the basic energy estimate. The $H$-valued assumption allows the forcing term to be paired instead with $u\in L^\infty(0,\Tend;H)$.

\subsection{The internal-variable family and the memory potential}
The memory term is non-local in time. We localise it by decomposing the kernel into a continuum of exponentially relaxing modes. For the moment, let $u$ be sufficiently regular for the following Bochner integrals and changes in the order of integration to be justified.

For each relaxation rate $\lambda\geq0$, define the internal variable
\begin{align}\label{eq:zetafam}
\zeta(\lambda,t)
:=
\int_0^t
e^{-\lambda(t-s)}u(s)\diff s.
\end{align}
For fixed $\lambda$, this is the unique solution of
\begin{align}\label{eq:zeta-evolution}
\partial_t\zeta(\lambda,t)
+
\lambda\zeta(\lambda,t)
&=
u(t),
\\
\zeta(\lambda,0)
&=
0.
\end{align}
Thus, $\zeta(\lambda,\cdot)$ represents the response of a single relaxation mode with decay rate $\lambda$.

Using the integral representation
\begin{align*}
\displaystyle
k(r)
=
\int_{[0,\infty)}
e^{-\lambda r}\diff\nu(\lambda),
\end{align*}
together with Fubini's theorem, we obtain
\begin{align*}
\int_0^t
k(t-s)u(s)\diff s
&=
\int_0^t
\int_{[0,\infty)}
e^{-\lambda(t-s)}u(s)
\diff\nu(\lambda)\diff s
=
\int_{[0,\infty)}
\zeta(\lambda,t)\diff\nu(\lambda).
\end{align*}
This motivates the definition of the \emph{memory potential}
\begin{align}\label{eq:xidef}
\xi(t)
:=
\int_{[0,\infty)}
\zeta(\lambda,t)\diff\nu(\lambda)
=
(k*u)(t).
\end{align}
In particular, for any $v\in V$,
\begin{align}\label{eq:history-xi}
\int_0^t
k(t-s)a_1(u(s),v)\diff s
=
a_1(\xi(t),v).
\end{align}
Therefore, $\xi$ is precisely the aggregated memory field that enters the equation. Because any internal variable initially vanishes, one also has
\begin{align*}
\displaystyle
\xi(0)=0.
\end{align*}

We shall also use the \emph{first-moment field}
\begin{align}\label{eq:etadef}
\eta(t)
:=
\int_{[0,\infty)}
\lambda\zeta(\lambda,t)\diff\nu(\lambda),
\end{align}
whenever this integral is well defined. The fields $\xi$ and $\eta$ play different roles. $\xi$ appears directly in the memory term, whereas $\eta$ appears when the memory potential is differentiated. Indeed, assume that
\begin{align*}
\displaystyle
M_0
=
\nu([0,\infty))
<
\infty
\end{align*}
and that differentiation under the $\nu$-integral is justified. Using
\cref{eq:zeta-evolution}, we obtain
\begin{align}\label{eq:xidot}
\partial_t\xi(t)
&=
\int_{[0,\infty)}
\partial_t\zeta(\lambda,t)\diff\nu(\lambda)
=
\int_{[0,\infty)}
\left (u(t)-\lambda\zeta(\lambda,t)\right)
\diff\nu(\lambda)
=
M_0u(t)-\eta(t).
\end{align}
Because the memory is non-trivial, $M_0>0$. Consequently,
\begin{align}\label{eq:u-from-xi}
u(t)
=
\frac{1}{M_0}
\left(\partial_t\xi(t)+\eta(t)\right).
\end{align}

The original memory equation can therefore be written as the augmented system
\begin{align}\label{eq:augmented-system}
\partial_tu+\mathsf A_0u+\mathsf A_1\xi
&=
f,
\\
\partial_t\zeta(\lambda,\cdot)
+
\lambda\zeta(\lambda,\cdot)
&=
u,
\quad \lambda\geq0,
\nonumber\\
\xi
&=
\int_{[0,\infty)}
\zeta(\lambda,\cdot)\diff\nu(\lambda).
\nonumber
\end{align}
Although this system contains a continuum of internal variables, it is local in time on the enlarged state space. This local first-order formulation is the basis of the graph-space well-posedness theory below.

\subsection{Well-posedness in the extended memory space}
\label{sec:graphwp}
The no-go theorem does not exclude additional $L^{2}(0,\Tend;V)$-regularity of solutions. It shows instead that such regularity cannot be obtained from the memory dissipation through a frequency-uniform coercivity estimate. When the instantaneous form $a_0$ is only non-negative, the natural energy must therefore be formulated in terms of the internal variables.

Assume throughout this subsection that
\begin{align*}
\displaystyle
0<M_0=\nu([0,\infty))<\infty.
\end{align*}
Let $\mathcal K$ be the Hilbert space of strongly $\nu$-measurable functions $\zeta\colon[0,\infty)\to V$ such that
\begin{align*}
\displaystyle
\norm{\zeta}_{\mathcal K}
:=
\left( \int_{[0,\infty)}
a_1\left(\zeta(\lambda),\zeta(\lambda)\right)
\diff\nu(\lambda) \right)^{1/2}
<\infty.
\end{align*}
Because $a_1$ is bounded and coercive on $V$, this norm is equivalent to
\begin{align*}
\displaystyle
\left(
\int_{[0,\infty)}
\norm{\zeta(\lambda)}_V^2
\diff\nu(\lambda)
\right)^{1/2}.
\end{align*}
We introduce the extended memory space
\begin{align}\label{eq:graphspace}
\Wsp
:=
H\times\mathcal K,
\end{align}
equipped with the inner product
\begin{align*}
\dual{(u,\zeta)}{(v,\chi)}_{\Wsp}
:=
(u,v)_H
+
\int_{[0,\infty)}
a_1\left(\zeta(\lambda),\chi(\lambda)\right)
\diff\nu(\lambda).
\end{align*}
We call $\Wsp$ the \emph{extended memory state space}.

Two auxiliary objects make the trajectory description of zero-prehistory solutions precise.  Let $\mathcal H_{\nu}$ be the Hilbert space of strongly $\nu$-measurable functions $z\colon[0,\infty)\to H$ with
\begin{align*}
\inner{z}{w}_{\mathcal H_{\nu}}
:=
\int_{[0,\infty)}
(z(\lambda),w(\lambda))_H
\diff\nu(\lambda),
\qquad
\norm{z}_{\mathcal H_{\nu}}
:=
\inner{z}{z}_{\mathcal H_{\nu}}^{1/2}
<\infty,
\end{align*}
and, for $u\in C([0,\Tend];H)$, define the causal internal-variable map
\begin{align}\label{eq:causal-map}
(\mathscr Ru)(t)(\lambda)
:=
\int_0^t
e^{-\lambda(t-s)}u(s)\diff s,
\qquad
\lambda\geq0,
\quad
t\in[0,\Tend].
\end{align}

\begin{lemma}[Internal-variable image space and causal map]\label{lem:causal-map}
Assume \textnormal{(B1)} and $0<M_0=\nu([0,\infty))<\infty$, and let $C_P$ denote the Poincar\'e constant of $V=H_0^1(\Omega)$ in $H$, so that $\norm{v}_H\leq C_P\seminorm{v}_V$ for every $v\in V$.
\begin{enumerate}[label=\textnormal{(\roman*)}]
\item The space $\mathcal K$ is continuously embedded into $\mathcal H_{\nu}$:
\begin{align}\label{eq:K-Hnu-embedding}
\norm{z}_{\mathcal H_{\nu}}^{2}
\leq
\frac{C_P^{2}}{\beta}
\norm{z}_{\mathcal K}^{2}
\qquad
\text{for every }z\in\mathcal K.
\end{align}
\item For every $u\in C([0,\Tend];H)$, the trajectory $\mathscr Ru$ belongs to $C([0,\Tend];\mathcal H_{\nu})$, and
\begin{align}\label{eq:causal-bound}
\sup_{0\leq t\leq\Tend}
\norm{(\mathscr Ru)(t)}_{\mathcal H_{\nu}}
\leq
M_0^{1/2}\,\Tend
\norm{u}_{C([0,\Tend];H)}.
\end{align}
In particular, $\mathscr R\colon C([0,\Tend];H)\to C([0,\Tend];\mathcal H_{\nu})$ is linear and bounded.
\end{enumerate}
\end{lemma}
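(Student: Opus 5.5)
For (i), the plan is a pointwise bound in $\lambda$ followed by integration against $\nu$. Fix $z\in\mathcal K$. For $\nu$-a.e.\ $\lambda$ we have $z(\lambda)\in V$, so the Poincar\'e inequality and the coercivity in (B1) give
\begin{align*}
\norm{z(\lambda)}_H^{2}\leq C_P^{2}\seminorm{z(\lambda)}_V^{2}\leq \frac{C_P^{2}}{\beta}\,a_1\left(z(\lambda),z(\lambda)\right).
\end{align*}
Since $V\hookrightarrow H$ continuously, strong $\nu$-measurability as a $V$-valued map implies strong $\nu$-measurability as an $H$-valued map. Integrating the display against $\nu$ then yields \cref{eq:K-Hnu-embedding}. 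Linearity of the inclusion is clear, so $\mathcal K\hookrightarrow\mathcal H_\nu$ continuously.

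For (ii), I would first treat measurability and the pointwise bound. For fixed $t$, the map $\lambda\mapsto\int_0^t e^{-\lambda(t-s)}u(s)\diff s$ is continuous from $[0,\infty)$ into $H$, because the integrand is dominated by $\norm{u}_{C([0,\Tend];H)}$. Hence it is strongly measurable. Since $0\le e^{-\lambda(t-s)}\le1$ for $\lambda\ge0$ and $s\le t$, we get
\begin{align*}
\norm{(\mathscr Ru)(t)(\lambda)}_H\leq t\,\norm{u}_{C([0,\Tend];H)}\leq \Tend\,\norm{u}_{C([0,\Tend];H)}
\end{align*}
uniformly in $\lambda$. Squaring, integrating against $\nu$ and using $\nu([0,\infty))=M_0<\infty$ gives \cref{eq:causal-bound}. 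This is where the finite-mass assumption enters.

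Continuity in time is the step needing a little care, since the bound must be uniform over the whole $\nu$-support, including large $\lambda$. For $0\le t'<t\le\Tend$ I would write
\begin{align*}
(\mathscr Ru)(t)(\lambda)-(\mathscr Ru)(t')(\lambda)=\int_{t'}^{t}e^{-\lambda(t-s)}u(s)\diff s+\left(e^{-\lambda(t-t')}-1\right)\int_0^{t'}e^{-\lambda(t'-s)}u(s)\diff s.
\end{align*}
The first term has $H$-norm at most $(t-t')\norm{u}_{C}$, uniformly in $\lambda$.

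The second term is the main obstacle: $1-e^{-\lambda(t-t')}$ does not go to zero uniformly in $\lambda$. I would therefore not seek a uniform bound. Instead, note that this term is bounded by $2\Tend\norm{u}_{C}$ for every $\lambda$ and tends to $0$ pointwise as $t-t'\to0$ for each fixed $\lambda$. Dominated convergence in $L^2(\nu)$ applies, since the constant is $\nu$-integrable because $M_0<\infty$, and gives $\norm{(\mathscr Ru)(t)-(\mathscr Ru)(t')}_{\mathcal H_\nu}\to0$.

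Linearity of $\mathscr R$ is immediate, and boundedness is exactly \cref{eq:causal-bound}. Together these show that $\mathscr R\colon C([0,\Tend];H)\to C([0,\Tend];\mathcal H_\nu)$ is linear and bounded.
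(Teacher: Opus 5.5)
Your proposal is correct and takes the paper's route: for (i) you integrate the Poincar\'e and coercivity bound against $\nu$, and for (ii) you integrate a uniform-in-$\lambda$ bound over the finite measure $\nu$ and handle continuity by the same split of the time increment, with dominated convergence absorbing the non-uniform factor $1-e^{-\lambda h}$. Two small refinements go beyond the paper, which leaves both implicit: you check strong measurability via continuity of $\lambda\mapsto(\mathscr Ru)(t)(\lambda)$, and you treat a general pair $t'<t$, which gives left and right continuity at once.
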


\begin{proof}
(i) For $\nu$-almost every $\lambda$, the Poincar\'e inequality and the coercivity of $a_1$ give
\begin{align*}
\norm{z(\lambda)}_H^{2}
\leq
C_P^{2}\seminorm{z(\lambda)}_V^{2}
\leq
\frac{C_P^{2}}{\beta}
a_1(z(\lambda),z(\lambda)).
\end{align*}
Integration with respect to $\nu$ proves \eqref{eq:K-Hnu-embedding}.

(ii) For every $\lambda\geq0$ and every $t\in[0,\Tend]$,
\begin{align}\label{eq:causal-mode-bound}
\norm{(\mathscr Ru)(t)(\lambda)}_H
\leq
\int_0^te^{-\lambda(t-s)}\diff s\,
\norm{u}_{C([0,\Tend];H)}
\leq
\min\{t,\lambda^{-1}\}
\norm{u}_{C([0,\Tend];H)}.
\end{align}
Because $\nu$ is finite, squaring and integrating with respect to $\nu$ gives \eqref{eq:causal-bound}.  For the continuity in time, let $0\leq t\leq t+h\leq\Tend$.  Splitting the time integral at $s=t$ yields
\begin{align*}
(\mathscr Ru)(t+h)(\lambda)
-
(\mathscr Ru)(t)(\lambda)
=
\bigl(e^{-\lambda h}-1\bigr)
(\mathscr Ru)(t)(\lambda)
+
\int_t^{t+h}
e^{-\lambda(t+h-s)}u(s)\diff s.
\end{align*}
The last term has $H$-norm at most $h\norm{u}_{C([0,\Tend];H)}$ uniformly in $\lambda$, so its $\mathcal H_{\nu}$-norm is at most $M_0^{1/2}h\norm{u}_{C([0,\Tend];H)}$.  By \eqref{eq:causal-mode-bound}, the first term is bounded in $H$ by $\Tend\norm{u}_{C([0,\Tend];H)}$, which is square-integrable with respect to the finite measure $\nu$, and it tends to zero in $H$ for every $\lambda$ as $h\downarrow0$; the dominated convergence theorem therefore gives convergence to zero in $\mathcal H_{\nu}$.  Reading the same decomposition from the smaller of two times gives left-continuity.  Linearity is clear, and boundedness is \eqref{eq:causal-bound}.
\end{proof}

For the fixed time horizon $\Tend$, the zero-prehistory \emph{memory graph space} is
\begin{align}\label{eq:trajectory-graph}
\mathfrak G_{\nu}(0,\Tend)
:=
\Bigl\{
(u,\zeta)\in C([0,\Tend];\Wsp):
\zeta(t)
=
(\mathscr Ru)(t)
\ \text{in }\mathcal H_{\nu}
\text{ for every $t\in[0,\Tend]$}
\Bigr\},
\end{align}
with norm
\begin{align}\label{eq:trajectory-graph-norm}
\norm{(u,\zeta)}_{\mathfrak G_{\nu}(0,\Tend)}
:=
\sup_{0\leq t\leq\Tend}
\norm{(u(t),\zeta(t))}_{\Wsp}.
\end{align}
Membership in $\mathfrak G_{\nu}(0,\Tend)$ states that, for every $t\in[0,\Tend]$,
\begin{align*}
\zeta(\lambda,t)
=
\int_0^te^{-\lambda(t-s)}u(s)\diff s
\quad
\text{in }H
\quad
\text{for $\nu$-a.e. }\lambda.
\end{align*}
Because $\mathcal K$ is continuously embedded into $\mathcal H_{\nu}$ and $\mathscr R$ is bounded, by \cref{lem:causal-map}, the constraint $\zeta(t)=(\mathscr Ru)(t)$ is preserved under convergence in $C([0,\Tend];\Wsp)$; hence $\mathfrak G_{\nu}(0,\Tend)$ is a closed subspace of $C([0,\Tend];\Wsp)$ and, in particular, a Banach space with the norm \cref{eq:trajectory-graph-norm}.  Thus, $\mathfrak G_{\nu}(0,\Tend)$ is the graph of the causal internal-variable map inside the trajectory space $C([0,\Tend];\Wsp)$.  The term \emph{memory graph norm} refers to \cref{eq:trajectory-graph-norm}; it does not refer to the operator graph norm $\norm{X}_{\Wsp}+\norm{\mathcal AX}_{\Wsp}$ of the generator introduced below.
We define the aggregation operator as
\begin{align*}
\displaystyle
J\colon\mathcal K\to V,
\quad
J\zeta
:=
\int_{[0,\infty)}
\zeta(\lambda)\diff\nu(\lambda),
\end{align*}
and the constant embedding
\begin{align*}
\displaystyle
E\colon V\to\mathcal K,
\quad
(Eu)(\lambda):=u.
\end{align*}
Both operators are bounded. Indeed,
\begin{align}\label{eq:Jbound}
a_1(J\zeta,J\zeta)^{1/2}
&\leq
\int_{[0,\infty)}
a_1(\zeta(\lambda),\zeta(\lambda))^{1/2}
\diff\nu(\lambda)
\nonumber\\
&\leq
M_0^{1/2}\norm{\zeta}_{\mathcal K},
\end{align}
and
\begin{align}\label{eq:Ebound}
\norm{Eu}_{\mathcal K}^{2}
=
M_0a_1(u,u).
\end{align}
Furthermore,
\begin{align}\label{eq:adjoint-coupling}
a_1(J\zeta,u)
=
\int_{[0,\infty)}
a_1(\zeta(\lambda),u)\diff\nu(\lambda)
=
\dual{\zeta}{Eu}_{\mathcal K}.
\end{align}
Identity \eqref{eq:adjoint-coupling} is the source of the cancellation in the energy estimate.

Let
\begin{align*}
\displaystyle
(\Lambda\zeta)(\lambda)
:=
\lambda\zeta(\lambda).
\end{align*}
We define the augmented operator as
\begin{align}\label{eq:genA}
\mathcal A(u,\zeta)
:=
\left(
-\mathsf A_0u-\mathsf A_1J\zeta,\,
Eu-\Lambda\zeta
\right)
\end{align}
on the domain
\begin{align}\label{eq:genAdomain}
D(\mathcal A)
:=
\left\{
(u,\zeta)\in V\times\mathcal K:
\mathsf A_0u+\mathsf A_1J\zeta\in H,\ 
Eu-\Lambda\zeta\in\mathcal K
\right\}.
\end{align}
Here, the condition $\mathsf A_0u+\mathsf A_1J\zeta\in H$ is understood after identifying $H$ with a subspace of $V'$. The augmented memory equation is then the abstract Cauchy problem
\begin{align}\label{eq:abstract-cauchy}
X'(t)
=
\mathcal AX(t)+(f(t),0),
\quad
X(0)=(u_0,0),
\end{align}
where
\begin{align*}
\displaystyle
X(t)=(u(t),\zeta(t)).
\end{align*}

\begin{theorem}[Well-posedness in the extended memory space]
\label{thm:graphwp}
Assume \textnormal{(B1)} and
\begin{align*}
\displaystyle
0<M_0=\nu([0,\infty))<\infty.
\end{align*}
Then, the operator $\mathcal A$ defined by \cref{eq:genA,eq:genAdomain} is densely defined and $m$-dissipative on $\Wsp$. More precisely,
\begin{align}\label{eq:dissip}
\dual{\mathcal A(u,\zeta)}{(u,\zeta)}_{\Wsp}
=
-a_0(u,u)
-
\int_{[0,\infty)}
\lambda
a_1(\zeta(\lambda),\zeta(\lambda))
\diff\nu(\lambda)
\leq0
\end{align}
for any $(u,\zeta)\in D(\mathcal A)$, and
\begin{align*}
\displaystyle
\operatorname{Ran}(I-\mathcal A)=\Wsp.
\end{align*}
Consequently, $\mathcal A$ generates a $C_0$-semigroup of contractions $(S(t))_{t\geq0}$ on $\Wsp$. For any
\begin{align*}
\displaystyle
u_0\in H,
\quad
f\in L^1(0,\Tend;H),
\end{align*}
the problem \eqref{eq:abstract-cauchy} has a unique mild solution
\begin{align*}
\displaystyle
X=(u,\zeta)\in \mathcal{C}([0,\Tend];\Wsp),
\end{align*}
given by
\begin{align}\label{eq:variation}
X(t)
=
S(t)(u_0,0)
+
\int_0^t
S(t-s)(f(s),0)\diff s.
\end{align}
It satisfies
\begin{align}\label{eq:contraction}
\norm{X(t)}_{\Wsp}
\leq
\norm{u_0}_H
+
\int_0^t\norm{f(s)}_H\diff s.
\end{align}
In particular, the solution depends Lipschitz-continuously on the data. The estimate contains no positive lower bound for $a_0$.
\end{theorem}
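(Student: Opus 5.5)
The plan is to apply the Lumer--Phillips theorem on the Hilbert space $\Wsp$. This requires three ingredients: the dissipativity identity \eqref{eq:dissip}, the range condition $\operatorname{Ran}(I-\mathcal A)=\Wsp$, and density of $D(\mathcal A)$. The rest follows from standard semigroup theory. The mild solution is defined by \eqref{eq:variation} and is unique in $C([0,\Tend];\Wsp)$. The bound \eqref{eq:contraction} follows from $\norm{S(t)}\le1$, since $\norm{(u_0,0)}_{\Wsp}=\norm{u_0}_H$ and $\norm{(f(s),0)}_{\Wsp}=\norm{f(s)}_H$. Lipschitz dependence follows by applying \eqref{eq:contraction} to differences, using linearity.

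\emph{Dissipativity.} Take $(u,\zeta)\in D(\mathcal A)$. Then $u\in V$ and $\mathsf A_0u+\mathsf A_1J\zeta\in H$, so the $H$-inner product with $u$ coincides with the $V'$--$V$ pairing. This gives
\[
(-\mathsf A_0u-\mathsf A_1J\zeta,u)_H=-a_0(u,u)-a_1(J\zeta,u).
\]
Since $Eu-\Lambda\zeta\in\mathcal K$ and $Eu\in\mathcal K$ by \eqref{eq:Ebound}, we also have $\Lambda\zeta\in\mathcal K$. The $\mathcal K$-part of the inner product is therefore
\[
\dual{Eu}{\zeta}_{\mathcal K}-\int\lambda\,a_1(\zeta(\lambda),\zeta(\lambda))\,\diff\nu(\lambda).
\]
By the adjoint relation \eqref{eq:adjoint-coupling}, the two cross terms cancel, which yields \eqref{eq:dissip}. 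The right-hand side is nonpositive because $a_0\ge0$ and $\lambda\ge0$.

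\emph{Range condition.} This is the main step. Given $(g,\chi)\in H\times\mathcal K$, we must solve
\[
u+\mathsf A_0u+\mathsf A_1J\zeta=g,\qquad \zeta-Eu+\Lambda\zeta=\chi.
\]
Eliminate $\zeta$ via $\zeta(\lambda)=(1+\lambda)^{-1}(u+\chi(\lambda))$; note that $(1+\lambda)^{-1}\le1$, so this lies in $\mathcal K$ whenever $u\in V$. Substituting gives
\[
J\zeta=c\,u+J\bigl((1+\Lambda)^{-1}\chi\bigr),\qquad c:=\int(1+\lambda)^{-1}\,\diff\nu\in(0,M_0].
\]
The problem then reduces to finding $u\in V$ with
\[
(u,v)_H+a_0(u,v)+c\,a_1(u,v)=(g,v)_H-a_1\bigl(J((1+\Lambda)^{-1}\chi),v\bigr)\qquad\text{for all } v\in V.
\]
The left-hand form is bounded and coercive on $V$, because $a_0\ge0$ and $c\,a_1\ge c\beta\seminorm{\cdot}_V^2$. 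The right-hand side is a bounded functional by \eqref{eq:Jbound}. Lax--Milgram therefore gives a unique $u$. Coercivity here comes from $a_1$ through $c>0$, not from $a_0$, which is exactly the point of the theorem. We then check that the resulting pair lies in $D(\mathcal A)$. First, $\mathsf A_0u+\mathsf A_1J\zeta=g-u\in H$. Second, $Eu-\Lambda\zeta=\zeta-\chi\in\mathcal K$. So the pair is in the domain.

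\emph{Density.} In a Hilbert space, an $m$-dissipative operator is automatically densely defined; this is a standard fact for reflexive spaces. Invoking it is simpler than constructing an explicit approximation. The Lumer--Phillips theorem then gives the contraction semigroup.

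The main obstacle is the range step. In particular, one must check that $\lambda\mapsto(1+\lambda)^{-1}(u+\chi(\lambda))$ is strongly $\nu$-measurable. This holds as a product of a measurable scalar function with a measurable $V$-valued function. One must also confirm that $J$ commutes with $a_1$ as in \eqref{eq:adjoint-coupling}, which is a Bochner integral argument using $M_0<\infty$.
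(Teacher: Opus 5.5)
Your proposal is correct and follows the paper's proof essentially step by step. You use the same cross-term cancellation via \eqref{eq:adjoint-coupling} for dissipativity, the same elimination $\zeta(\lambda)=(1+\lambda)^{-1}(u+\chi(\lambda))$ reducing the range condition to a Lax--Milgram problem whose coercivity comes from $c_\nu a_1$ rather than from $a_0$, and the same Lumer--Phillips and contraction-bound conclusion. The only cosmetic difference is that you cite the standard fact that $m$-dissipative operators on reflexive spaces are densely defined, whereas the paper proves it directly by the orthogonality argument: if $Y\perp D(\mathcal A)$, write $Y=(I-\mathcal A)X$ and test with $X$.
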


\begin{proof}
We divide the proof into four steps.

\medskip
\noindent
\emph{Step 1: dissipativity.}
Let
\begin{align*}
\displaystyle
X=(u,\zeta)\in D(\mathcal A).
\end{align*}
By the definition of the inner product on $\Wsp$,
\begin{align*}
\dual{\mathcal AX}{X}_{\Wsp}
&=
\left(
-\mathsf A_0u-\mathsf A_1J\zeta,u
\right)_H
+
\dual{Eu-\Lambda\zeta}{\zeta}_{\mathcal K}.
\end{align*}
The first term is
\begin{align*}
\displaystyle
\left(
-\mathsf A_0u-\mathsf A_1J\zeta,u
\right)_H
=
-a_0(u,u)-a_1(J\zeta,u).
\end{align*}
For the second term, we have
\begin{align*}
\dual{Eu-\Lambda\zeta}{\zeta}_{\mathcal K}
&=
\int_{[0,\infty)}
a_1(u,\zeta(\lambda))
\diff\nu(\lambda)
\\
&\quad
-
\int_{[0,\infty)}
\lambda
a_1(\zeta(\lambda),\zeta(\lambda))
\diff\nu(\lambda).
\end{align*}
From symmetry of $a_1$ and \eqref{eq:adjoint-coupling},
\begin{align*}
\displaystyle
a_1(J\zeta,u)
=
\int_{[0,\infty)}
a_1(u,\zeta(\lambda))
\diff\nu(\lambda).
\end{align*}
The two coupling terms therefore cancel exactly. Therefore
\begin{align}\label{eq:dissip-identity}
\dual{\mathcal AX}{X}_{\Wsp}
=
-a_0(u,u)
-
\int_{[0,\infty)}
\lambda
a_1(\zeta(\lambda),\zeta(\lambda))
\diff\nu(\lambda).
\end{align}
Because $a_0(u,u)\geq0$ and $\lambda\geq0$, this proves \eqref{eq:dissip}.

\medskip
\noindent
\emph{Step 2: surjectivity of $I-\mathcal A$.}
Let
\begin{align*}
\displaystyle
(F,G)\in\Wsp
\end{align*}
be arbitrary. We seek $(u,\zeta)\in D(\mathcal A)$ satisfying
\begin{align*}
\displaystyle
(I-\mathcal A)(u,\zeta)=(F,G).
\end{align*}
The two components of this equation are
\begin{align}
u+\mathsf A_0u+\mathsf A_1J\zeta
&=
F,
\label{eq:resolvent-first}
\\
(1+\lambda)\zeta(\lambda)-u
&=
G(\lambda).
\label{eq:resolvent-second}
\end{align}
From \eqref{eq:resolvent-second},
\begin{align}\label{eq:zeta-resolvent}
\zeta(\lambda)
=
\frac{u+G(\lambda)}{1+\lambda}.
\end{align}
We set
\begin{align}\label{eq:cg-def}
c_\nu
:=
\int_{[0,\infty)}
\frac{1}{1+\lambda}
\diff\nu(\lambda),
\quad
h
:=
\int_{[0,\infty)}
\frac{G(\lambda)}{1+\lambda}
\diff\nu(\lambda).
\end{align}
Because the memory is non-zero and $M_0<\infty$,
\begin{align*}
\displaystyle
0<c_\nu\leq M_0.
\end{align*}
Furthermore, by \eqref{eq:Jbound},
\begin{align*}
a_1(h,h)^{1/2}
&\leq
M_0^{1/2}
\left(
\int_{[0,\infty)}
\frac{a_1(G(\lambda),G(\lambda))}
{(1+\lambda)^2}
\diff\nu(\lambda)
\right)^{1/2}
\leq
M_0^{1/2}\norm{G}_{\mathcal K}.
\end{align*}
Thus, $h\in V$. Integrating \eqref{eq:zeta-resolvent} with respect to $\nu$, we obtain
\begin{align}\label{eq:Jzeta-resolvent}
J\zeta
=
c_\nu u+h.
\end{align}
Substituting this identity into \eqref{eq:resolvent-first} gives
\begin{align*}
\displaystyle
u+\mathsf A_0u+c_\nu\mathsf A_1u
=
F-\mathsf A_1h
\end{align*}
in $V'$. Equivalently, $u\in V$ must satisfy
\begin{align}\label{eq:LM-resolvent}
(u,v)_H
+
a_0(u,v)
+
c_\nu\,a_1(u,v)
=
(F,v)_H-a_1(h,v)
\quad
\text{for any }v\in V.
\end{align}
The bilinear form
\begin{align*}
\displaystyle
b(u,v)
:=
(u,v)_H+a_0(u,v)+c_\nu\,a_1(u,v)
\end{align*}
is bounded on $V\times V$. Furthermore,
\begin{align*}
\displaystyle
b(v,v)
\geq
\norm{v}_H^2+c_\nu\beta\seminorm{v}_V^2
\geq
\min\{1,c_\nu\beta\}\norm{v}_V^2,
\end{align*}
where we use $\norm{v}_V^2=\norm{v}_H^2+\seminorm{v}_V^2$. Therefore, $b$ is coercive on $V$, and the Lax--Milgram theorem yields a unique $u\in V$ satisfying \eqref{eq:LM-resolvent}. We define
\begin{align*}
\displaystyle
\zeta(\lambda)
:=
\frac{u+G(\lambda)}{1+\lambda}.
\end{align*}
We set
\begin{align*}
\displaystyle
\zeta_u(\lambda):=\frac{u}{1+\lambda},
\quad
\zeta_G(\lambda):=\frac{G(\lambda)}{1+\lambda}.
\end{align*}
Then,
\begin{align*}
\displaystyle
\zeta=\zeta_u+\zeta_G \in\mathcal K,
\end{align*}
and therefore
\begin{align*}
\displaystyle
\norm{\zeta}_{\mathcal K}
\leq
\norm{\zeta_u}_{\mathcal K}
+
\norm{\zeta_G}_{\mathcal K}.
\end{align*}
Because $(1+\lambda)^{-2}\leq1$,
\begin{align*}
\norm{\zeta_u}_{\mathcal K}^{2}
&=
a_1(u,u)
\int_{[0,\infty)}
\frac{1}{(1+\lambda)^2}
\diff\nu(\lambda)
\leq
M_0a_1(u,u),
\end{align*}
whereas
\begin{align*}
\norm{\zeta_G}_{\mathcal K}^{2}
&=
\int_{[0,\infty)}
\frac{a_1(G(\lambda),G(\lambda))}
{(1+\lambda)^2}
\diff\nu(\lambda)
\leq
\norm{G}_{\mathcal K}^{2}.
\end{align*}
Therefore,
\begin{align*}
\displaystyle
\norm{\zeta}_{\mathcal K}
\leq
M_0^{1/2}a_1(u,u)^{1/2}
+
\norm{G}_{\mathcal K},
\end{align*}
and therefore $\zeta\in\mathcal K$. It remains to verify that $(u,\zeta)\in D(\mathcal A)$. From
\eqref{eq:resolvent-second},
\begin{align*}
\displaystyle
u-\lambda\zeta(\lambda)
=
\zeta(\lambda)-G(\lambda).
\end{align*}
Because both $\zeta$ and $G$ belong to $\mathcal K$, it follows that
\begin{align*}
\displaystyle
Eu-\Lambda\zeta\in\mathcal K.
\end{align*}
Furthermore, \eqref{eq:resolvent-first} gives
\begin{align*}
\displaystyle
\mathsf A_0u+\mathsf A_1J\zeta
=
F-u\in H.
\end{align*}
Thus, $(u,\zeta)\in D(\mathcal A)$, and
\begin{align*}
\displaystyle
\operatorname{Ran}(I-\mathcal A)=\Wsp.
\end{align*}

\medskip
\noindent
\emph{Step 3: density of the domain.}
Let $Y\in\Wsp$ satisfy
\begin{align*}
\displaystyle
\dual{Y}{X}_{\Wsp}=0
\quad
\text{for any }X\in D(\mathcal A).
\end{align*}
Because $I-\mathcal A$ is onto, there exists $X\in D(\mathcal A)$ such that
\begin{align*}
\displaystyle
(I-\mathcal A)X=Y.
\end{align*}
Because $Y$ is orthogonal to $D(\mathcal A)$, in particular, it is orthogonal to this $X$. Therefore,
\begin{align*}
0
=
\dual{Y}{X}_{\Wsp}
&=
\dual{(I-\mathcal A)X}{X}_{\Wsp}
=
\norm{X}_{\Wsp}^{2}
-
\dual{\mathcal AX}{X}_{\Wsp}.
\end{align*}
By dissipativity,
\begin{align*}
\displaystyle
-\dual{\mathcal AX}{X}_{\Wsp}\geq0,
\end{align*}
which leads to
\begin{align*}
\displaystyle
0\geq\norm{X}_{\Wsp}^{2},
\end{align*}
so $X=0$, and consequently $Y=0$. Thus,
\begin{align*}
\displaystyle
\overline{D(\mathcal A)}^{\,\Wsp}=\Wsp.
\end{align*}

\medskip
\noindent
\emph{Step 4: generation and continuous dependence.}
Steps~1--3 show that $\mathcal A$ is densely defined, dissipative, and satisfies
\begin{align*}
\displaystyle
\operatorname{Ran}(I-\mathcal A)=\Wsp.
\end{align*}
The Lumer--Phillips theorem \cite[Chap.~1, Thm.~4.3]{Pazy1983} (see also \cite{Brezis1973}) therefore implies that $\mathcal A$ generates a $C_0$-semigroup of contractions $(S(t))_{t\geq0}$ on $\Wsp$. For $u_0\in H$ and $f\in L^1(0,\Tend;H)$, the variation-of-constants formula gives
\begin{align*}
\displaystyle
X(t)
=
S(t)(u_0,0)
+
\int_0^t
S(t-s)(f(s),0)\diff s.
\end{align*}
Using
\begin{align*}
\displaystyle
\norm{S(t)}_{\mathcal L(\Wsp)}\leq1,
\end{align*}
we obtain
\begin{align*}
\norm{X(t)}_{\Wsp}
&\leq
\norm{(u_0,0)}_{\Wsp}
+
\int_0^t
\norm{(f(s),0)}_{\Wsp}\diff s
=
\norm{u_0}_H
+
\int_0^t\norm{f(s)}_H\diff s.
\end{align*}
This proves \eqref{eq:contraction}. Applying the same estimate to the difference of two solutions gives
\begin{align*}
\norm{X_1-X_2}_{C([0,\Tend];\Wsp)}
\leq
\norm{u_{0,1}-u_{0,2}}_H
+
\norm{f_1-f_2}_{L^1(0,\Tend;H)}.
\end{align*}
Thus, the solution depends Lipschitz-continuously on the initial datum and the forcing term. No positive coercivity constant for $a_0$ has been used.
\end{proof}

The mild solution of \cref{thm:graphwp} is obtained by an abstract limit procedure, so its internal component is not, a priori, given mode by mode by the causal formula \eqref{eq:zetafam}.  The next lemma closes this gap; it is the rigorous form of the variation-of-constants formula for the internal-variable equation and is used repeatedly below.

\begin{lemma}[Modewise internal-variable representation]\label{lem:modewise}
Assume \textnormal{(B1)} and $0<M_0=\nu([0,\infty))<\infty$.  For $t\geq0$, let $e^{-t\Lambda}$ denote the multiplication operator $(e^{-t\Lambda}z)(\lambda):=e^{-\lambda t}z(\lambda)$, which is a contraction on $\mathcal K$ and on $\mathcal H_{\nu}$.
\begin{enumerate}[label=\textnormal{(\roman*)}]
\item Let $X=(u,\zeta)$ be a strong solution of \eqref{eq:abstract-cauchy} on $[0,\Tend]$ with $f\in C([0,\Tend];H)$; that is, $X\in C^1([0,\Tend];\Wsp)$ satisfies $X(t)\in D(\mathcal A)$ and $X'(t)=\mathcal AX(t)+(f(t),0)$ for every $t\in[0,\Tend]$, possibly with a non-zero initial memory component $\zeta(0)$.  Then, for every $t\in[0,\Tend]$,
\begin{align}\label{eq:modewise-strong}
\zeta(t)
=
e^{-t\Lambda}\zeta(0)
+
(\mathscr Ru)(t)
\quad
\text{in }\mathcal H_{\nu}.
\end{align}
\item Let $X=(u,\zeta)$ be the mild solution \eqref{eq:variation} with data $u_0\in H$ and $f\in L^1(0,\Tend;H)$.  Then, for every $t\in[0,\Tend]$,
\begin{align}\label{eq:modewise-mild}
\zeta(t)
=
(\mathscr Ru)(t)
\quad
\text{in }\mathcal H_{\nu};
\end{align}
equivalently, for every $t\in[0,\Tend]$,
\begin{align*}
\zeta(\lambda,t)
=
\int_0^te^{-\lambda(t-s)}u(s)\diff s
\quad
\text{in }H
\quad
\text{for $\nu$-a.e. }\lambda.
\end{align*}
\end{enumerate}
\end{lemma}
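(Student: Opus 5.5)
For part (i), I will work in $\mathcal H_{\nu}$ rather than $\mathcal K$. The second component of the strong-solution identity gives $\zeta'(t)=Eu(t)-\Lambda\zeta(t)$ in $\mathcal K$ for every $t$. Here $\zeta\in C^{1}([0,\Tend];\mathcal K)$ and $Eu-\Lambda\zeta\in\mathcal K$. By \cref{lem:causal-map}(i), the embedding $\mathcal K\hookrightarrow\mathcal H_{\nu}$ is continuous, so the same identity holds in $\mathcal H_{\nu}$ with $\zeta\in C^{1}([0,\Tend];\mathcal H_{\nu})$.

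Since $u\in C([0,\Tend];H)$ and $\nu$ is finite, $Eu\in C([0,\Tend];\mathcal H_{\nu})$, with $\norm{Eu(t)-Eu(s)}_{\mathcal H_{\nu}}=M_0^{1/2}\norm{u(t)-u(s)}_H$. Thus the term $\Lambda\zeta(t)=Eu(t)-\zeta'(t)$ is also continuous in $\mathcal H_{\nu}$. Although $\Lambda$ is unbounded, it is a self-adjoint, non-negative multiplication operator on $\mathcal H_{\nu}$, and $e^{-t\Lambda}$ is its contraction semigroup. I will then apply the standard uniqueness argument. Fix $t$ and set $g(s):=e^{-(t-s)\Lambda}\zeta(s)$ for $s\in[0,t]$. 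Differentiating in $\mathcal H_{\nu}$ gives $g'(s)=e^{-(t-s)\Lambda}(\Lambda\zeta(s)+\zeta'(s))=e^{-(t-s)\Lambda}Eu(s)$.

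To justify this derivative, I will split the difference quotient into two parts. The first is $e^{-(t-s-h)\Lambda}(\zeta(s+h)-\zeta(s))/h$, which converges by uniform boundedness of the semigroup and strong continuity. The second is $\bigl(e^{-(t-s-h)\Lambda}-e^{-(t-s)\Lambda}\bigr)\zeta(s)/h$, which converges because $\zeta(s)\in D(\Lambda)$ in $\mathcal H_{\nu}$; this holds since $\Lambda\zeta(s)\in\mathcal K\subset\mathcal H_{\nu}$. Integrating from $0$ to $t$ gives
\begin{align*}
\zeta(t)=e^{-t\Lambda}\zeta(0)+\int_0^te^{-(t-s)\Lambda}Eu(s)\diff s
\end{align*}
as an $\mathcal H_{\nu}$-valued Bochner integral. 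It remains to identify the integral with $(\mathscr Ru)(t)$. For this I will pair it with a test element $w\in\mathcal H_{\nu}$ and use Fubini's theorem; the integrand $e^{-\lambda(t-s)}(u(s),w(\lambda))_H$ is integrable on $[0,t]\times[0,\infty)$ because $\nu$ is finite. This shows that the integral equals $\lambda\mapsto\int_0^te^{-\lambda(t-s)}u(s)\diff s$ for $\nu$-a.e. $\lambda$.

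For part (ii), I will use density. First choose $u_{0}^{n}$ and $f_n\in C^{1}([0,\Tend];H)$ such that $(u_0^n,0)$ generates a strong solution. The natural choice is to regularise the whole initial state: take $X_0^n\in D(\mathcal A)$ with $X_0^n\to(u_0,0)$ in $\Wsp$, for instance $X_0^n=n(n-\mathcal A)^{-1}(u_0,0)$. Its memory component $\zeta_0^n$ is then non-zero but tends to $0$ in $\mathcal K$, which is exactly why part (i) allows $\zeta(0)\neq0$. Next take $f_n\in C^{1}([0,\Tend];H)$ with $f_n\to f$ in $L^{1}(0,\Tend;H)$. By the standard result (Pazy, Chap.~4, Cor.~2.5), the variation-of-constants formula with data $(X_0^n,f_n)$ gives a classical (strong) solution $X_n=(u_n,\zeta_n)$.

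Part (i) then gives $\zeta_n(t)=e^{-t\Lambda}\zeta_0^n+(\mathscr Ru_n)(t)$ in $\mathcal H_{\nu}$. By the contraction estimate behind \cref{eq:contraction}, applied to differences, $X_n\to X$ in $C([0,\Tend];\Wsp)$. Hence $u_n\to u$ in $C([0,\Tend];H)$, and $\zeta_n\to\zeta$ in $C([0,\Tend];\mathcal K)$, and therefore also in $C([0,\Tend];\mathcal H_{\nu})$ by \eqref{eq:K-Hnu-embedding}. The boundedness of $\mathscr R$ in \eqref{eq:causal-bound} gives $\mathscr Ru_n\to\mathscr Ru$. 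Finally, $\norm{e^{-t\Lambda}\zeta_0^n}_{\mathcal H_{\nu}}\le (C_P^2/\beta)^{1/2}\norm{\zeta_0^n}_{\mathcal K}\to0$. Passing to the limit yields \eqref{eq:modewise-mild}, and the pointwise form follows by taking an a.e. representative.

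The main obstacle is the justification in part (i): the differentiation of $e^{-(t-s)\Lambda}\zeta(s)$ despite $\Lambda$ being unbounded, and the domain membership $\zeta(s)\in D(\Lambda)$ in $\mathcal H_{\nu}$. Both hinge on $Eu-\Lambda\zeta\in\mathcal K$ from the definition of $D(\mathcal A)$. If that route becomes awkward, I would instead argue modewise. For $\nu$-a.e. $\lambda$, obtain the scalar-in-$\lambda$ ODE $\partial_t\zeta(\lambda,t)+\lambda\zeta(\lambda,t)=u(t)$ in $H$, by extracting it from the $C^{1}$ identity through integration against test functions $\phi(\lambda)w$. Then solve that ODE explicitly and use separability to handle the null sets.
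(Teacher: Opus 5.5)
Your proof is correct. Part (ii) is essentially the paper's argument: strong solutions with $X_{0,n}\in D(\mathcal A)$, so that $\zeta_n(0)\neq0$ but $\zeta_n(0)\to0$ in $\mathcal K$, followed by a uniform limit in $\mathcal H_{\nu}$ via \eqref{eq:K-Hnu-embedding} and \eqref{eq:causal-bound}. For part (i) you take a genuinely different route.

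\emph{Your route.} You argue in strong form. You treat $-\Lambda$ as the generator of the multiplication semigroup $e^{-r\Lambda}$ on $\mathcal H_{\nu}$ and verify $\zeta(s)\in D(\Lambda)$ from $\Lambda\zeta(s)=Eu(s)-\zeta'(s)$. Differentiating $s\mapsto e^{-(t-s)\Lambda}\zeta(s)$ gives a Duhamel formula as an $\mathcal H_{\nu}$-valued Bochner integral. You then identify this integral with $(\mathscr Ru)(t)$ by Fubini against arbitrary $w\in\mathcal H_{\nu}$.

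\emph{The paper's route.} The paper argues by duality. It puts the exponential weight on test elements $\psi_t(\lambda)=\varphi(\lambda)e^{-\lambda(t_0-t)}v$, with $\varphi$ bounded and compactly supported. Then $t\mapsto\psi_t$ is trivially $C^1$ in $\mathcal H_{\nu}$, and no semigroup or domain theory for the unbounded $\Lambda$ is needed. The weighted terms in $g'(t)$ cancel by the same pointwise identity $\zeta'(t)(\lambda)=u(t)-\lambda\zeta(\lambda,t)$ that underlies your domain check.

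\emph{Trade-off.} The paper pays with a separation step at the end: indicators $\mathbf 1_{B\cap[0,n]}$ and a countable dense subset of $H$ are needed to conclude equality in $\mathcal H_{\nu}$. You avoid this because you test against all of $\mathcal H_{\nu}$. In exchange, you rely on standard facts about multiplication semigroups and on the derivative of $r\mapsto e^{-r\Lambda}z$ for $z\in D(\Lambda)$. Your fallback modewise plan is close in spirit to the paper's proof.

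\emph{Minor point.} Pazy's corollary yields classical solutions in his sense on $[0,\Tend)$. Part (i) asks for $X_n\in C^1([0,\Tend];\Wsp)$ on the closed interval. This does hold for $X_{0,n}\in D(\mathcal A)$ and $f_n\in C^1$, for instance by writing the convolution as $\int_0^tS(s)(f_n(t-s),0)\diff s$ or by extending $f_n$ past $\Tend$.
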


\begin{proof}
\emph{Step 1: strong solutions.}
Fix $t_0\in(0,\Tend]$, $v\in H$, and a bounded Borel function $\varphi\colon[0,\infty)\to\R$ with compact support.  Define $\psi_t\in\mathcal H_{\nu}$ by
\begin{align*}
\psi_t(\lambda)
:=
\varphi(\lambda)e^{-\lambda(t_0-t)}v,
\qquad
t\in[0,t_0],
\end{align*}
and
\begin{align*}
g(t)
:=
\inner{\zeta(t)}{\psi_t}_{\mathcal H_{\nu}}
=
\int_{[0,\infty)}
\varphi(\lambda)e^{-\lambda(t_0-t)}
(\zeta(\lambda,t),v)_H
\diff\nu(\lambda).
\end{align*}
Because $\varphi$ is bounded with compact support and $\nu$ is finite, the map $t\mapsto\psi_t$ is continuously differentiable in $\mathcal H_{\nu}$ with $(\partial_t\psi_t)(\lambda)=\lambda\varphi(\lambda)e^{-\lambda(t_0-t)}v$; indeed, the difference quotients of the weight converge uniformly on the compact support of $\varphi$.  Because $X$ is a strong solution, $\zeta\in C^1([0,t_0];\mathcal K)$, and $\mathcal K$ is continuously embedded into $\mathcal H_{\nu}$ by \cref{lem:causal-map}.  The product rule for the inner product of $\mathcal H_{\nu}$ therefore gives $g\in C^1([0,t_0])$ with
\begin{align*}
g'(t)
=
\inner{\zeta'(t)}{\psi_t}_{\mathcal H_{\nu}}
+
\inner{\zeta(t)}{\partial_t\psi_t}_{\mathcal H_{\nu}}.
\end{align*}
The second component of \eqref{eq:abstract-cauchy} reads $\zeta'(t)=Eu(t)-\Lambda\zeta(t)$ in $\mathcal K$; here $u(t)\in V$ and $\Lambda\zeta(t)=Eu(t)-\zeta'(t)\in\mathcal K$ because $X(t)\in D(\mathcal A)$.  Hence, for every $t$ and $\nu$-almost every $\lambda$, $\zeta'(t)(\lambda)=u(t)-\lambda\zeta(\lambda,t)$, and
\begin{align*}
\inner{\zeta'(t)}{\psi_t}_{\mathcal H_{\nu}}
=
(u(t),v)_H
\int_{[0,\infty)}
\varphi(\lambda)e^{-\lambda(t_0-t)}
\diff\nu(\lambda)
-
\inner{\zeta(t)}{\partial_t\psi_t}_{\mathcal H_{\nu}}.
\end{align*}
The weighted terms cancel, and
\begin{align*}
g'(t)
=
(u(t),v)_H\,
\kappa_\varphi(t_0-t),
\qquad
\kappa_\varphi(r)
:=
\int_{[0,\infty)}
\varphi(\lambda)e^{-\lambda r}
\diff\nu(\lambda).
\end{align*}
Integrating over $[0,t_0]$ and interchanging the order of integration by Fubini's theorem, which is justified because the integrand is bounded on the product of $[0,t_0]$ and the compact support of $\varphi$ and $\nu$ is finite, we obtain
\begin{align*}
g(t_0)-g(0)
=
\int_0^{t_0}
(u(s),v)_H\,
\kappa_\varphi(t_0-s)
\diff s
=
\int_{[0,\infty)}
\varphi(\lambda)
\bigl(
(\mathscr Ru)(t_0)(\lambda),v
\bigr)_H
\diff\nu(\lambda).
\end{align*}
Since
\begin{align*}
g(t_0)
=
\int_{[0,\infty)}
\varphi(\lambda)
(\zeta(\lambda,t_0),v)_H
\diff\nu(\lambda),
\qquad
g(0)
=
\int_{[0,\infty)}
\varphi(\lambda)e^{-\lambda t_0}
(\zeta(\lambda,0),v)_H
\diff\nu(\lambda),
\end{align*}
the element
\begin{align*}
w
:=
\zeta(t_0)
-
e^{-t_0\Lambda}\zeta(0)
-
(\mathscr Ru)(t_0)
\in
\mathcal H_{\nu}
\end{align*}
satisfies $\int_{[0,\infty)}\varphi(\lambda)(w(\lambda),v)_H\diff\nu(\lambda)=0$ for every $v\in H$ and every bounded Borel $\varphi$ with compact support; note that $\lambda\mapsto(w(\lambda),v)_H$ is $\nu$-integrable because $w\in\mathcal H_{\nu}$ and $\nu$ is finite.  Choosing $\varphi=\mathbf 1_{B\cap[0,n]}$ for Borel sets $B\subset[0,\infty)$ and $n\in\N$ shows that $(w(\lambda),v)_H=0$ for $\nu$-almost every $\lambda$; letting $v$ run through a countable dense subset of $H$ gives $w=0$ in $\mathcal H_{\nu}$.  Since $t_0\in(0,\Tend]$ was arbitrary and \eqref{eq:modewise-strong} is trivial at $t=0$, this proves (i).

\medskip
\noindent
\emph{Step 2: mild solutions.}
Because $\mathcal A$ is $m$-dissipative and densely defined, there exist
\begin{align*}
X_{0,n}\in D(\mathcal A),
\qquad
f_n\in C^1([0,\Tend];H),
\end{align*}
such that
\begin{align*}
X_{0,n}\longrightarrow(u_0,0)
\quad\text{in }\Wsp,
\qquad
f_n\longrightarrow f
\quad\text{in }L^1(0,\Tend;H),
\end{align*}
and the corresponding strong solutions $X_n=(u_n,\zeta_n)$ converge to $X$ in $C([0,\Tend];\Wsp)$ by the contraction estimate \eqref{eq:contraction}.  By Step~1, for every $t\in[0,\Tend]$,
\begin{align*}
\zeta_n(t)
=
e^{-t\Lambda}\zeta_n(0)
+
(\mathscr Ru_n)(t)
\quad
\text{in }\mathcal H_{\nu}.
\end{align*}
We pass to the limit in $\mathcal H_{\nu}$, uniformly in $t$.  First, by \eqref{eq:K-Hnu-embedding},
\begin{align*}
\sup_{0\leq t\leq\Tend}
\norm{e^{-t\Lambda}\zeta_n(0)}_{\mathcal H_{\nu}}
\leq
\norm{\zeta_n(0)}_{\mathcal H_{\nu}}
\leq
\frac{C_P}{\beta^{1/2}}
\norm{\zeta_n(0)}_{\mathcal K}
\longrightarrow0.
\end{align*}
Secondly, \eqref{eq:causal-bound} gives
\begin{align*}
\sup_{0\leq t\leq\Tend}
\norm{(\mathscr Ru_n)(t)-(\mathscr Ru)(t)}_{\mathcal H_{\nu}}
\leq
M_0^{1/2}\,\Tend
\norm{u_n-u}_{C([0,\Tend];H)}
\longrightarrow0.
\end{align*}
Finally, $\zeta_n\to\zeta$ in $C([0,\Tend];\mathcal K)$, which is continuously embedded into $C([0,\Tend];\mathcal H_{\nu})$.  Hence \eqref{eq:modewise-mild} holds for every $t\in[0,\Tend]$.  The modewise restatement is the definition of equality in $\mathcal H_{\nu}$.
\end{proof}

\begin{corollary}[Memory potential of the mild solution]
\label{cor:graphwp}
Under the assumptions of \cref{thm:graphwp}, the abstract Cauchy problem has a unique mild solution
\begin{align*}
\displaystyle
X=(u,\zeta)\in \mathcal{C}([0,\Tend];\Wsp).
\end{align*}
Its memory potential
\begin{align*}
\displaystyle
\xi(t)
:=
J\zeta(t)
=
\int_{[0,\infty)}
\zeta(\lambda,t)\diff\nu(\lambda)
\end{align*}
belongs to
\begin{align*}
\displaystyle
\xi\in \mathcal{C}([0,\Tend];V)
\end{align*}
and satisfies
\begin{align}\label{eq:xi-graph-bound}
a_1(\xi(t),\xi(t))^{1/2}
\leq
M_0^{1/2}\norm{\zeta(t)}_{\mathcal K}.
\end{align}
By \cref{lem:modewise}(ii), for every $t\in[0,\Tend]$,
\begin{align*}
\zeta(\lambda,t)
=
\int_0^t
e^{-\lambda(t-s)}u(s)\diff s
\quad
\text{in }H
\quad
\text{for $\nu$-a.e. }\lambda.
\end{align*}
Integrating with respect to $\nu$ and interchanging the order of integration by Fubini's theorem, which is justified because $0\leq k\leq M_0$ and $u\in\mathcal C([0,\Tend];H)$, we obtain
\begin{align*}
\xi(t)
=
J\zeta(t)
=
\int_0^t k(t-s)u(s)\diff s
=
(k*u)(t),
\end{align*}
so that
\begin{align*}
\xi=k*u
\quad
\text{in }\mathcal C([0,\Tend];H).
\end{align*}
Uniqueness holds in $\mathcal{C}([0,\Tend];\Wsp)$ and does not require the additional assumption
\begin{align*}
\displaystyle
u\in L^2(0,\Tend;V).
\end{align*}
The stability estimate \eqref{eq:contraction} is independent of any positive coercivity constant for $a_0$.
\end{corollary}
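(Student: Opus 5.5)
The corollary bundles several assertions. Existence, uniqueness and the stability estimate come from \cref{thm:graphwp}; the rest concerns the memory potential. I will treat them in the following order.

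First, existence and uniqueness in $C([0,\Tend];\Wsp)$ are exactly the mild-solution statement of \cref{thm:graphwp}. Uniqueness of mild solutions is automatic, because the variation-of-constants formula \eqref{eq:variation} defines $X$ uniquely once the generator is fixed. No assumption $u\in L^2(0,\Tend;V)$ enters, since the semigroup lives on $\Wsp=H\times\mathcal K$. Independence from any coercivity constant for $a_0$ is inherited from \eqref{eq:contraction}.

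Second, for the regularity $\xi\in C([0,\Tend];V)$ I will use that $J\colon\mathcal K\to V$ is bounded by \eqref{eq:Jbound}. Since $\zeta\in C([0,\Tend];\mathcal K)$ as a component of $X$, the composition $\xi=J\zeta$ is continuous into $V$, with the norm on $V$ being equivalent to $a_1(\cdot,\cdot)^{1/2}$ by coercivity and boundedness of $a_1$ in (B1). The pointwise bound \eqref{eq:xi-graph-bound} is then \eqref{eq:Jbound} applied at each fixed $t$.

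Third, for the identification $\xi=k*u$ I will invoke \cref{lem:modewise}(ii) to get $\zeta(\lambda,t)=\int_0^te^{-\lambda(t-s)}u(s)\diff s$ in $H$ for $\nu$-a.e.\ $\lambda$. The $V$-valued Bochner integral $J\zeta(t)$ agrees with the $H$-valued $\nu$-integral of the same function, by continuity of the embedding $V\hookrightarrow H$, which commutes with Bochner integration. Next I apply Fubini's theorem on $[0,\infty)\times[0,t]$ to $(\lambda,s)\mapsto e^{-\lambda(t-s)}u(s)$. The integrand is measurable and has $H$-norm at most $\norm{u}_{C([0,\Tend];H)}$, so it is integrable against the finite product measure $\nu\otimes\diff s$, whose total mass is $M_0t$. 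Swapping the order gives $\int_0^t\bigl(\int e^{-\lambda(t-s)}\diff\nu\bigr)u(s)\diff s=\int_0^tk(t-s)u(s)\diff s$, using the Bernstein formula \eqref{eq:bernstein}. Here $0\le k\le M_0$ is bounded, so $k*u$ is well defined. Continuity of $k*u$ in $H$ follows either from $\xi\in C([0,\Tend];V)$ or directly from dominated convergence.

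The only mildly delicate point is the measurability needed for Fubini: the joint measurability of $(\lambda,s)\mapsto e^{-\lambda(t-s)}u(s)$ with values in $H$. I expect this to be the main obstacle, though a minor one. I would handle it by noting that the map is continuous in $s$ and in $\lambda$ separately, so it is a Carathéodory function into a separable Hilbert space and hence jointly strongly measurable.
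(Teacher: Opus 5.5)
Your proposal is correct and follows the paper's argument. Existence, uniqueness and the coercivity-free bound are inherited from \cref{thm:graphwp}; $\xi\in\mathcal C([0,\Tend];V)$ and \eqref{eq:xi-graph-bound} follow from the boundedness of $J$ in \eqref{eq:Jbound}; and $\xi=k*u$ follows from \cref{lem:modewise}(ii) plus Fubini, justified by the finite mass $M_0$ and the continuity of $u$ in $H$. Your remarks on joint measurability of $(\lambda,s)\mapsto e^{-\lambda(t-s)}u(s)$, which is in fact jointly continuous, and on identifying the $V$- and $H$-valued Bochner integrals only make explicit what the paper leaves implicit.
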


\begin{corollary}[Hadamard well-posedness in the memory graph space]\label{cor:hadamard}
Under the assumptions of \cref{thm:graphwp}, and for zero initial memory, the solution pair belongs to $\mathfrak G_{\nu}(0,\Tend)$.  The data-to-solution map
\[
H\times L^{1}(0,\Tend;H)
\longrightarrow
\mathfrak G_{\nu}(0,\Tend),
\qquad
(u_{0},f)\longmapsto(u,\zeta),
\]
is Lipschitz continuous.  More precisely,
\begin{align}\label{eq:graph-hadamard}
\norm{(u,\zeta)}_{\mathfrak G_{\nu}(0,\Tend)}
\leq
\norm{u_{0}}_{H}
+
\norm{f}_{L^{1}(0,\Tend;H)},
\end{align}
and, for two data sets,
\begin{align*}
\norm{(u_{1}-u_{2},\zeta_{1}-\zeta_{2})}_{\mathfrak G_{\nu}(0,\Tend)}
\leq
\norm{u_{0,1}-u_{0,2}}_{H}
+
\norm{f_{1}-f_{2}}_{L^{1}(0,\Tend;H)}.
\end{align*}
Hence the degenerate problem is well posed in the sense of Hadamard in the memory graph space.
\end{corollary}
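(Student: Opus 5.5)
The plan is to assemble \cref{cor:hadamard} from \cref{thm:graphwp}, \cref{lem:modewise}(ii) and the linearity of the solution map. No new estimates are needed.

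\emph{Step 1: membership in the graph space.} Take data $u_0\in H$ and $f\in L^1(0,\Tend;H)$. \Cref{thm:graphwp} gives a unique mild solution $X=(u,\zeta)\in C([0,\Tend];\Wsp)$, defined by \eqref{eq:variation}. Since $\Wsp=H\times\mathcal K$, we have $u\in C([0,\Tend];H)$, so $\mathscr Ru$ is defined, and by \cref{lem:causal-map}(ii) it lies in $C([0,\Tend];\mathcal H_\nu)$. \Cref{lem:modewise}(ii) states that $\zeta(t)=(\mathscr Ru)(t)$ in $\mathcal H_\nu$ for every $t\in[0,\Tend]$. This is exactly the defining constraint of $\mathfrak G_\nu(0,\Tend)$ in \eqref{eq:trajectory-graph}, so $(u,\zeta)\in\mathfrak G_\nu(0,\Tend)$. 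The hypothesis of zero initial memory is what removes the term $e^{-t\Lambda}\zeta(0)$ appearing in \eqref{eq:modewise-strong}. \Cref{lem:modewise}(ii) absorbs this through its approximation argument, in which $\zeta_n(0)\to0$ in $\mathcal K$.

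\emph{Step 2: the bound.} The norm \eqref{eq:trajectory-graph-norm} is the supremum over $t$ of $\norm{X(t)}_{\Wsp}$. Taking the supremum in \eqref{eq:contraction} over $t\in[0,\Tend]$, and using $\int_0^t\norm{f}_H\le\norm{f}_{L^1(0,\Tend;H)}$, gives \eqref{eq:graph-hadamard}.

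\emph{Step 3: Lipschitz dependence and Hadamard well-posedness.} The mild solution formula \eqref{eq:variation} is linear in $(u_0,f)$. So for two data sets the difference $(u_1-u_2,\zeta_1-\zeta_2)$ is the mild solution with data $(u_{0,1}-u_{0,2},f_1-f_2)$, again with zero initial memory. Applying Steps~1 and~2 to this difference gives the stated Lipschitz bound, and hence continuity of the data-to-solution map into the Banach space $\mathfrak G_\nu(0,\Tend)$.

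Existence comes from Step~1 and continuous dependence from Step~3. Uniqueness within $\mathfrak G_\nu(0,\Tend)\subset C([0,\Tend];\Wsp)$ is inherited from the uniqueness of mild solutions in \cref{thm:graphwp} and \cref{cor:graphwp}. Together these are the three Hadamard requirements.

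The only step with real content is Step~1, namely identifying the abstract semigroup component $\zeta$ with the causal map $\mathscr Ru$. That identification has already been carried out in \cref{lem:modewise}, so I expect no genuine obstacle. The one point to check is that the lemma is applied with the initial memory component equal to $0$, and that equality in $\mathcal H_\nu$ is the sense used in \eqref{eq:trajectory-graph}.
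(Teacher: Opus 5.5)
Your proposal is correct and takes essentially the same route as the paper. Membership in $\mathfrak G_{\nu}(0,\Tend)$ comes from \cref{lem:modewise}(ii), the bound \eqref{eq:graph-hadamard} is \eqref{eq:contraction} rewritten in the norm \eqref{eq:trajectory-graph-norm}, and the Lipschitz estimate follows by applying \eqref{eq:contraction} to the difference of two mild solutions; your remarks on the linearity of \eqref{eq:variation} and on uniqueness inherited from \cref{thm:graphwp} only make explicit what the paper leaves implicit.
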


\begin{proof}
By \cref{lem:modewise}(ii), the mild solution satisfies $\zeta(t)=(\mathscr Ru)(t)$ in $\mathcal H_{\nu}$ for every $t\in[0,\Tend]$; hence $(u,\zeta)\in\mathfrak G_{\nu}(0,\Tend)$.  The estimate \eqref{eq:graph-hadamard} is the contraction estimate \eqref{eq:contraction} written in the norm \eqref{eq:trajectory-graph-norm}, and the Lipschitz estimate follows by applying \eqref{eq:contraction} to the difference of two mild solutions, exactly as in the proof of \cref{thm:graphwp}.
\end{proof}

\begin{remark}[Two complementary consequences of the memory structure]
\label{rem:twofaces}
\Cref{thm:structural-nogo} and \cref{thm:graphwp} describe two different aspects of the same memory mechanism. The decay
\begin{align*}
\displaystyle
m(\omega)\longrightarrow0
\quad
\text{as }|\omega|\to\infty
\end{align*}
prevents the memory dissipation from providing frequency-uniform $L^2(0,\Tend;V)$-coercivity. On the other hand, the internal-variable representation identifies the energy
\begin{align*}
\displaystyle
\norm{\zeta}_{\mathcal K}^{2}
=
\int_{[0,\infty)}
a_1(\zeta(\lambda),\zeta(\lambda))
\diff\nu(\lambda),
\end{align*}
and the coupling identity
\begin{align*}
\displaystyle
a_1(J\zeta,u)
=
\dual{\zeta}{Eu}_{\mathcal K}
\end{align*}
produces the exact cancellation in \eqref{eq:dissip}. Thus, the missing coercivity of $a_0$ is not replaced by the coercivity of the memory term in the instantaneous energy space. Instead, well-posedness is obtained in the extended memory space through the $m$-dissipativity of the augmented generator. Extended-state semigroup formulations for memory equations are classical
\cite{Dafermos1970,FabrizioGiorgiPata2010}. The point of
\cref{thm:graphwp} is that the contraction estimate uses only
\begin{align*}
\displaystyle
a_0(u,u)\geq0
\end{align*}
and contains no positive lower bound for $a_0$. It therefore remains valid for families of problems whose instantaneous coercivity constant tends to zero.
\end{remark}

\section{Weak formulation and identification}\label{sec:weak}
\subsection{The initial datum encoded in the weak formulation}
The basic graph-space solution need not belong to $L^2(0,\Tend;V)$. Therefore, neither the term $a_0(u,v)$ nor a time trace $u(0)$ should be used directly in the weak formulation. Both difficulties can be avoided by using the memory potential $\xi$ and the first-moment field $\eta$.

Recall that, whenever $0<M_0<\infty$,
\begin{align}\label{eq:xidot-recalled}
\partial_t\xi
=
M_0u-\eta,
\quad \text{or equivalently}\quad
M_0u=\partial_t\xi+\eta.
\end{align}
For a sufficiently regular solution, this identity gives
\begin{align*}
\int_0^\Tend a_0(u(t),v)\phi(t)\diff t
&=
\frac{1}{M_0}
\int_0^\Tend
a_0(\partial_t\xi(t)+\eta(t),v)\phi(t)\diff t
\\
&=
-\frac{1}{M_0}
\int_0^\Tend
a_0(\xi(t),v)\phi'(t)\diff t
+\frac{1}{M_0}
\int_0^\Tend
a_0(\eta(t),v)\phi(t)\diff t,
\end{align*}
provided that
\begin{align*}
\displaystyle
\xi(0)=0,
\quad
\phi(\Tend)=0.
\end{align*}
Thus, after integration by parts, the form $a_0$ acts only on $\xi$ and $\eta$, both of which are $V$-valued. Similarly, the time derivative of $u$ is interpreted distributionally:
\begin{align*}
\int_0^\Tend
(\partial_tu(t),v)_H\phi(t)\diff t
=
-\int_0^\Tend
(u(t),v)_H\phi'(t)\diff t
-
(u_0,v)_H\phi(0).
\end{align*}
The boundary term involving $u_0$ encodes the initial condition without requiring an a priori trace of $u$. These identities motivate the following definition.

\begin{definition}[Weak solution]\label{def:weak}
Assume \textnormal{(B1)--(B3)} together with
\begin{align*}
\displaystyle
0<M_0<\infty,
\quad
M_1<\infty.
\end{align*}
A function
\begin{align*}
\displaystyle
u\in L^\infty(0,\Tend;H)
\end{align*}
is called a \emph{weak solution} of \cref{eq:model} with data $(u_0,f)$ if the associated fields
\begin{align*}
\displaystyle
\xi(t)
=
\int_{[0,\infty)}
\zeta(\lambda,t)\diff\nu(\lambda),
\quad
\eta(t)
=
\int_{[0,\infty)}
\lambda\zeta(\lambda,t)\diff\nu(\lambda),
\end{align*}
where $\zeta$ is the internal-variable family \eqref{eq:zetafam}, whose defining Bochner integral is well defined for every $u\in L^\infty(0,\Tend;H)$, every $\lambda\geq0$, and every $t\in[0,\Tend]$,
satisfy
\begin{align}\label{eq:weak-regularity}
\xi
&\in
L^\infty(0,\Tend;V)
\cap
H^1(0,\Tend;H),
\\
\eta
&\in
L^2(0,\Tend;V),
\nonumber
\end{align}
together with
\begin{align}\label{eq:weak-xi-relation}
\xi(0)=0,
\quad
\partial_t\xi=M_0u-\eta
\quad\text{in }L^2(0,\Tend;H),
\end{align}
and if, for any $v\in V$ and any $\phi\in \mathcal{C}^\infty([0,\Tend])$ satisfying $\phi(\Tend)=0$,
\begin{align}\label{eq:weakform}
&-
\int_0^\Tend
(u(t),v)_H\phi'(t)\diff t
-
(u_0,v)_H\phi(0)
\nonumber\\
&\quad
-
\frac{1}{M_0}
\int_0^\Tend
a_0(\xi(t),v)\phi'(t)\diff t
+
\frac{1}{M_0}
\int_0^\Tend
a_0(\eta(t),v)\phi(t)\diff t
\nonumber\\
&\quad
+
\int_0^\Tend
a_1(\xi(t),v)\phi(t)\diff t
=
\int_0^\Tend
(f(t),v)_H\phi(t)\diff t.
\end{align}
\end{definition}
Every term in \cref{eq:weakform} is well defined under \cref{eq:weak-regularity}. Indeed, the terms involving $u$, $u_0$, and $f$ are evaluated in $H$, while the forms $a_0$ and $a_1$ act only on the $V$-valued fields $\xi$ and $\eta$. In particular, the formulation never requires $u(t)\in V$. The two boundary conditions
\begin{align*}
\displaystyle
\phi(\Tend)=0,
\quad
\xi(0)=0,
\end{align*}
remove the terminal and initial boundary terms arising from the integration by parts in $\xi$. The remaining boundary term
\begin{align*}
\displaystyle
-(u_0,v)_H\phi(0)
\end{align*}
encodes the initial condition for $u$. Thus, no time trace of $u$ is assumed in the definition itself.

\subsection{A priori estimate and identification of the weak solution}
We first derive the energy estimate for sufficiently regular solutions. The estimate exhibits the cancellation between the principal equation and the internal-variable family and does not use any positive lower bound for $a_0$.

\begin{theorem}[Coercivity-independent energy estimate]
\label{thm:apriori}
Assume \textnormal{(B1)} and \textnormal{(B3)}, and let $k$ be a locally integrable completely monotone kernel. Let $(u,\zeta)$ be a regular solution of the augmented system on $[0,\Tend]$ in the following sense: the principal equation may be tested with $u$, the internal-variable equation may be tested with $\zeta(\lambda,\cdot)$ for $\nu$-almost every $\lambda$, and all Hilbert-space chain rules, differentiations under the $\nu$-integral, and changes in the order of integration appearing in the proof below are justified. Assume that
\begin{align*}
\displaystyle
u(0)=u_0,\quad
\zeta(\lambda,0)=0
\quad\text{for $\nu$-almost every $\lambda$}.
\end{align*}
Then,
\begin{align}\label{eq:apriori}
&\sup_{t\in[0,\Tend]}
\left[
\norm{u(t)}_H^2
+
\int_{[0,\infty)}
a_1(\zeta(\lambda,t),\zeta(\lambda,t))
\diff\nu(\lambda)
\right]
\nonumber\\
&\quad
+
2\int_0^\Tend a_0(u(t),u(t))\diff t
\nonumber\\
&\quad
+
2\int_0^\Tend
\int_{[0,\infty)}
\lambda
a_1(\zeta(\lambda,t),\zeta(\lambda,t))
\diff\nu(\lambda)\diff t
\nonumber\\
&\leq
C_{\Tend}
\left(
\norm{u_0}_H^2
+
\norm{f}_{L^2(0,\Tend;H)}^2
\right),
\end{align}
where one may take
\begin{align}\label{eq:CT-def}
C_{\Tend}
:=
(2+\Tend)e^{\Tend}.
\end{align}
In particular, $C_{\Tend}$ does not involve any positive coercivity constant for $a_0$.
\end{theorem}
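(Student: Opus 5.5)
The plan is a direct energy argument on the augmented system \eqref{eq:augmented-system}, mirroring the dissipativity computation in Step~1 of the proof of \cref{thm:graphwp}.

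First, test the principal equation with $u(t)$ and use the chain rule to get
\[
\tfrac12\tfrac{\diff}{\diff t}\norm{u}_H^2+a_0(u,u)+a_1(\xi,u)=(f,u)_H .
\]
Second, for $\nu$-almost every $\lambda$, apply $a_1(\cdot,\zeta(\lambda,t))$ to the internal-variable equation $\partial_t\zeta+\lambda\zeta=u$. Integrating against $\nu$ (interchange assumed justified) gives
\[
\tfrac12\tfrac{\diff}{\diff t}\int a_1(\zeta,\zeta)\diff\nu+\int\lambda a_1(\zeta,\zeta)\diff\nu=\int a_1(u,\zeta(\lambda))\diff\nu(\lambda).
\]
By \eqref{eq:adjoint-coupling} and the symmetry of $a_1$, the right-hand side equals $a_1(J\zeta,u)=a_1(\xi,u)$. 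Adding the two identities therefore cancels the coupling terms exactly.

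Writing $\mathcal E(t):=\norm{u(t)}_H^2+\norm{\zeta(t)}_{\mathcal K}^2$, we obtain
\[
\mathcal E'(t)+2a_0(u,u)+2\int\lambda a_1(\zeta,\zeta)\diff\nu=2(f,u)_H\le\norm{f}_H^2+\norm{u}_H^2\le \norm{f}_H^2+\mathcal E(t).
\]
Integrating from $0$ to $t$ and using $\mathcal E(0)=\norm{u_0}_H^2$ (since $\zeta(\cdot,0)=0$) gives
\[
\mathcal E(t)+D(t)\le\norm{u_0}_H^2+\norm{f}_{L^2(0,\Tend;H)}^2+\int_0^t\mathcal E(s)\diff s ,
\]
where $D(t)$ denotes the time-integrated dissipation terms.

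Third, apply Gronwall to $\mathcal E+D$, noting that $D\ge0$ because $a_0\ge0$, $\lambda\ge0$ and $a_1\ge0$. This yields $\mathcal E(t)+D(t)\le e^{t}\bigl(\norm{u_0}_H^2+\norm{f}^2_{L^2(0,\Tend;H)}\bigr)$ for every $t\in[0,\Tend]$.

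The left side of \eqref{eq:apriori} has the supremum of $\mathcal E$ and the full dissipation $D(\Tend)$ on separate terms, so we bound them separately. The supremum contributes $e^{\Tend}(\cdots)$. For $D(\Tend)$, we use the integrated inequality at $t=\Tend$ together with $\int_0^{\Tend}\mathcal E\le \Tend e^{\Tend}(\cdots)$; this gives $D(\Tend)\le(1+\Tend e^{\Tend})(\cdots)$. The sum is at most $(1+e^{\Tend}+\Tend e^{\Tend})(\cdots)\le(2+\Tend)e^{\Tend}(\cdots)$, which is exactly $C_{\Tend}$ in \eqref{eq:CT-def}. At no step is a lower bound for $a_0$ used, only $a_0(u,u)\ge0$, which is dropped or kept on the left.

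The only delicate point is the cancellation step. The identity $\int a_1(u,\zeta(\lambda))\diff\nu=a_1(\xi,u)$ requires the $\nu$-integral to commute with $a_1(\cdot,u)$, and here the kernel is only assumed locally integrable, so $M_0$ may be infinite and $\zeta(\cdot,t)$ need not lie in $\mathcal K$. However, the hypothesis of the theorem explicitly grants all interchanges and chain rules, and the same Fubini computation as in \eqref{eq:xidef} and \eqref{eq:history-xi} identifies $\xi$ with $k*u$. So the argument is a formal but fully licensed computation. The bookkeeping for the constant is the other point needing care, and it is handled as above.
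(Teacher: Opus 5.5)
Your proposal is correct and follows the paper's proof essentially step for step. Both use the energy identity with the exact cancellation $a_1(\xi,u)=\int_{[0,\infty)}a_1(u,\zeta(\lambda))\diff\nu(\lambda)$ (licensed by the regularity hypothesis, as you note, since $M_0$ may be infinite here), Young's inequality and Gr\"onwall for the supremum, and the integrated identity at $t=\Tend$ for the dissipation, combined via $e^{\Tend}+1+\Tend e^{\Tend}\leq(2+\Tend)e^{\Tend}$; incidentally, your Gr\"onwall bound on $\mathcal E+D$ already gives $D(\Tend)\leq e^{\Tend}\bigl(\norm{u_0}_H^2+\norm{f}_{L^2(0,\Tend;H)}^2\bigr)$ directly, so the constant $2e^{\Tend}$ would also suffice.
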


\begin{proof}
Set
\begin{align*}
\mathcal E(t)
&:=
\frac12\norm{u(t)}_H^2
+
\frac12\int_{[0,\infty)}
a_1(\zeta(\lambda,t),\zeta(\lambda,t))\diff\nu(\lambda),\\
\mathcal R(t)
&:=
a_0(u(t),u(t))
+
\int_{[0,\infty)}
\lambda a_1(\zeta(\lambda,t),\zeta(\lambda,t))\diff\nu(\lambda).
\end{align*}
Testing the principal equation with $u$, testing the internal-variable equation with
$\zeta(\lambda)$ in the $a_1$-inner product, and integrating with respect to $\nu$ give
\begin{align}\label{eq:energy-identity}
\frac{\diff}{\diff t}\mathcal E(t)+\mathcal R(t)=(f(t),u(t))_H.
\end{align}
The coupling terms cancel because
\begin{align*}
a_1(\xi(t),u(t))
=
\int_{[0,\infty)}a_1(u(t),\zeta(\lambda,t))\diff\nu(\lambda).
\end{align*}
Since $\mathcal R\geq0$, Young's inequality gives
\begin{align*}
\mathcal E'(t)
\leq
\mathcal E(t)+\frac12\norm{f(t)}_H^2.
\end{align*}
Hence Gr\"onwall's inequality and $\mathcal E(0)=\frac12\norm{u_0}_H^2$ yield
\begin{align*}
\sup_{t\in[0,\Tend]}\mathcal E(t)
\leq
\frac{e^{\Tend}}{2}
\left(\norm{u_0}_H^2+\norm{f}_{L^2(0,\Tend;H)}^2\right).
\end{align*}
Integrating \eqref{eq:energy-identity} and using the same estimate gives
\begin{align*}
\int_0^{\Tend}\mathcal R(t)\diff t
\leq
\frac{1+\Tend e^{\Tend}}{2}
\left(\norm{u_0}_H^2+\norm{f}_{L^2(0,\Tend;H)}^2\right).
\end{align*}
Combining these inequalities with
$e^{\Tend}+1+\Tend e^{\Tend}\leq(2+\Tend)e^{\Tend}$ proves
\cref{eq:apriori}. No positive lower bound for $a_0$ is used.
\end{proof}

\begin{theorem}[Identification of the graph-space solution]
\label{thm:exist}
Assume \textnormal{(B1)--(B3)} together with
\begin{align*}
\displaystyle
0<M_0<\infty,
\quad
M_1<\infty.
\end{align*}
Let
\begin{align*}
\displaystyle
X=(u,\zeta)\in \mathcal{C}([0,\Tend];\Wsp)
\end{align*}
be the unique mild solution given by \cref{thm:graphwp}, with
\begin{align*}
\displaystyle
X(0)=(u_0,0).
\end{align*}
We define
\begin{align*}
\displaystyle
\xi(t)
:=
\int_{[0,\infty)}
\zeta(\lambda,t)\diff\nu(\lambda),
\quad
\eta(t)
:=
\int_{[0,\infty)}
\lambda\zeta(\lambda,t)\diff\nu(\lambda).
\end{align*}
Then,
\begin{align*}
\displaystyle
u\in \mathcal{C}([0,\Tend];H),
\end{align*}
\begin{align*}
\displaystyle
\xi\in \mathcal{C}([0,\Tend];V)\cap H^1(0,\Tend;H),
\quad
\xi(0)=0,
\end{align*}
and
\begin{align*}
\displaystyle
\eta\in L^2(0,\Tend;V).
\end{align*}
Furthermore,
\begin{align*}
\displaystyle
\partial_t\xi=M_0u-\eta
\quad
\text{in }L^2(0,\Tend;H),
\end{align*}
and $u$ satisfies the weak formulation \cref{eq:weakform}. Consequently, the graph-space mild solution is a weak solution in the sense of \cref{def:weak}, and
\begin{align*}
\displaystyle
u(0)=u_0
\quad\text{strongly in }H.
\end{align*}
In addition,
\begin{align}\label{eq:reduced-apriori}
&\sup_{t\in[0,\Tend]}
\left[
\norm{u(t)}_H^2
+
\norm{\zeta(t)}_{\mathcal K}^2
\right]
+
2\int_0^\Tend
\int_{[0,\infty)}
\lambda
a_1(\zeta(\lambda,t),\zeta(\lambda,t))
\diff\nu(\lambda)\diff t
\nonumber\\
&\leq
C_{\Tend}
\left(
\norm{u_0}_H^2
+
\norm{f}_{L^2(0,\Tend;H)}^2
\right).
\end{align}
If, in addition, $u\in L^2(0,\Tend;V)$, then the full estimate
\cref{eq:apriori} holds.
\end{theorem}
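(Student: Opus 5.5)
The plan is to argue by approximation with strong solutions. Every property I need is an identity or estimate that passes to the limit in $C([0,\Tend];\Wsp)$, or in weaker topologies, under the contraction estimate \eqref{eq:contraction}.

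\emph{Step 1: regularity of $\xi$ and $\eta$.} The relation $u\in C([0,\Tend];H)$ holds because $X\in C([0,\Tend];\Wsp)$. Since $J$ is bounded from $\mathcal K$ to $V$ by \eqref{eq:Jbound}, we get $\xi=J\zeta\in C([0,\Tend];V)$. Moreover $\xi(0)=0$ because $\zeta(0)=0$. For $\eta$ I will use Cauchy--Schwarz with the measure $\lambda\,\diff\nu$:
\[
a_1(\eta(t),\eta(t))\le M_1\int_{[0,\infty)}\lambda\,a_1(\zeta(\lambda,t),\zeta(\lambda,t))\diff\nu(\lambda).
\]
So $\eta\in L^2(0,\Tend;V)$ as soon as the dissipation integral $\int_0^\Tend\!\int\lambda\,a_1(\zeta,\zeta)\diff\nu\diff t$ is finite. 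That integral is controlled in Step 2.

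\emph{Step 2: the reduced estimate \eqref{eq:reduced-apriori}.} Pick $X_{0,n}\in D(\mathcal A)$ with $X_{0,n}\to(u_0,0)$, and $f_n\in C^1([0,\Tend];H)$ with $f_n\to f$ in $L^2(0,\Tend;H)$. The corresponding strong solutions $X_n$ then converge to $X$ in $C([0,\Tend];\Wsp)$. For these strong solutions the energy identity is rigorous: integrate \eqref{eq:dissip-identity} in the form
\[
\tfrac12\tfrac{\diff}{\diff t}\norm{X_n}_{\Wsp}^2=\dual{\mathcal AX_n}{X_n}_{\Wsp}+(f_n,u_n)_H .
\]
Repeating the Gr\"onwall argument of \cref{thm:apriori} gives \eqref{eq:apriori} for $X_n$, with $\norm{u_0}_H^2$ replaced by $\norm{X_{0,n}}_{\Wsp}^2$. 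In the limit I will drop the $a_0$-term, which is nonnegative; this is why only the reduced estimate survives. The supremum term passes to the limit by uniform convergence. The $\lambda$-weighted dissipation term passes by Fatou's lemma, after extracting a subsequence converging $\nu\otimes\diff t$-a.e.\ (or by weak lower semicontinuity in the weighted $L^2$ space). If additionally $u\in L^2(0,\Tend;V)$, I will instead test the mild solution directly. Alternatively, I will note that $a_0(u_n,u_n)$ is bounded, so weak lower semicontinuity gives the full estimate \eqref{eq:apriori}. This step is where I expect the most care to be needed. The reason is that $u_n\to u$ only in $C([0,\Tend];H)$, so the $a_0$-term needs a weak limit in $L^2(0,\Tend;V)$, identified with $u$ via the $H$-convergence.

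\emph{Step 3: $\partial_t\xi=M_0u-\eta$ and the weak form.} For strong solutions, $\zeta_n'=Eu_n-\Lambda\zeta_n$ in $\mathcal K$. Applying the bounded operator $J$ gives $\partial_t\xi_n=M_0u_n-\eta_n$; here $\Lambda\zeta_n\in\mathcal K$, and $J\Lambda\zeta_n=\eta_n$ is well defined because $M_1<\infty$. Step 2, applied to $X_n-X_m$, shows that $\eta_n$ is Cauchy in $L^2(0,\Tend;V)$, so $\eta_n\to\eta$ there. Together with $u_n\to u$ and $\xi_n\to\xi$ uniformly, this gives $\partial_t\xi_n\to M_0u-\eta$ in $L^2(0,\Tend;H)$. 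Hence $\xi\in H^1(0,\Tend;H)$ and the identity holds. For the weak form, I will test the first component of the strong equation with $v\phi$ and integrate by parts in time. I then substitute $u_n=M_0^{-1}(\partial_t\xi_n+\eta_n)$ only inside $a_0$, which is legitimate since all fields are $V$-valued for strong solutions, and integrate the $\partial_t\xi_n$ part by parts using $\phi(\Tend)=0$. The initial boundary term $a_0(\xi_n(0),v)\phi(0)$ does not vanish, since $\zeta_n(0)\neq0$; but it tends to zero because $\xi_n(0)=J\zeta_n(0)\to0$ in $V$. Every term of \eqref{eq:weakform} is linear and continuous under the established convergences, so the identity passes to the limit. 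Finally, $u(0)=u_0$ strongly in $H$ follows from $X\in C([0,\Tend];\Wsp)$ and $X(0)=(u_0,0)$, so $u$ is a weak solution in the sense of \cref{def:weak}.
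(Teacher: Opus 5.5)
Your overall route is the paper's. You approximate by strong solutions and run the Gr\"onwall argument of \cref{thm:apriori} on $X_n$, passing to the limit by lower semicontinuity. You then apply $J$ to $\zeta_n'=Eu_n-\Lambda\zeta_n$ and pass to the limit in the tested equation, with the boundary remainder $a_0(\xi_n(0),v)\phi(0)\to0$.

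You differ from the paper on the convergence of $\eta_n$. You apply the energy estimate to $X_n-X_m$ and use Cauchy--Schwarz with respect to $\lambda\,\diff\nu$. The paper instead proves a uniform bound $D_*$ on the weighted dissipation, splits at $\lambda=\Lambda$, and controls the tail by $\int_{(\Lambda,\infty)}\lambda\,\diff\nu(\lambda)$. Your version is shorter and gives a rate in terms of the data error. As written, though, it only produces \emph{some} limit of $\eta_n$ in $L^2(0,\Tend;V)$, and you must show this limit is $\eta=\int\lambda\zeta\,\diff\nu$. To do so, restrict the difference bound to $\lambda\in[0,\Lambda]$ and let $m\to\infty$ using $\zeta_m\to\zeta$ in $C([0,\Tend];\mathcal K)$. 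Then let $\Lambda\to\infty$ by monotone convergence to get $\int_0^\Tend\int\lambda\,a_1(\zeta_n-\zeta,\zeta_n-\zeta)\,\diff\nu\,\diff t\to0$; Cauchy--Schwarz finishes.

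The genuine gap is that you never identify the internal component of the mild solution with the causal family generated by $u$. \Cref{def:weak} builds $\xi$ and $\eta$ from the family \eqref{eq:zetafam} generated by $u$. Your $\xi=J\zeta$ and $\eta$ come instead from the internal component of the mild solution, which is produced by an abstract semigroup limit. Nothing in your argument shows that $\zeta(t)=(\mathscr Ru)(t)$, so what you have proved is the weak identity for fields attached to the abstract $\zeta$. Your closing sentence deduces ``weak solution in the sense of \cref{def:weak}'' from $u(0)=u_0$ alone and skips this identification. The paper closes it in Step~2 with \cref{lem:modewise}(ii). For strong solutions, $\zeta_n(t)=e^{-t\Lambda}\zeta_n(0)+(\mathscr Ru_n)(t)$ in $\mathcal H_{\nu}$. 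One then passes to the limit using $\zeta_n(0)\to0$ in $\mathcal K\hookrightarrow\mathcal H_{\nu}$ and the bound \eqref{eq:causal-bound}.

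You need the same identification for your ``test the mild solution directly'' route to \eqref{eq:apriori} when $u\in L^2(0,\Tend;V)$. The per-mode $a_1$-energy identity uses $\partial_t\zeta(\lambda,\cdot)+\lambda\zeta(\lambda,\cdot)=u$ mode by mode, with $\zeta(\lambda,\cdot)\in H^1(0,\Tend;V)$. Drop your alternative via weak lower semicontinuity; it does not work. Since $a_0$ is only non-negative, a bound on $\int_0^\Tend a_0(u_n,u_n)\,\diff t$ gives no $L^2(0,\Tend;V)$ bound on $u_n$. Moreover, convergence in $C([0,\Tend];H)$ alone does not make $v\mapsto\int a_0(v,v)$ lower semicontinuous for degenerate measurable $A_0$. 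That property is equivalent to closability of the form, which can fail.
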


\begin{proof}
We divide the proof into five steps.

\medskip
\noindent
\emph{Step 1: approximation by strong solutions.}
Because $\mathcal A$ is $m$-dissipative and densely defined, the mild solution can be approximated in $\mathcal{C}([0,\Tend];\Wsp)$ by strong solutions
\begin{align*}
\displaystyle
X_n=(u_n,\zeta_n)
\end{align*}
corresponding to data
\begin{align*}
\displaystyle
X_{0,n}\in D(\mathcal A),
\quad
f_n\in \mathcal{C}^1([0,\Tend];H),
\end{align*}
such that
\begin{align*}
\displaystyle
X_{0,n}\longrightarrow(u_0,0)
\quad\text{in }\Wsp,
\quad
f_n\longrightarrow f
\quad\text{in }L^2(0,\Tend;H).
\end{align*}
In particular,
\begin{align*}
\displaystyle
u_n\longrightarrow u
\quad\text{in }\mathcal{C}([0,\Tend];H),
\quad
\zeta_n\longrightarrow\zeta
\quad\text{in }\mathcal{C}([0,\Tend];\mathcal K).
\end{align*}

\medskip
\noindent
\emph{Step 2: the memory potential.}
The aggregation operator
\begin{align*}
\displaystyle
J\zeta
=
\int_{[0,\infty)}
\zeta(\lambda)\diff\nu(\lambda)
\end{align*}
is bounded from $\mathcal K$ into $V$. Therefore,
\begin{align*}
\displaystyle
\xi_n:=J\zeta_n
\longrightarrow
\xi:=J\zeta
\quad
\text{in }C([0,\Tend];V).
\end{align*}
Because the initial memory component converges to zero,
\begin{align*}
\displaystyle
\xi(0)=0.
\end{align*}
By \cref{lem:modewise}(ii), the internal component of the mild solution satisfies
\begin{align}\label{eq:mild-modewise-id}
\zeta(t)
=
(\mathscr Ru)(t)
\quad
\text{in }\mathcal H_{\nu}
\quad
\text{for every }t\in[0,\Tend].
\end{align}
Consequently, for each $t$, the family $\zeta(\cdot,t)$ coincides $\nu$-almost everywhere with the internal-variable family \eqref{eq:zetafam} generated by $u$; the fields $\xi$ and $\eta$ defined from the mild solution are therefore exactly the fields of \cref{def:weak} associated with $u$.  Integrating \eqref{eq:mild-modewise-id} with respect to $\nu$ and interchanging the order of integration by Fubini's theorem, as in \cref{cor:graphwp}, gives
\begin{align*}
\displaystyle
\xi(t)
=
\int_0^t k(t-s)u(s)\diff s
=
(k*u)(t).
\end{align*}

\medskip
\noindent
\emph{Step 3: construction of the first-moment field.}
We first record a memory-dissipation bound that is uniform in $n$. Each $X_n$ is a strong solution, so
\begin{align*}
\displaystyle
\frac{\diff}{\diff t}\frac12\norm{X_n(t)}_{\Wsp}^2
=
\dual{\mathcal AX_n(t)}{X_n(t)}_{\Wsp}
+
(f_n(t),u_n(t))_H
\end{align*}
on $[0,\Tend]$, and \eqref{eq:dissip-identity} expresses the first term on the right-hand side as the negative of the instantaneous and memory dissipation. The Gr\"onwall argument in the proof of \cref{thm:apriori} therefore applies verbatim, with the initial energy $\frac12\norm{X_{0,n}}_{\Wsp}^2$ in place of $\frac12\norm{u_0}_H^2$ (the hypothesis $\zeta(\lambda,0)=0$ enters that proof only through the value of the initial energy), and yields
\begin{align}\label{eq:diss-uniform}
\int_0^\Tend
\int_{[0,\infty)}
\lambda
a_1(\zeta_n(\lambda,t),\zeta_n(\lambda,t))
\diff\nu(\lambda)\diff t
\leq
D_*
:=
\frac{C_{\Tend}}{2}
\sup_{n}
\left(
\norm{X_{0,n}}_{\Wsp}^2
+
\norm{f_n}_{L^2(0,\Tend;H)}^2
\right),
\end{align}
where $D_*$ is finite because both approximating sequences converge.

The limit inherits this bound. For fixed $\Lambda>0$, we have $\lambda a_1\leq\Lambda a_1$ on $[0,\Lambda]$, so the convergence $\zeta_n\to\zeta$ in $\mathcal{C}([0,\Tend];\mathcal K)$ gives
\begin{align*}
\int_0^\Tend
\int_{[0,\Lambda]}
\lambda
a_1(\zeta(\lambda,t),\zeta(\lambda,t))
\diff\nu(\lambda)\diff t
=
\lim_{n\to\infty}
\int_0^\Tend
\int_{[0,\Lambda]}
\lambda
a_1(\zeta_n(\lambda,t),\zeta_n(\lambda,t))
\diff\nu(\lambda)\diff t
\leq
D_*.
\end{align*}
Letting $\Lambda\to\infty$, the monotone convergence theorem yields
\begin{align}\label{eq:diss-limit}
\int_0^\Tend
\int_{[0,\infty)}
\lambda
a_1(\zeta(\lambda,t),\zeta(\lambda,t))
\diff\nu(\lambda)\diff t
\leq
D_*
<\infty.
\end{align}
For almost every $t$, the Cauchy--Schwarz inequality with respect to the measure $\lambda\diff\nu(\lambda)$ yields
\begin{align}\label{eq:eta-CS}
a_1(\eta(t),\eta(t))
&=
a_1\left(
\int_{[0,\infty)}
\lambda\zeta(\lambda,t)\diff\nu(\lambda),
\int_{[0,\infty)}
\lambda\zeta(\lambda,t)\diff\nu(\lambda)
\right)
\nonumber\\
&\leq
M_1
\int_{[0,\infty)}
\lambda
a_1(\zeta(\lambda,t),\zeta(\lambda,t))
\diff\nu(\lambda).
\end{align}
Therefore,
\begin{align*}
\displaystyle
\eta\in L^2(0,\Tend;V);
\end{align*}
the same inequality applied to $\zeta_n$ together with \eqref{eq:diss-uniform} shows $\eta_n\in L^2(0,\Tend;V)$ for every $n$.

We next show the strong convergence
\begin{align*}
\displaystyle
\eta_n\longrightarrow\eta
\quad
\text{in }L^2(0,\Tend;V).
\end{align*}
For $\Lambda>0$, we split
\begin{align*}
\eta_n(t)-\eta(t)
&=
\int_{[0,\Lambda]}
\lambda\left(\zeta_n(\lambda,t)-\zeta(\lambda,t)\right)
\diff\nu(\lambda)
\\
&\quad
+
\int_{(\Lambda,\infty)}
\lambda\zeta_n(\lambda,t)
\diff\nu(\lambda)
-
\int_{(\Lambda,\infty)}
\lambda\zeta(\lambda,t)
\diff\nu(\lambda).
\end{align*}
For the first term, the Cauchy--Schwarz inequality with respect to the measure $\lambda\diff\nu(\lambda)$ restricted to $[0,\Lambda]$, whose total mass is at most $M_1$, gives
\begin{align*}
a_1\left(
\int_{[0,\Lambda]}\lambda(\zeta_n-\zeta)\diff\nu(\lambda),
\int_{[0,\Lambda]}\lambda(\zeta_n-\zeta)\diff\nu(\lambda)
\right)
\leq
M_1\Lambda
\norm{\zeta_n(t)-\zeta(t)}_{\mathcal K}^2,
\end{align*}
which tends to zero uniformly on $[0,\Tend]$ as $n\to\infty$. For the tail terms, the same inequality on $(\Lambda,\infty)$ gives, after integration in time and by \eqref{eq:diss-uniform},
\begin{align*}
\int_0^\Tend
a_1\left(
\int_{(\Lambda,\infty)}\lambda\zeta_n(\lambda,t)\diff\nu(\lambda),
\int_{(\Lambda,\infty)}\lambda\zeta_n(\lambda,t)\diff\nu(\lambda)
\right)
\diff t
\leq
\varepsilon(\Lambda)
D_*,
\end{align*}
where
\begin{align*}
\displaystyle
\varepsilon(\Lambda)
:=
\int_{(\Lambda,\infty)}
\lambda\diff\nu(\lambda),
\end{align*}
and likewise for the term involving $\zeta$ by \eqref{eq:diss-limit}. Because $M_1<\infty$, we have $\varepsilon(\Lambda)\to0$ as $\Lambda\to\infty$. Combining the three contributions through
\begin{align*}
\displaystyle
a_1(x+y+z,x+y+z)
\leq
3\left[a_1(x,x)+a_1(y,y)+a_1(z,z)\right],
\end{align*}
we obtain
\begin{align*}
\displaystyle
\limsup_{n\to\infty}
\int_0^\Tend
a_1\left(\eta_n(t)-\eta(t),\eta_n(t)-\eta(t)\right)
\diff t
\leq
6\varepsilon(\Lambda)D_*
\end{align*}
for every $\Lambda>0$. Therefore, the left-hand side vanishes, and the coercivity \textnormal{(B1)} gives the claimed strong convergence.

For the strong approximations,
\begin{align*}
\displaystyle
\partial_t\xi_n=M_0u_n-\eta_n.
\end{align*}
Using $u_n\to u$ in $\mathcal{C}([0,\Tend];H)$, the strong convergence $\eta_n\to\eta$ in $L^2(0,\Tend;V)$, and $\xi_n\to\xi$ in $\mathcal{C}([0,\Tend];V)$, we may pass to the limit to obtain
\begin{align*}
\displaystyle
\partial_t\xi=M_0u-\eta
\quad
\text{in }L^2(0,\Tend;H).
\end{align*}
It follows that
\begin{align*}
\displaystyle
\xi\in H^1(0,\Tend;H).
\end{align*}

\medskip
\noindent
\emph{Step 4: passage to the weak formulation.}
For every strong approximating solution, any $v\in V$, and any $\phi\in \mathcal{C}^\infty([0,\Tend])$ with $\phi(\Tend)=0$, testing the principal equation with $\phi v$ gives
\begin{align*}
&-
\int_0^\Tend
(u_n(t),v)_H\phi'(t)\diff t
-
(u_n(0),v)_H\phi(0)
\\
&\quad
+
\int_0^\Tend
a_0(u_n(t),v)\phi(t)\diff t
+
\int_0^\Tend
a_1(\xi_n(t),v)\phi(t)\diff t
\\
&=
\int_0^\Tend
(f_n(t),v)_H\phi(t)\diff t.
\end{align*}
Using
\begin{align*}
\displaystyle
u_n
=
\frac1{M_0}
\left(\partial_t\xi_n+\eta_n\right)
\end{align*}
in the $a_0$-term, we have
\begin{align*}
\int_0^\Tend a_0(u_n,v)\phi\diff t
&=
\frac1{M_0}
\int_0^\Tend
a_0(\partial_t\xi_n,v)\phi\diff t
+
\frac1{M_0}
\int_0^\Tend
a_0(\eta_n,v)\phi\diff t
\\
&=
-\frac1{M_0}
\int_0^\Tend
a_0(\xi_n,v)\phi'\diff t
+
\frac1{M_0}
\int_0^\Tend
a_0(\eta_n,v)\phi\diff t
+
r_n,
\end{align*}
where the boundary remainder is
\begin{align*}
\displaystyle
r_n
=
-\frac1{M_0}a_0(\xi_n(0),v)\phi(0).
\end{align*}
Because $\xi_n(0)\to0$ in $V$, one has $r_n\to0$. We pass to the limit. The convergences
\begin{align*}
\displaystyle
u_n\to u
\quad\text{in }\mathcal{C}([0,\Tend];H),
\quad
\xi_n\to\xi
\quad\text{in }\mathcal{C}([0,\Tend];V),
\end{align*}
\begin{align*}
\displaystyle
\eta_n\to\eta
\quad\text{in }L^2(0,\Tend;V),
\quad
f_n\to f
\quad\text{in }L^2(0,\Tend;H)
\end{align*}
give
\begin{align*}
&-
\int_0^\Tend
(u(t),v)_H\phi'(t)\diff t
-
(u_0,v)_H\phi(0)
\\
&\quad
-
\frac1{M_0}
\int_0^\Tend
a_0(\xi(t),v)\phi'(t)\diff t
+
\frac1{M_0}
\int_0^\Tend
a_0(\eta(t),v)\phi(t)\diff t
\\
&\quad
+
\int_0^\Tend
a_1(\xi(t),v)\phi(t)\diff t
=
\int_0^\Tend
(f(t),v)_H\phi(t)\diff t.
\end{align*}
This is precisely \cref{eq:weakform}.

\medskip
\noindent
\emph{Step 5: stability and the initial datum.}
Since
$X\in\mathcal C([0,\Tend];\Wsp)$, its first component satisfies
$u\in\mathcal C([0,\Tend];H)$ and $u(0)=u_0$. The energy identities for the
strong approximations, followed by weak lower semicontinuity in the weighted memory
space, give \cref{eq:reduced-apriori}. If, in addition,
$u\in L^2(0,\Tend;V)$, then the encoded weak formulation implies
$\partial_tu\in L^2(0,\Tend;V')$. The standard Hilbert-space chain rule and the
internal-variable energy identity therefore justify the calculation of
\cref{thm:apriori}, and the full estimate \cref{eq:apriori} follows.
\end{proof}

\begin{proposition}[Uniqueness in the energy subclass]
\label{prop:uniq}
Assume \textnormal{(B1)--(B3)} with
\begin{align*}
\displaystyle
0<M_0<\infty,
\quad
M_1<\infty.
\end{align*}
Then, there is at most one weak solution satisfying
\begin{align*}
\displaystyle
u\in L^2(0,\Tend;V).
\end{align*}
\end{proposition}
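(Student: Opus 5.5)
By linearity, it suffices to show that a weak solution with zero data, $u_0=0$ and $f=0$, which additionally satisfies $u\in L^2(0,\Tend;V)$, vanishes. Let $u_1,u_2$ be two weak solutions in this subclass. The internal variables, $\xi$ and $\eta$ depend linearly on $u$ through \eqref{eq:zetafam}. Hence $w:=u_1-u_2$ lies in $L^\infty(0,\Tend;H)\cap L^2(0,\Tend;V)$, its fields satisfy \eqref{eq:weak-regularity}--\eqref{eq:weak-xi-relation}, and \eqref{eq:weakform} holds with zero data.

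The first step undoes the encoding of the $a_0$-term. Since $w\in L^2(0,\Tend;V)$, the family $\zeta(\lambda,\cdot)=\int_0^\cdot e^{-\lambda(\cdot-s)}w(s)\diff s$ is absolutely continuous with values in $V$. Differentiating under the $\nu$-integral, which is legitimate because $M_0,M_1<\infty$ and $\norm{\zeta(\lambda,t)}_V\le t^{1/2}\norm{w}_{L^2(0,t;V)}$ uniformly in $\lambda$, gives $\partial_t\xi=M_0w-\eta$ in $L^2(0,\Tend;V)$. We may then integrate by parts back in \eqref{eq:weakform}, using $\xi(0)=0$ and $\phi(\Tend)=0$, to obtain
\[
-\int_0^\Tend (w,v)_H\phi'\diff t+\int_0^\Tend a_0(w,v)\phi\diff t+\int_0^\Tend a_1(\xi,v)\phi\diff t=0 .
\]
First taking $\phi\in C_c^\infty(0,\Tend)$ shows that $\partial_tw=-\mathsf A_0w-\mathsf A_1\xi\in L^2(0,\Tend;V')$. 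Hence $w\in C([0,\Tend];H)$. Comparing the identity with a general $\phi$ (where $\phi(0)\ne0$) against the integrated form then gives $w(0)=0$.

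The second step is the energy identity. By the Lions--Magenes chain rule, $\tfrac{\diff}{\diff t}\tfrac12\norm{w}_H^2=-a_0(w,w)-a_1(\xi,w)$ almost everywhere. For the memory term, use $\partial_t\zeta(\lambda,\cdot)=w-\lambda\zeta(\lambda,\cdot)$ in $L^2(0,\Tend;V)$ for every $\lambda$ to get $\tfrac{\diff}{\diff t}\tfrac12 a_1(\zeta(\lambda),\zeta(\lambda))=a_1(w,\zeta(\lambda))-\lambda a_1(\zeta(\lambda),\zeta(\lambda))$. Integrate this in time first, for fixed $\lambda$, and then against $\nu$. Fubini applies because $\nu$ is finite, $\zeta$ is bounded in $V$, and the $\lambda$-weighted term is nonnegative. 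Using $a_1(\xi,w)=\int a_1(w,\zeta(\lambda))\diff\nu$, which is \eqref{eq:adjoint-coupling}, the coupling terms cancel, exactly as in \cref{thm:apriori}. This yields
\[
\tfrac12\norm{w(t)}_H^2+\tfrac12\norm{\zeta(t)}_{\mathcal K}^2+\int_0^t a_0(w,w)\diff s+\int_0^t\!\!\int\lambda a_1(\zeta,\zeta)\diff\nu\diff s=0 .
\]
Every term is nonnegative, so $w\equiv0$.

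The main obstacle is the first step: recovering the unencoded equation, with the time trace $w(0)=0$, from \eqref{eq:weakform}. This is where the hypothesis $u\in L^2(0,\Tend;V)$ is essential, together with the justification that $\xi$ is differentiable in $V$ with derivative $M_0w-\eta$ (and not merely in $H$). Once that is in place, the remaining argument is the coercivity-free energy identity already established.
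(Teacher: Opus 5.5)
Your proposal is correct and follows the paper's argument. You take the difference, recover $\partial_t e=-\mathsf A_0e-\mathsf A_1\xi_e\in L^2(0,\Tend;V')$ so that $e\in C([0,\Tend];H)$ with $e(0)=0$, and then run the coercivity-free energy identity of \cref{thm:apriori} with the coupling cancellation. You also spell out the un-encoding of the $a_0$-term that the paper compresses into ``the equation implies''; note that $\partial_t\xi=M_0w-\eta\in L^2(0,\Tend;V)$ already follows from \eqref{eq:weak-xi-relation}, $\eta\in L^2(0,\Tend;V)$ and $w\in L^2(0,\Tend;V)$, so differentiating under the $\nu$-integral is not actually needed.
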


\begin{proof}
Let $u_1$ and $u_2$ be two such solutions with the same data, and set
\begin{align*}
\displaystyle
e:=u_1-u_2.
\end{align*}
Let $\zeta_e$ and $\xi_e$ denote the corresponding internal-variable family and memory potential. The difference has zero forcing and zero initial datum. Because
\begin{align*}
\displaystyle
e\in L^2(0,\Tend;V),
\quad
\xi_e\in L^\infty(0,\Tend;V),
\end{align*}
the equation implies
\begin{align*}
\displaystyle
\partial_te
=
-\mathsf A_0e-\mathsf A_1\xi_e
\in L^2(0,\Tend;V').
\end{align*}
Therefore,
\begin{align*}
\displaystyle
e\in \mathcal{C}([0,\Tend];H),
\quad
e(0)=0,
\end{align*}
and the standard $V'$-$V$ variational formulation holds for almost every $t$. We may therefore choose $v=e(t)$. Repeating the energy calculation of \cref{thm:apriori} gives, for every $t\in[0,\Tend]$,
\begin{align*}
&\frac12\norm{e(t)}_H^2
+
\frac12
\int_{[0,\infty)}
a_1(\zeta_e(\lambda,t),\zeta_e(\lambda,t))
\diff\nu(\lambda)
\\
&\quad
+
\int_0^t a_0(e(r),e(r))\diff r
\\
&\quad
+
\int_0^t
\int_{[0,\infty)}
\lambda
a_1(\zeta_e(\lambda,r),\zeta_e(\lambda,r))
\diff\nu(\lambda)\diff r
=
0.
\end{align*}
Every term on the left-hand side is non-negative. Consequently,
\begin{align*}
\displaystyle
\norm{e(t)}_H=0
\quad
\text{for every }t\in[0,\Tend],
\end{align*}
and hence $e\equiv0$.
\end{proof}

\begin{remark}[Exponential kernel]\label{rem:expspecial}
For
\begin{align*}
\displaystyle
k(t)=\gamma e^{-\gamma t},
\quad
\gamma>0,
\end{align*}
the representing measure is
\begin{align*}
\displaystyle
\nu=\gamma\delta_\gamma.
\end{align*}
Therefore,
\begin{align*}
\displaystyle
M_0=\gamma,
\quad
M_1=\gamma^2,
\quad
\norm{k}_{L^1(0,\infty)}=1.
\end{align*}
The internal-variable family reduces to a single variable
\begin{align*}
\displaystyle
\zeta(t):=\zeta(\gamma,t),
\end{align*}
and
\begin{align*}
\displaystyle
\xi=\gamma\zeta,
\quad
\eta=\gamma^2\zeta=\gamma\xi.
\end{align*}
\end{remark}

\begin{theorem}[Unconditional uniqueness for the exponential kernel]
\label{thm:uniqexp}
Let
\begin{align*}
\displaystyle
k(t)=\gamma e^{-\gamma t},
\quad
\gamma>0,
\end{align*}
and assume \textnormal{(B1)} and \textnormal{(B3)}. Then, the weak solution of \cref{def:weak} is unique without the additional assumption
\begin{align*}
\displaystyle
u\in L^2(0,\Tend;V).
\end{align*}
\end{theorem}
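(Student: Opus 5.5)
Let $u_{1},u_{2}$ be two weak solutions in the sense of \cref{def:weak} with the same data $(u_{0},f)$, and set $e:=u_{1}-u_{2}$. Since the weak formulation \cref{eq:weakform} and the relations \cref{eq:weak-xi-relation} are linear, $e$ is a weak solution with zero initial datum and zero forcing. Its memory potential $\xi_{e}$ satisfies $\xi_{e}\in L^{\infty}(0,\Tend;V)\cap H^{1}(0,\Tend;H)$ and $\xi_{e}(0)=0$. By \cref{rem:expspecial} we have $\eta_{e}=\gamma\xi_{e}$ and $M_{0}=\gamma$, so \cref{eq:weak-xi-relation} becomes $\partial_{t}\xi_{e}=\gamma e-\gamma\xi_{e}$. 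In other words, $e=\gamma^{-1}(\xi_{e}'+\gamma\xi_{e})$ is completely determined by $\xi_{e}$. The plan is to rewrite the problem as a closed equation for $\xi_{e}$ alone, show by an energy argument that $\xi_{e}\equiv0$, and conclude that $e=0$. This avoids any use of $e\in L^{2}(0,\Tend;V)$.

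\emph{Step 1: a closed equation for $\xi:=\xi_{e}$.} Substitute $e=\gamma^{-1}(\xi'+\gamma\xi)$ and $\eta_{e}=\gamma\xi$ into \cref{eq:weakform} with zero data. The resulting identity says that
\[
p:=\gamma^{-1}\xi'+\xi+\gamma^{-1}\mathsf A_{0}\xi
\]
satisfies $p'=-\mathsf A_{0}\xi-\mathsf A_{1}\xi$ in the sense of $V'$-valued distributions on $(0,\Tend)$, with $p(0)=0$. Here $p(0)=0$ is encoded by the vanishing of the $\phi(0)$-terms in \cref{eq:weakform}. Formally this is the damped second-order equation
\[
\xi''+\gamma\xi'+\mathsf A_{0}\xi'+\gamma\mathsf A_{0}\xi+\gamma\mathsf A_{1}\xi=0,\qquad \xi(0)=0,\quad \xi'(0)=0.
\]
Testing this formally with $\xi'$ gives
\[
\tfrac{\diff}{\diff t}\Bigl[\tfrac12\norm{\xi'}_{H}^{2}+\tfrac{\gamma}{2}a_{0}(\xi,\xi)+\tfrac{\gamma}{2}a_{1}(\xi,\xi)\Bigr]+\gamma\norm{\xi'}_{H}^{2}+a_{0}(\xi',\xi')=0.
\]
Every term apart from the time derivative is non-negative by (B1), so the bracketed energy vanishes identically. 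Coercivity of $a_{1}$ then gives $\xi\equiv0$, hence $\xi'\equiv0$ and $e=\gamma^{-1}(\xi'+\gamma\xi)\equiv0$.

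\emph{Step 2: justification, which is the main obstacle.} The test $v=\xi'(t)$ is not admissible, because $\xi'\in L^{2}(0,\Tend;H)$ only and the term $a_{0}(\xi',\xi')$ is undefined. I would regularise in time. First extend $\xi$ and $e$ by zero to $t<0$. Because the data vanish and $\xi(0)=0$, the extended functions satisfy the weak identity on $(-\infty,\Tend)$ with no boundary term at $t=0$; checking this carefully against \cref{eq:weakform} is the delicate point. Then convolve with a mollifier $\rho_{h}$ supported in $(0,h)$, so that the smoothed function at time $t$ only uses values on $(t-h,t)$. This convolution commutes with the time-independent operators $\mathsf A_{0}$ and $\mathsf A_{1}$. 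The regularised $\xi_{h}=\rho_{h}*\xi$ is smooth in time with values in $V$ on $(-\infty,\Tend-h)$, vanishes near $t=0^{-}$, and satisfies the second-order equation pointwise in $V'$. The energy identity above is therefore rigorous for $\xi_{h}$. Dropping $a_{0}(\xi_{h}',\xi_{h}')\geq0$ gives $\norm{\xi_{h}'(t)}_{H}^{2}+\gamma a_{1}(\xi_{h}(t),\xi_{h}(t))\leq0$, so $\xi_{h}\equiv0$ on $[0,\Tend-h]$.

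\emph{Step 3: conclusion.} Letting $h\downarrow0$, we have $\xi_{h}\to\xi$ in $L^{2}(0,\Tend';V)$ for every $\Tend'<\Tend$, so $\xi\equiv0$. Then $e=\gamma^{-1}(\xi'+\gamma\xi)=0$ in $L^{2}(0,\Tend;H)$, and hence $u_{1}=u_{2}$. Existence of a weak solution is not needed for this argument, since the statement concerns only uniqueness; existence is in any case available from \cref{thm:exist}.
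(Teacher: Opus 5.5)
Your argument is correct, but it takes a different route from the paper. The paper never passes to a second-order equation. It extends the difference identity by density to test functions $\Phi\in H^{1}(0,\Tend;V)$ with $\Phi(\Tend)=0$, and then tests with the primitive $\Phi=-\int_{t}^{\Tend}\xi_{e}(r)\diff r$, so that $\partial_t\Phi=\xi_e$. This is a \emph{lower-order} energy argument. Every pairing is already defined under the regularity of \cref{def:weak}, so no time regularisation is needed. The identity writes $0$ as a sum of non-positive terms, $-\frac{1}{2\gamma}\norm{\xi_e(\Tend)}_H^2-\int_0^\Tend\norm{\xi_e}_H^2\diff t-\frac12 a_1(G(0),G(0))-\dots$, which forces $\xi_e=0$.

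You work one derivative higher, with the natural Kelvin--Voigt energy $\frac12\norm{\xi'}_H^2+\frac{\gamma}{2}a_0(\xi,\xi)+\frac{\gamma}{2}a_1(\xi,\xi)$ of \cref{rem:KV}. That is why you need the zero extension and the one-sided mollification, and your justification goes through:
\begin{itemize}
\item $p\in W^{1,\infty}(0,\Tend;V')$ with $p(0)=0$, which you correctly extract from test functions with $\phi(0)\neq0$.
\item $\xi\in H^{1}(0,\Tend;H)$ with $\xi(0)=0$, so the zero extensions create no jump at $t=0$.
\item Consequently $\tilde p'=-(\mathsf A_0+\mathsf A_1)\tilde\xi$ holds on all of $(-\infty,\Tend)$.
\item With $\operatorname{supp}\rho_h\subset(0,h)$, the function $\xi_h$ is smooth with values in $V$ on all of $(-\infty,\Tend)$, not only on $(-\infty,\Tend-h)$. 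It vanishes for $t\le0$ and may be tested with $\xi_h'(t)\in V$.
\end{itemize}

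Your route gives a stronger, pointwise-in-time energy: it controls $\xi'$ in $H$ and $\xi$ in the $a_1$-norm, and hence $e=\gamma^{-1}\xi'+\xi$ in $H$. It also makes the damped-wave structure explicit, and needs only $C_c^\infty$ test functions plus the $\phi(0)$-terms. The paper's route is shorter and avoids any regularisation, at the price of the density step for $H^{1}(0,\Tend;V)$ test functions. Both arguments rely essentially on $\eta_e=\gamma\xi_e$ and on the symmetry of $a_0$ and $a_1$.
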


\begin{proof}
Let $u_1$ and $u_2$ be two weak solutions with the same data, and
set
\begin{align*}
\displaystyle
e:=u_1-u_2,
\quad
\xi_e:=k*e.
\end{align*}

\medskip
\noindent
\emph{Step 1: relations specific to the exponential kernel.}
Because
\begin{align*}
\displaystyle
M_0=\gamma,
\quad
\eta_e=\gamma\xi_e,
\end{align*}
the identity
\begin{align*}
\displaystyle
\partial_t\xi_e=M_0e-\eta_e
\end{align*}
becomes
\begin{align}\label{eq:exp-e-xi}
\partial_t\xi_e
=
\gamma e-\gamma\xi_e,
\quad
e
=
\frac1\gamma\partial_t\xi_e+\xi_e.
\end{align}
Furthermore,
\begin{align*}
\displaystyle
\xi_e\in L^\infty(0,\Tend;V),
\quad
\xi_e(0)=0.
\end{align*}
Because $e\in L^\infty(0,\Tend;H)$, relation
\eqref{eq:exp-e-xi} also gives
\begin{align*}
\displaystyle
\xi_e\in H^1(0,\Tend;H).
\end{align*}

\medskip
\noindent
\emph{Step 2: weak equation for the difference.}
Subtracting the two weak formulations gives
\begin{align}\label{eq:weakdiff}
&-
\int_0^\Tend
(e,\partial_t\Phi)_H\diff t
-
\frac1\gamma
\int_0^\Tend
a_0(\xi_e,\partial_t\Phi)\diff t
\nonumber\\
&\quad
+
\int_0^\Tend
a_0(\xi_e,\Phi)\diff t
+
\int_0^\Tend
a_1(\xi_e,\Phi)\diff t
=
0
\end{align}
for any
\begin{align*}
\displaystyle
\Phi\in H^1(0,\Tend;V),
\quad
\Phi(\Tend)=0.
\end{align*}
Finite sums of tensor-product functions $\sum_{j=1}^{N}\phi_j(t)v_j$, with $v_j\in V$, $\phi_j\in\mathcal C^\infty([0,\Tend])$, and $\phi_j(\Tend)=0$, are dense in
\begin{align*}
\left\{
\Phi\in H^1(0,\Tend;V):
\Phi(\Tend)=0
\right\}.
\end{align*}
Therefore, \eqref{eq:weakdiff}, first derived for product test functions, extends to every such $\Phi$.

\medskip
\noindent
\emph{Step 3: the integrated test function.}
We define
\begin{align*}
\displaystyle
G(t)
:=
\int_t^\Tend \xi_e(r)\diff r,
\quad
\Phi(t):=-G(t).
\end{align*}
Then,
\begin{align*}
\displaystyle
G'=-\xi_e,
\quad
G(\Tend)=0,
\quad
\partial_t\Phi=\xi_e.
\end{align*}
Because $\xi_e\in L^\infty(0,\Tend;V)$, this test function belongs to $H^1(0,\Tend;V)$.

\medskip
\noindent
\emph{Step 4: evaluation of the terms.}
Using \eqref{eq:exp-e-xi} and $\xi_e(0)=0$, we obtain
\begin{align*}
-\int_0^\Tend(e,\xi_e)_H\diff t
&=
-\frac1\gamma
\int_0^\Tend
(\partial_t\xi_e,\xi_e)_H\diff t
-
\int_0^\Tend\norm{\xi_e}_H^2\diff t
\\
&=
-\frac1{2\gamma}
\norm{\xi_e(\Tend)}_H^2
-
\int_0^\Tend\norm{\xi_e}_H^2\diff t.
\end{align*}
Furthermore,
\begin{align*}
\displaystyle
-\frac1\gamma
\int_0^\Tend
a_0(\xi_e,\xi_e)\diff t
\leq0.
\end{align*}
Because $\Phi=-G$ and $G'=-\xi_e$,
\begin{align*}
\int_0^\Tend a_0(\xi_e,\Phi)\diff t
&=
\int_0^\Tend a_0(G',G)\diff t
=
-\frac12a_0(G(0),G(0)),
\end{align*}
and similarly,
\begin{align*}
\displaystyle
\int_0^\Tend a_1(\xi_e,\Phi)\diff t
=
-\frac12a_1(G(0),G(0)).
\end{align*}

\medskip
\noindent
\emph{Step 5: conclusion.}
Substitution into \eqref{eq:weakdiff} gives
\begin{align*}
0
&=
-\frac1{2\gamma}\norm{\xi_e(\Tend)}_H^2
-
\int_0^\Tend\norm{\xi_e}_H^2\diff t
\\
&\quad
-
\frac1\gamma
\int_0^\Tend a_0(\xi_e,\xi_e)\diff t
-
\frac12a_0(G(0),G(0))
-
\frac12a_1(G(0),G(0)).
\end{align*}
Every term on the right-hand side is non-positive. Therefore, all of them vanish. In particular,
\begin{align*}
\displaystyle
\int_0^\Tend\norm{\xi_e(t)}_H^2\diff t=0,
\end{align*}
so
\begin{align*}
\displaystyle
\xi_e=0
\quad
\text{a.e. on }(0,\Tend).
\end{align*}
Relation \eqref{eq:exp-e-xi} then gives
\begin{align*}
\displaystyle
e=0.
\end{align*}
Therefore, the weak solution is unique.
\end{proof}

\begin{remark}[Why the integrated test is special to one relaxation time]
\label{rem:uniqopen}
The proof of \cref{thm:uniqexp} does not test the difference $e$ itself. Instead, it uses a time primitive of the $V$-valued memory potential $\xi_e$. This avoids assuming
\begin{align*}
\displaystyle
e\in L^2(0,\Tend;V).
\end{align*}
The argument closes for the exponential kernel because
\begin{align*}
\displaystyle
\eta_e=\gamma\xi_e.
\end{align*}
For a general completely monotone kernel,
\begin{align*}
\displaystyle
\xi_e
=
\int_{[0,\infty)}
\zeta_e(\lambda)\diff\nu(\lambda),
\quad
\eta_e
=
\int_{[0,\infty)}
\lambda\zeta_e(\lambda)\diff\nu(\lambda),
\end{align*}
and these two fields are not generally proportional. The resulting cross terms therefore have no definite sign. This is only a limitation of the integrated energy argument. \Cref{thm:graphwp} already gives unconditional uniqueness in the extended memory space whenever $M_0<\infty$. What remains outside the present graph-space theory is the infinite-mass case $M_0=\infty$.
\end{remark}

\begin{remark}[Fractional and infinite-mass kernels]
\label{rem:fracopen}
For the fractional kernel
\begin{align*}
\displaystyle
k(t)
=
\frac{t^{-\alpha}}{\Gamma(1-\alpha)},
\quad
0<\alpha<1,
\end{align*}
the representing measure is
\begin{align*}
\displaystyle
\diff\nu_\alpha(\lambda)
=
\frac{\sin(\pi\alpha)}{\pi}
\lambda^{\alpha-1}\diff\lambda.
\end{align*}
Its three relevant quantities are all infinite, but for different reasons. First,
\begin{align*}
\displaystyle
\norm{k}_{L^1(0,\infty)}
=
\int_{(0,\infty)}
\frac1\lambda\diff\nu_\alpha(\lambda)
=
\frac{\sin(\pi\alpha)}{\pi}
\int_0^\infty
\lambda^{\alpha-2}\diff\lambda
=
\infty.
\end{align*}
This divergence occurs at $\lambda=0$ and corresponds to the long-time tail generated by slow relaxation modes. By contrast,
\begin{align*}
\displaystyle
M_0
=
\int_{[0,\infty)}\diff\nu_\alpha
=
\infty,
\quad
M_1
=
\int_{[0,\infty)}
\lambda\diff\nu_\alpha
=
\infty
\end{align*}
because of divergence at $\lambda=\infty$. These fast relaxation modes correspond to the singular behaviour
\begin{align*}
\displaystyle
k(0^+)=\infty.
\end{align*}
The solvability of fractional memory equations is not itself open. Rather, the finite-moment construction developed here does not provide the present coercivity-independent graph-space certification when $M_0=M_1=\infty$. Indeed, the bounds
\begin{align*}
\displaystyle
a_1(\xi,\xi)
\leq
M_0
\int a_1(\zeta,\zeta)\diff\nu
\end{align*}
and
\begin{align*}
\displaystyle
a_1(\eta,\eta)
\leq
M_1
\int
\lambda a_1(\zeta,\zeta)\diff\nu
\end{align*}
become vacuous. A corresponding certified theory for infinite-mass kernels therefore requires a different state space and different weighted estimates.
\end{remark}

\begin{remark}[Reduction to a degenerate Kelvin--Voigt equation]
\label{rem:KV}
For the exponential kernel, the internal variable satisfies
\begin{align*}
\displaystyle
\partial_t\zeta+\gamma\zeta=u.
\end{align*}
Substituting
\begin{align*}
\displaystyle
u=\partial_t\zeta+\gamma\zeta
\end{align*}
into the principal equation gives
\begin{align*}
\displaystyle
\partial_{tt}\zeta
+
(\gamma+\mathsf A_0)\partial_t\zeta
+
\gamma(\mathsf A_0+\mathsf A_1)\zeta
=
f.
\end{align*}
Thus, the single-relaxation-time model is equivalent to a degenerate Kelvin--Voigt, or strongly damped wave, equation; for the spectral and stability theory of Kelvin--Voigt damping, see \cite{ChenLiuLiu1999,LiuZheng1999,LiuRao2006}. For a general completely monotone kernel, no single internal variable is available, and the corresponding reduced equation remains non-local in time.
\end{remark}

\section{Uniform graph-space stability}\label{sec:certified}
The no-go theorem excludes a frequency-uniform $L^2(0,\Tend;V)$-coercivity estimate supplied by the memory term. It does not, however, prevent stability in the extended memory space. The purpose of this subsection is to record that stability with explicit constants and to identify the corresponding target for a certified discretisation.

Although $a_0$ may still contribute non-negative dissipation, no positive lower bound for $a_0$ is used. The estimates below therefore remain valid for families of problems in which the instantaneous coercivity constant tends to zero.

We set
\begin{align}\label{eq:D0-def}
D_0
:=
\norm{u_0}_H^2
+
\norm{f}_{L^2(0,\Tend;H)}^2
\end{align}
and recall from \eqref{eq:CT-def} the explicit constant $C_{\Tend}=(2+\Tend)e^{\Tend}$ delivered by \cref{thm:apriori}.

\begin{corollary}[Explicit stability in the memory state space]
\label{cor:cert}
Assume \textnormal{(B1)--(B3)} together with
\begin{align*}
\displaystyle
0<M_0<\infty,
\quad
M_1<\infty,
\end{align*}
and let $X=(u,\zeta)$ be the solution given by \cref{thm:exist}. Then,
\begin{align}\label{eq:cert-graph}
&\sup_{t\in[0,\Tend]}
\left[
\norm{u(t)}_H^2
+
\int_{[0,\infty)}
a_1(\zeta(\lambda,t),\zeta(\lambda,t))
\diff\nu(\lambda)
\right]
\nonumber\\
&\quad
+
2\int_0^\Tend
\int_{[0,\infty)}
\lambda
a_1(\zeta(\lambda,t),\zeta(\lambda,t))
\diff\nu(\lambda)\diff t
\leq
C_{\Tend}D_0.
\end{align}
The memory potential and the first-moment field satisfy the more explicit bounds
\begin{align}\label{eq:cert-xi}
\sup_{t\in[0,\Tend]}
\seminorm{\xi(t)}_{V}^2
&\leq
\frac{M_0}{\beta}
e^{\Tend}D_0,
\\
\int_0^\Tend
\seminorm{\eta(t)}_{V}^2
\diff t
&\leq
\frac{M_1}{2\beta}
\left(1+\Tend e^{\Tend}\right)D_0.
\label{eq:cert-eta}
\end{align}
The same bounds hold for the full norm $\norm{\cdot}_{V}^{2}=\norm{\cdot}_{H}^{2}+\seminorm{\cdot}_{V}^{2}$ after multiplication of the right-hand sides by $1+C_{P}^{2}$, by the Poincar\'e inequality.
If, in addition,
\begin{align*}
\displaystyle
u\in L^2(0,\Tend;V),
\end{align*}
then the full energy estimate holds:
\begin{align}\label{eq:cert-full}
&\sup_{t\in[0,\Tend]}
\left[
\norm{u(t)}_H^2
+
\int_{[0,\infty)}
a_1(\zeta(\lambda,t),\zeta(\lambda,t))
\diff\nu(\lambda)
\right]
\nonumber\\
&\quad
+
2\int_0^\Tend a_0(u(t),u(t))\diff t
\nonumber\\
&\quad
+
2\int_0^\Tend
\int_{[0,\infty)}
\lambda
a_1(\zeta(\lambda,t),\zeta(\lambda,t))
\diff\nu(\lambda)\diff t
\leq
C_{\Tend}D_0.
\end{align}
None of these constants involves a positive coercivity constant for $a_0$.
\end{corollary}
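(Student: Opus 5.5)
The plan is to read off \cref{eq:cert-graph} from \cref{thm:exist}, and to obtain \cref{eq:cert-xi,eq:cert-eta} by splitting the Gr\"onwall argument of \cref{thm:apriori} into its two halves.

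\emph{Step 1: the graph estimate.} Estimate \cref{eq:cert-graph} is exactly \cref{eq:reduced-apriori} of \cref{thm:exist}, with $D_0$ as in \cref{eq:D0-def}. It holds for the mild solution by the strong approximation and lower semicontinuity argument given there. Likewise, \cref{eq:cert-full} is the final assertion of \cref{thm:exist} under $u\in L^2(0,\Tend;V)$, combined with \cref{thm:apriori}.

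\emph{Step 2: the bound for $\xi$.} I would work on the strong approximations $X_n$ and track the two parts of the proof of \cref{thm:apriori} separately. The Gr\"onwall part gives
\[
\sup_t\mathcal E_n(t)\le \tfrac12 e^{\Tend}D_{0,n},
\]
so $\sup_t\norm{\zeta_n(t)}_{\mathcal K}^2\le e^{\Tend}D_{0,n}$. Here the initial memory energy is absorbed into $D_{0,n}$, which tends to $D_0$ because $X_{0,n}\to(u_0,0)$. Passing to the limit in $C([0,\Tend];\mathcal K)$ gives
\[
\sup_t\norm{\zeta(t)}_{\mathcal K}^2\le e^{\Tend}D_0 .
\]
Then \cref{eq:xi-graph-bound} together with coercivity, $\beta\seminorm{\xi}_V^2\le a_1(\xi,\xi)\le M_0\norm{\zeta}_{\mathcal K}^2$, yields \cref{eq:cert-xi}.

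\emph{Step 3: the bound for $\eta$.} Integrating the energy identity gives
\[
\int_0^\Tend\mathcal R_n\,\diff t\le\tfrac12(1+\Tend e^{\Tend})D_{0,n}.
\]
Since $a_0\ge0$, the memory part $\int_0^\Tend\!\int\lambda\,a_1(\zeta_n,\zeta_n)\,\diff\nu\,\diff t$ has the same bound. The limit passage on truncations $[0,\Lambda]$ followed by monotone convergence, exactly as in Step 3 of \cref{thm:exist}, transfers this bound to $\zeta$. Then the Cauchy--Schwarz estimate \cref{eq:eta-CS}, integrated in time and combined with $\beta\seminorm{\eta}_V^2\le a_1(\eta,\eta)$, gives \cref{eq:cert-eta}.

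\emph{Step 4: full norms.} The full-norm versions follow from $\norm{v}_H\le C_P\seminorm{v}_V$, which gives $\norm{v}_V^2\le(1+C_P^2)\seminorm{v}_V^2$.

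The only delicate point is bookkeeping in Steps 2--3. The two constants $e^{\Tend}$ and $(1+\Tend e^{\Tend})/2$ must come from the separate Gr\"onwall and integrated-dissipation bounds, not from the combined constant $C_{\Tend}$. These bounds are derived on approximations with non-zero initial memory, so I must check that they survive the limit with $D_{0,n}\to D_0$. The justification is lower semicontinuity, or strong convergence in the sup norm for the $\zeta$ term.
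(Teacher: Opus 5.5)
Your proposal is correct and takes essentially the same route as the paper. Both derive separate Gr\"onwall and integrated-dissipation bounds on the strong approximations and pass to the mild solution through the approximation of \cref{thm:exist}; Cauchy--Schwarz with respect to $\nu$ and $\lambda\,\diff\nu(\lambda)$, together with the coercivity of $a_1$, then gives \cref{eq:cert-xi,eq:cert-eta}. You also make explicit a step the paper only summarises as ``strong approximation together with weak lower semicontinuity'': you track $D_{0,n}=\norm{X_{0,n}}_{\Wsp}^2+\norm{f_n}_{L^2(0,\Tend;H)}^2\to D_0$ and pass to the limit by truncation and monotone convergence.
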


\begin{proof}
For regular solutions, \cref{eq:energy-identity}, Young's inequality, and
Gr\"onwall's lemma give
\begin{align*}
\sup_{t\in[0,\Tend]}\mathcal E_0(t)
&\leq e^{\Tend}D_0,\\
2\int_0^{\Tend}\mathcal R_m(t)\diff t
&\leq(1+\Tend e^{\Tend})D_0,
\end{align*}
where
\begin{align*}
\mathcal E_0(t)
&:=\norm{u(t)}_H^2+\norm{\zeta(t)}_{\mathcal K}^2,\\
\mathcal R_m(t)
&:=\int_{[0,\infty)}
\lambda a_1(\zeta(\lambda,t),\zeta(\lambda,t))\diff\nu(\lambda).
\end{align*}
Combining these estimates proves \cref{eq:cert-full} for regular solutions.
The strong approximation used in \cref{thm:exist}, together with weak lower
semicontinuity, gives \cref{eq:cert-graph} for the graph-space solution.

Cauchy--Schwarz with respect to $\nu$ and $\lambda\diff\nu(\lambda)$ gives,
respectively,
\begin{align*}
a_1(\xi(t),\xi(t))
&\leq M_0\norm{\zeta(t)}_{\mathcal K}^2,\\
a_1(\eta(t),\eta(t))
&\leq M_1\mathcal R_m(t).
\end{align*}
The coercivity of $a_1$ now yields \cref{eq:cert-xi,eq:cert-eta}; the full-norm
bounds follow from Poincar\'e's inequality. Finally, if
$u\in L^2(0,\Tend;V)$, the energy identity is justified as in the last part of
\cref{thm:exist}, and \cref{eq:cert-full} follows.
\end{proof}

\section{The vanishing-instantaneous-coercivity limit}\label{sec:vanishing}
\label{subsec:vanishing-coercivity}
The estimates obtained above are uniform with respect to a positive coercivity constant for the instantaneous form. We strengthen this uniform stability into an actual singular-limit result. The memory kernel and its representing measure are kept fixed, while a coercive instantaneous contribution is allowed to vanish.

Let $a_{\mathrm v}:V\times V\to\R$ be a bounded symmetric coercive bilinear form. Thus, there exist constants $C_{\mathrm v},\beta_{\mathrm v}>0$ such that
\begin{align}
\lvert a_{\mathrm v}(v,w)\rvert
&\leq
C_{\mathrm v}\norm{v}_{V}\norm{w}_{V}
\quad
\text{for any }v,w\in V,
\label{eq:viscous-form-bounded}
\\
a_{\mathrm v}(v,v)
&\geq
\beta_{\mathrm v}\seminorm{v}_{V}^{2}
\quad
\text{for any }v\in V.
\label{eq:viscous-form-coercive}
\end{align}
Let $\mathsf B:V\to V'$ be the associated operator:
\begin{align*}
\dual{\mathsf Bv}{w}
=
a_{\mathrm v}(v,w).
\end{align*}
For $\varepsilon\in[0,1]$, we define
\begin{align}
a_0^\varepsilon(v,w)
&:=
a_0(v,w)
+
\varepsilon a_{\mathrm v}(v,w),
\label{eq:perturbed-form}
\\
\mathsf A_0^\varepsilon
&:=
\mathsf A_0+\varepsilon\mathsf B.
\label{eq:perturbed-operator}
\end{align}
For $\varepsilon>0$, the form $a_0^\varepsilon$ is coercive on $V$, whereas $a_0^0=a_0$ may be degenerate.

On the fixed memory graph space $\Wsp=H\times\mathcal K$, we define
\begin{align}
\mathcal A_\varepsilon(u,\zeta)
:=
\left(
-\mathsf A_0^\varepsilon u-\mathsf A_1J\zeta,\,
Eu-\Lambda\zeta
\right)
\label{eq:perturbed-generator}
\end{align}
with domain
\begin{align}
D(\mathcal A_\varepsilon)
:=
\left\{
(u,\zeta)\in V\times\mathcal K:
\mathsf A_0^\varepsilon u+\mathsf A_1J\zeta\in H,\ 
Eu-\Lambda\zeta\in\mathcal K
\right\}.
\label{eq:perturbed-generator-domain}
\end{align}
Thus, $\mathcal A_0=\mathcal A$.

\begin{proposition}[Uniform norm-resolvent convergence]
\label{prop:norm-resolvent}
Assume \textnormal{(B1)} and
\begin{align*}
0<M_0=\nu([0,\infty))<\infty.
\end{align*}
Then, for any $\varepsilon\in[0,1]$, the operator $\mathcal A_\varepsilon$ defined as \cref{eq:perturbed-generator,eq:perturbed-generator-domain} is densely defined and $m$-dissipative on $\Wsp$. Furthermore, there exists a constant $C_{\mathrm{res}}>0$, depending only on the forms $a_1$, $a_{\mathrm v}$, the measure $\nu$, and the embedding constants of $V\hookrightarrow H$, but independent of $\varepsilon$, such that
\begin{align}
\norm{
(I-\mathcal A_\varepsilon)^{-1}
-
(I-\mathcal A_0)^{-1}
}_{\mathcal L(\Wsp)}
\leq
C_{\mathrm{res}}\varepsilon
\quad
\text{for any }\varepsilon\in[0,1].
\label{eq:norm-resolvent-convergence}
\end{align}
In particular,
\begin{align}
(I-\mathcal A_\varepsilon)^{-1}Y
\longrightarrow
(I-\mathcal A_0)^{-1}Y
\quad
\text{in }\Wsp
\label{eq:strong-resolvent-convergence}
\end{align}
for any $Y\in\Wsp$.
\end{proposition}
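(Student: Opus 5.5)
First, $m$-dissipativity and density of the domain for every $\varepsilon\in[0,1]$ follow by repeating Steps~1--3 of the proof of \cref{thm:graphwp} with $a_0$ replaced by $a_0^\varepsilon$. This works because $a_0^\varepsilon(v,v)\geq a_0(v,v)\geq0$ and the Lax--Milgram form becomes
\[
b_\varepsilon(u,v)=(u,v)_H+a_0^\varepsilon(u,v)+c_\nu a_1(u,v).
\]
This form is coercive on $V$ with constant $\min\{1,c_\nu\beta\}$, independent of $\varepsilon$. The only new ingredient is boundedness of $a_0^\varepsilon$, which holds uniformly for $\varepsilon\in[0,1]$ by \eqref{eq:viscous-form-bounded}.

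For the resolvent estimate, I will use the explicit resolvent from Step~2 of \cref{thm:graphwp}. Given $Y=(F,G)\in\Wsp$, set $h$ as in \eqref{eq:cg-def} and let $u_\varepsilon\in V$ solve
\[
b_\varepsilon(u_\varepsilon,v)=(F,v)_H-a_1(h,v)\quad\text{for all }v\in V.
\]
Then $\zeta_\varepsilon(\lambda)=(u_\varepsilon+G(\lambda))/(1+\lambda)$, so
\[
(I-\mathcal A_\varepsilon)^{-1}Y-(I-\mathcal A_0)^{-1}Y=\Bigl(u_\varepsilon-u_0,\ \tfrac{u_\varepsilon-u_0}{1+\lambda}\Bigr).
\]
Its $\Wsp$-norm squared is at most
\[
\norm{u_\varepsilon-u_0}_H^2+M_0\,a_1(u_\varepsilon-u_0,u_\varepsilon-u_0),
\]
which is bounded by a constant times $\norm{u_\varepsilon-u_0}_V^2$. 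Everything therefore reduces to an $O(\varepsilon)$ bound for $w:=u_\varepsilon-u_0$ in $V$. Subtracting the two variational problems gives
\[
b_0(w,v)=-\varepsilon\,a_{\mathrm v}(u_\varepsilon,v),
\]
and testing with $v=w$ together with coercivity of $b_0$ yields
\[
\norm{w}_V\leq \varepsilon\,C_{\mathrm v}\norm{u_\varepsilon}_V/\min\{1,c_\nu\beta\}.
\]
(One can equally run this as the commutator identity $R_\varepsilon-R_0=\varepsilon R_\varepsilon(\mathsf B,0)R_0$ suitably interpreted, but the scalar reduction is cleaner.)

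It remains to bound $\norm{u_\varepsilon}_V$ by $\norm{Y}_{\Wsp}$ uniformly in $\varepsilon$. Testing $b_\varepsilon(u_\varepsilon,u_\varepsilon)$ and dropping the non-negative term $a_0^\varepsilon$ gives
\[
\min\{1,c_\nu\beta\}\norm{u_\varepsilon}_V^2\leq \norm{F}_H\norm{u_\varepsilon}_H+a_1(h,h)^{1/2}a_1(u_\varepsilon,u_\varepsilon)^{1/2}.
\]
Using $a_1(h,h)^{1/2}\leq M_0^{1/2}\norm{G}_{\mathcal K}$ and the boundedness of $a_1$, this gives $\norm{u_\varepsilon}_V\leq C\norm{Y}_{\Wsp}$ with $C$ depending only on $c_\nu$, $\beta$, $M_0$, $\norm{a_1}$ and $C_P$. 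Combining the two bounds gives \eqref{eq:norm-resolvent-convergence}, with $C_{\mathrm{res}}$ depending only on the listed quantities. Strong convergence \eqref{eq:strong-resolvent-convergence} is then immediate.

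The main point to check carefully is that the coercivity constant of $b_\varepsilon$ never uses any lower bound on $a_0^\varepsilon$, so the constant is uniform down to $\varepsilon=0$; it relies only on $c_\nu>0$. Another point is that $u_\varepsilon$ is controlled in $V$ rather than merely in $H$, which is needed because $a_{\mathrm v}(u_\varepsilon,\cdot)$ requires a $V$-bound. This is ensured by the coercive memory contribution $c_\nu a_1$ in the resolvent problem.
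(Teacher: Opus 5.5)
Your proposal is correct and follows essentially the same route as the paper: you reduce the resolvent difference to the Lax--Milgram difference $u_\varepsilon-u_0$, bound $u_\varepsilon$ in $V$ uniformly in $\varepsilon$ using the $\varepsilon$-independent coercivity constant $\min\{1,c_\nu\beta\}$, test the subtracted equation with $u_\varepsilon-u_0$, and transfer the bound to the internal component via $\zeta_\varepsilon-\zeta_0=(u_\varepsilon-u_0)/(1+\lambda)$. Carried through, your estimates give exactly the paper's constant $C_{\mathrm{res}}=C_{\mathrm v}(1+M_0C_1)/\min\{1,c_\nu\beta\}^2$, where $C_1$ is a continuity constant of $a_1$; the Poincar\'e constant you list is not actually needed, since $\norm{\cdot}_H\leq\norm{\cdot}_V$ by definition of the full norm.
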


\begin{proof}
For every $\varepsilon\in[0,1]$, the form
$a_0^\varepsilon=a_0+\varepsilon a_{\mathrm v}$ is bounded, symmetric, and
non-negative. Hence the proof of \cref{thm:graphwp}, with $a_0$ replaced by
$a_0^\varepsilon$, shows that $\mathcal A_\varepsilon$ is densely defined and
$m$-dissipative on the same space $\Wsp$.

Fix $Y=(F,G)\in\Wsp$ and write
\begin{align*}
(u_\varepsilon,\zeta_\varepsilon)
:=(I-\mathcal A_\varepsilon)^{-1}Y.
\end{align*}
As in the resolvent calculation in the proof of \cref{thm:graphwp},
\begin{align*}
\zeta_\varepsilon(\lambda)
&=\frac{u_\varepsilon+G(\lambda)}{1+\lambda},\\
(u_\varepsilon,v)_H+a_0(u_\varepsilon,v)
&+\varepsilon a_{\mathrm v}(u_\varepsilon,v)
+c_\nu a_1(u_\varepsilon,v)
=(F,v)_H-a_1(h_G,v),
\end{align*}
where
\begin{align*}
c_\nu:=\int_{[0,\infty)}\frac{1}{1+\lambda}\diff\nu(\lambda),
\qquad
h_G:=\int_{[0,\infty)}\frac{G(\lambda)}{1+\lambda}\diff\nu(\lambda).
\end{align*}
Let $C_1$ be a continuity constant for $a_1$ and set
\begin{align*}
\kappa_\nu:=\min\{1,c_\nu\beta\},
\qquad
C_Y:=(1+M_0C_1)^{1/2}.
\end{align*}
The Lax--Milgram estimate used in \cref{thm:graphwp} gives, uniformly in
$\varepsilon$,
\begin{align}\label{eq:res-u-uniform}
\norm{u_\varepsilon}_V
\leq
\frac{C_Y}{\kappa_\nu}\norm{Y}_{\Wsp}.
\end{align}
Let $(u_0,\zeta_0):=(I-\mathcal A_0)^{-1}Y$ and
$e_\varepsilon:=u_\varepsilon-u_0$. Subtracting the two reduced equations gives
\begin{align*}
(e_\varepsilon,v)_H+a_0(e_\varepsilon,v)+c_\nu a_1(e_\varepsilon,v)
=-\varepsilon a_{\mathrm v}(u_\varepsilon,v).
\end{align*}
Taking $v=e_\varepsilon$, using \eqref{eq:res-u-uniform}, and the continuity of
$a_{\mathrm v}$, we obtain
\begin{align*}
\norm{e_\varepsilon}_V
\leq
\varepsilon\frac{C_{\mathrm v}C_Y}{\kappa_\nu^2}\norm{Y}_{\Wsp}.
\end{align*}
Moreover,
\begin{align*}
\zeta_\varepsilon(\lambda)-\zeta_0(\lambda)
=\frac{e_\varepsilon}{1+\lambda},
\end{align*}
so
\begin{align*}
\norm{\zeta_\varepsilon-\zeta_0}_{\mathcal K}^2
\leq M_0C_1\norm{e_\varepsilon}_V^2.
\end{align*}
Consequently,
\begin{align*}
\norm{(I-\mathcal A_\varepsilon)^{-1}Y
-(I-\mathcal A_0)^{-1}Y}_{\Wsp}
\leq
\varepsilon
\frac{C_{\mathrm v}(1+M_0C_1)}{\kappa_\nu^2}
\norm{Y}_{\Wsp}.
\end{align*}
This proves \cref{eq:norm-resolvent-convergence} with
\begin{align*}
C_{\mathrm{res}}
:=\frac{C_{\mathrm v}(1+M_0C_1)}{\kappa_\nu^2}.
\end{align*}
\end{proof}

The passage from resolvent convergence to semigroup convergence is usually quoted from the Trotter--Kato theorem. In the present contractive setting, the passage is elementary, and carrying it out explicitly has a quantitative benefit: the semigroup difference inherits the $O(\varepsilon)$ resolvent rate of \cref{prop:norm-resolvent} on resolvent-smoothed initial states. For $\varepsilon\in[0,1]$, \cref{prop:norm-resolvent} and the Lumer--Phillips theorem provide the contraction semigroup $(S_\varepsilon(t))_{t\geq0}$ generated by $\mathcal A_\varepsilon$ on $\Wsp$, exactly as in \cref{thm:graphwp}. The following identity is the whole content of the direct argument.

\begin{lemma}[Semigroup difference through the resolvent difference]
\label{lem:semigroup-resolvent}
Assume \textnormal{(B1)} and
\begin{align*}
0<M_0=\nu([0,\infty))<\infty,
\end{align*}
and let $\varepsilon\in[0,1]$. Write
\begin{align*}
R_\varepsilon:=(I-\mathcal A_\varepsilon)^{-1},
\qquad
R_0:=(I-\mathcal A_0)^{-1}.
\end{align*}
Then, for every $y\in D(\mathcal A_0)$ and every $t\geq0$,
\begin{align}
R_\varepsilon S_0(t)y-S_\varepsilon(t)R_\varepsilon y
=
\int_0^t
S_\varepsilon(t-s)\,
(R_0-R_\varepsilon)\,
S_0(s)(I-\mathcal A_0)y
\diff s,
\label{eq:resolvent-commutator}
\end{align}
and consequently
\begin{align}
\norm{R_\varepsilon S_0(t)y-S_\varepsilon(t)R_\varepsilon y}_{\Wsp}
\leq
t\,
\norm{R_\varepsilon-R_0}_{\mathcal L(\Wsp)}\,
\norm{(I-\mathcal A_0)y}_{\Wsp}.
\label{eq:resolvent-commutator-bound}
\end{align}
\end{lemma}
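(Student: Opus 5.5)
The plan is the standard Duhamel interpolation argument. For fixed $t>0$ and $y\in D(\mathcal A_0)$, consider the auxiliary function
\[
\Phi(s):=S_\varepsilon(t-s)\,R_\varepsilon\,S_0(s)y,\qquad s\in[0,t].
\]
Its endpoint values are $\Phi(0)=S_\varepsilon(t)R_\varepsilon y$ and $\Phi(t)=R_\varepsilon S_0(t)y$. The left-hand side of \eqref{eq:resolvent-commutator} is therefore $\Phi(t)-\Phi(0)$, and the task reduces to showing that $\Phi\in C^1([0,t];\Wsp)$ and computing $\Phi'$.

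First I check that the derivative makes sense. Since $y\in D(\mathcal A_0)$, the orbit $s\mapsto S_0(s)y$ is $C^1$ with derivative $\mathcal A_0S_0(s)y=S_0(s)\mathcal A_0y$, and it stays in $D(\mathcal A_0)$. Since $R_\varepsilon$ maps $\Wsp$ into $D(\mathcal A_\varepsilon)$, the vector $R_\varepsilon S_0(s)y$ lies in $D(\mathcal A_\varepsilon)$. The standard product rule for $s\mapsto T(t-s)x(s)$, with $x\in C^1([0,t];\Wsp)$, $x(s)\in D(\mathcal A_\varepsilon)$ and $\mathcal A_\varepsilon x$ continuous, then gives
\[
\Phi'(s)=S_\varepsilon(t-s)\bigl[-\mathcal A_\varepsilon R_\varepsilon S_0(s)y+R_\varepsilon\mathcal A_0S_0(s)y\bigr].
\]
The map $s\mapsto\mathcal A_\varepsilon R_\varepsilon S_0(s)y=(R_\varepsilon-I)S_0(s)y$ is continuous, so the hypotheses of the product rule hold. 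The product rule itself is proved by splitting the difference quotient into two pieces and using the uniform boundedness of $S_\varepsilon$; this is the only place where some care is needed.

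Next I simplify the bracket, which is the algebraic core of the argument. Using $\mathcal A_\varepsilon R_\varepsilon=R_\varepsilon-I$ and $\mathcal A_0=I-R_0^{-1}$ on $D(\mathcal A_0)$, and setting $w:=S_0(s)(I-\mathcal A_0)y$, so that $S_0(s)y=R_0w$ because $S_0$ commutes with $R_0$, the bracket becomes
\[
-(R_\varepsilon-I)R_0w+R_\varepsilon(R_0-I)w=R_0w-R_\varepsilon w=(R_0-R_\varepsilon)S_0(s)(I-\mathcal A_0)y.
\]
Integrating $\Phi'$ from $0$ to $t$ gives \eqref{eq:resolvent-commutator}.

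Finally, the bound \eqref{eq:resolvent-commutator-bound} follows from $\norm{S_\varepsilon(t-s)}\le1$ and $\norm{S_0(s)}\le1$, both of which hold because the semigroups are contractions by \cref{thm:graphwp} and \cref{prop:norm-resolvent}. Estimating the integrand pointwise by $\norm{R_\varepsilon-R_0}\,\norm{(I-\mathcal A_0)y}$ and integrating over $[0,t]$ yields the factor $t$. The main, though routine, obstacle is justifying differentiability of $\Phi$; the remaining steps are algebra and norm bounds.
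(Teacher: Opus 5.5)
Your proposal is correct and follows essentially the same route as the paper: the same interpolant $S_\varepsilon(t-s)R_\varepsilon S_0(s)y$, the same difference-quotient justification via uniform boundedness and $\mathcal A_\varepsilon R_\varepsilon=R_\varepsilon-I$, and the same contraction bound. The only difference is cosmetic: you write $S_0(s)y=R_0w$ with $w=S_0(s)(I-\mathcal A_0)y$ in the algebraic step, whereas the paper splits $\mathcal A_0x=x-(I-\mathcal A_0)x$ and uses $x=R_0(I-\mathcal A_0)x$; both give $(R_0-R_\varepsilon)S_0(s)(I-\mathcal A_0)y$.
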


\begin{proof}
We first record that the resolvents are contractions. For $X\in D(\mathcal A_\varepsilon)$, dissipativity gives
\begin{align*}
\norm{(I-\mathcal A_\varepsilon)X}_{\Wsp}\norm{X}_{\Wsp}
\geq
\dual{(I-\mathcal A_\varepsilon)X}{X}_{\Wsp}
=
\norm{X}_{\Wsp}^2-\dual{\mathcal A_\varepsilon X}{X}_{\Wsp}
\geq
\norm{X}_{\Wsp}^2,
\end{align*}
so $\norm{R_\varepsilon}_{\mathcal L(\Wsp)}\leq1$ for every $\varepsilon\in[0,1]$.

Fix $t>0$ and $y\in D(\mathcal A_0)$, and define
\begin{align*}
\psi(s):=S_\varepsilon(t-s)\,R_\varepsilon\,S_0(s)y,
\qquad
s\in[0,t].
\end{align*}
Because $y\in D(\mathcal A_0)$, the orbit $s\mapsto S_0(s)y$ is continuously differentiable in $\Wsp$ with derivative $S_0(s)\mathcal A_0y$. Hence
\begin{align*}
z(s):=R_\varepsilon S_0(s)y
\end{align*}
is continuously differentiable with $z'(s)=R_\varepsilon S_0(s)\mathcal A_0y$, takes values in $D(\mathcal A_\varepsilon)$, and
\begin{align*}
\mathcal A_\varepsilon z(s)=(R_\varepsilon-I)S_0(s)y
\end{align*}
is continuous in $s$, because $\mathcal A_\varepsilon R_\varepsilon=R_\varepsilon-I$ is bounded. For an admissible increment $h$, split
\begin{align*}
\psi(s+h)-\psi(s)
=
S_\varepsilon(t-s-h)\bigl(z(s+h)-z(s)\bigr)
+
\bigl(S_\varepsilon(t-s-h)-S_\varepsilon(t-s)\bigr)z(s).
\end{align*}
Dividing by $h$ and letting $h\to0$ from either side, the first term converges to $S_\varepsilon(t-s)z'(s)$ by the uniform contraction bound and strong continuity, and the second converges to $-S_\varepsilon(t-s)\mathcal A_\varepsilon z(s)$ because $z(s)\in D(\mathcal A_\varepsilon)$. Hence $\psi$ is differentiable on $[0,t]$ with continuous derivative
\begin{align*}
\psi'(s)
=
S_\varepsilon(t-s)
\bigl(
R_\varepsilon\mathcal A_0x-\mathcal A_\varepsilon R_\varepsilon x
\bigr),
\qquad
x:=S_0(s)y\in D(\mathcal A_0),
\end{align*}
where we also used $S_0(s)\mathcal A_0y=\mathcal A_0S_0(s)y$. Using $\mathcal A_\varepsilon R_\varepsilon=R_\varepsilon-I$ and $\mathcal A_0x=x-(I-\mathcal A_0)x$,
\begin{align*}
R_\varepsilon\mathcal A_0x-\mathcal A_\varepsilon R_\varepsilon x
&=
R_\varepsilon x-R_\varepsilon(I-\mathcal A_0)x
-R_\varepsilon x+x
\\
&=
x-R_\varepsilon(I-\mathcal A_0)x
=
(R_0-R_\varepsilon)(I-\mathcal A_0)x,
\end{align*}
where the last step uses $x=R_0(I-\mathcal A_0)x$. Since $(I-\mathcal A_0)S_0(s)y=S_0(s)(I-\mathcal A_0)y$, we obtain
\begin{align*}
\psi'(s)
=
S_\varepsilon(t-s)\,
(R_0-R_\varepsilon)\,
S_0(s)(I-\mathcal A_0)y.
\end{align*}
Integrating the continuous function $\psi'$ over $[0,t]$ and using
\begin{align*}
\psi(t)=R_\varepsilon S_0(t)y,
\qquad
\psi(0)=S_\varepsilon(t)R_\varepsilon y,
\end{align*}
we arrive at \cref{eq:resolvent-commutator}. The bound \cref{eq:resolvent-commutator-bound} follows because $S_\varepsilon(t-s)$ and $S_0(s)$ are contractions.
\end{proof}

\begin{theorem}[Vanishing instantaneous coercivity]
\label{thm:vanishing-coercivity}
Assume \textnormal{(B1)} and
\begin{align*}
0<M_0=\nu([0,\infty))<\infty.
\end{align*}
For $\varepsilon\in[0,1]$, let $(S_\varepsilon(t))_{t\geq0}$ be the contraction semigroup generated by $\mathcal A_\varepsilon$ on $\Wsp$. Then, for any $X_0\in\Wsp$ and any $\Tend>0$,
\begin{align}
\sup_{0\leq t\leq\Tend}
\norm{
S_\varepsilon(t)X_0-S_0(t)X_0
}_{\Wsp}
\longrightarrow0
\quad
\text{as }\varepsilon\downarrow0.
\label{eq:semigroup-convergence}
\end{align}
More generally, suppose that
\begin{align}
X_{0,\varepsilon}
&\longrightarrow
X_{0,0}
\quad
\text{in }\Wsp,
\label{eq:initial-data-convergence}
\\
f_\varepsilon
&\longrightarrow
f_0
\quad
\text{in }L^1(0,\Tend;H).
\label{eq:forcing-convergence}
\end{align}
Let
\begin{align}
X^\varepsilon(t)
:=
S_\varepsilon(t)X_{0,\varepsilon}
+
\int_0^t
S_\varepsilon(t-s)
(f_\varepsilon(s),0)
\diff s.
\label{eq:epsilon-mild}
\end{align}
Then,
\begin{align}
X^\varepsilon
\longrightarrow
X^0
\quad
\text{in } \mathcal{C}([0,\Tend];\Wsp).
\label{eq:mild-solution-convergence}
\end{align}
\end{theorem}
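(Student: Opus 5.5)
The plan is to derive \cref{eq:semigroup-convergence} from the commutator identity of \cref{lem:semigroup-resolvent}, the $O(\varepsilon)$ resolvent bound of \cref{prop:norm-resolvent}, and a density argument that uses only the uniform contraction bound $\norm{S_\varepsilon(t)}_{\mathcal L(\Wsp)}\leq1$. We then pass to the inhomogeneous problem by dominated convergence in the variation-of-constants formula.

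\emph{Step 1: smoothed data.} Let $X_0=R_0^2z$ with $z\in\Wsp$, and put $y:=R_0z\in D(\mathcal A_0)$, so that $(I-\mathcal A_0)y=z$. We decompose
\begin{align*}
S_\varepsilon(t)X_0-S_0(t)X_0
&=S_\varepsilon(t)(R_0-R_\varepsilon)y
+\bigl(S_\varepsilon(t)R_\varepsilon y-R_\varepsilon S_0(t)y\bigr)\\
&\quad+(R_\varepsilon-R_0)S_0(t)y.
\end{align*}
Here we used $S_0(t)R_0y=R_0S_0(t)y$. The first and third terms are bounded by $C_{\mathrm{res}}\varepsilon\norm{y}_{\Wsp}$, using the contraction property of $S_\varepsilon,S_0$ together with \cref{eq:norm-resolvent-convergence}. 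The middle term is bounded by $\Tend C_{\mathrm{res}}\varepsilon\norm{z}_{\Wsp}$ by \cref{eq:resolvent-commutator-bound}. Hence
\[
\sup_{t\leq\Tend}\norm{S_\varepsilon(t)X_0-S_0(t)X_0}_{\Wsp}\leq C_{\mathrm{res}}\varepsilon(2+\Tend)\norm{z}_{\Wsp}
\]
for such $X_0$, which gives the $O(\varepsilon)$ rate mentioned in the introduction.

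\emph{Step 2: density.} The range of $R_0^2$ is $D(\mathcal A_0^2)$, and this set is dense in $\Wsp$ for the generator of a $C_0$-semigroup. A direct argument is also available: $nR_0(n)\to I$ strongly, and $R_0(n)$ commutes with $R_0$. Alternatively, Step 3 of \cref{thm:graphwp} shows that $\operatorname{Ran}R_0$ is dense, and the same orthogonality argument applied twice shows that $\operatorname{Ran}R_0^2$ is dense. For general $X_0$, choose $\tilde X_0=R_0^2z$ with $\norm{X_0-\tilde X_0}_{\Wsp}<\delta$. The contractions then give
\[
\sup_t\norm{(S_\varepsilon-S_0)(t)X_0}_{\Wsp}\leq2\delta+C\varepsilon.
\]
Letting $\varepsilon\downarrow0$ and then $\delta\downarrow0$ proves \cref{eq:semigroup-convergence}. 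The main (mild) obstacle is this density of $D(\mathcal A_0^2)$. I would cite \cite{Pazy1983}, or argue via $R_0\Wsp=D(\mathcal A_0)$ dense: $R_0$ is bounded with dense range, so it maps a dense set onto a set dense in $\operatorname{Ran}R_0$, hence dense in $\Wsp$.

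\emph{Step 3: inhomogeneous solutions.} We write
\begin{align*}
X^\varepsilon(t)-X^0(t)
&=S_\varepsilon(t)(X_{0,\varepsilon}-X_{0,0})+(S_\varepsilon(t)-S_0(t))X_{0,0}\\
&\quad+\int_0^tS_\varepsilon(t-s)(f_\varepsilon(s)-f_0(s),0)\diff s\\
&\quad+\int_0^t(S_\varepsilon(t-s)-S_0(t-s))(f_0(s),0)\diff s.
\end{align*}
The first and third terms are bounded uniformly in $t$ by $\norm{X_{0,\varepsilon}-X_{0,0}}_{\Wsp}+\norm{f_\varepsilon-f_0}_{L^1(0,\Tend;H)}$. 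The second term tends to zero uniformly by Step 2. For the last term we bound it by
\[
\int_0^{\Tend}\sup_{r\leq\Tend}\norm{(S_\varepsilon(r)-S_0(r))(f_0(s),0)}_{\Wsp}\diff s.
\]
The integrand tends to zero for almost every $s$ by \cref{eq:semigroup-convergence} applied to $X_0=(f_0(s),0)$. It is dominated by $2\norm{f_0(s)}_H\in L^1(0,\Tend)$, and it is measurable because it can be approximated by suprema over rational $r$, using strong continuity. Dominated convergence then yields \cref{eq:mild-solution-convergence}.
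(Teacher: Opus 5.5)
Your proposal is correct and follows the paper's proof essentially step for step: the same three-term decomposition on resolvent-smoothed data (your $R_0^2z$ with $y=R_0z$ is exactly the paper's set $\{R_0y: y\in D(\mathcal A_0)\}$), closed by \cref{eq:resolvent-commutator-bound} and the $O(\varepsilon)$ resolvent estimate, and the same density argument (your last variant, via $R_0$ bounded with dense range, is precisely the paper's choice of $z$ and then $y$). The only deviation is in the inhomogeneous step. There you apply dominated convergence to $s\mapsto\sup_{r\le\Tend}\norm{(S_\varepsilon(r)-S_0(r))(f_0(s),0)}_{\Wsp}$, whereas the paper first treats simple functions and then passes to general $L^1$ data by density and the contraction bound. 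Both routes are valid and cost about the same.
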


\begin{proof}
Throughout, write $R_\varepsilon:=(I-\mathcal A_\varepsilon)^{-1}$ and recall from \cref{prop:norm-resolvent} that
\begin{align*}
\norm{R_\varepsilon-R_0}_{\mathcal L(\Wsp)}
\leq
C_{\mathrm{res}}\,\varepsilon,
\qquad
\varepsilon\in[0,1].
\end{align*}
We divide the proof into three steps.

\emph{Step 1: resolvent-smoothed initial states.} Let $y\in D(\mathcal A_0)$ and consider the initial state $R_0y$. Because the resolvent of a generator commutes with the semigroup it generates,
\begin{align*}
S_0(t)R_0y=R_0S_0(t)y.
\end{align*}
Therefore,
\begin{align*}
S_\varepsilon(t)R_0y-S_0(t)R_0y
=
S_\varepsilon(t)(R_0-R_\varepsilon)y
+
\bigl(S_\varepsilon(t)R_\varepsilon y-R_\varepsilon S_0(t)y\bigr)
+
(R_\varepsilon-R_0)S_0(t)y.
\end{align*}
The first and third terms are each bounded in $\Wsp$ by $C_{\mathrm{res}}\,\varepsilon\norm{y}_{\Wsp}$, because $S_\varepsilon(t)$ and $S_0(t)$ are contractions. The middle term is bounded by \cref{eq:resolvent-commutator-bound}. Hence, for every $t\in[0,\Tend]$,
\begin{align}
\norm{S_\varepsilon(t)R_0y-S_0(t)R_0y}_{\Wsp}
\leq
C_{\mathrm{res}}\,\varepsilon
\Bigl(
2\norm{y}_{\Wsp}
+
\Tend\norm{(I-\mathcal A_0)y}_{\Wsp}
\Bigr).
\label{eq:smoothed-rate}
\end{align}
In particular, on initial states of the form $X_0=R_0y$ with $y\in D(\mathcal A_0)$, that is, on $D(\mathcal A_0^2)$, the semigroup convergence holds at the explicit rate $O(\varepsilon)$.

\emph{Step 2: density.} Let $X_0\in\Wsp$ and $\delta>0$. By the density of $D(\mathcal A_0)$ in $\Wsp$, established in Step~3 of the proof of \cref{thm:graphwp}, we may choose $z\in D(\mathcal A_0)$ with
\begin{align*}
\norm{X_0-z}_{\Wsp}<\delta,
\end{align*}
and then $y\in D(\mathcal A_0)$ with
\begin{align*}
\norm{y-(I-\mathcal A_0)z}_{\Wsp}<\delta.
\end{align*}
Since $\norm{R_0}_{\mathcal L(\Wsp)}\leq1$, as recorded in the proof of \cref{lem:semigroup-resolvent}, and $R_0(I-\mathcal A_0)z=z$,
\begin{align*}
\norm{R_0y-z}_{\Wsp}
=
\norm{R_0\bigl(y-(I-\mathcal A_0)z\bigr)}_{\Wsp}
<\delta,
\end{align*}
hence $\norm{X_0-R_0y}_{\Wsp}<2\delta$. Because all the semigroups are contractions, \cref{eq:smoothed-rate} gives
\begin{align*}
\sup_{0\leq t\leq\Tend}
\norm{S_\varepsilon(t)X_0-S_0(t)X_0}_{\Wsp}
\leq
4\delta
+
C_{\mathrm{res}}\,\varepsilon
\Bigl(
2\norm{y}_{\Wsp}
+
\Tend\norm{(I-\mathcal A_0)y}_{\Wsp}
\Bigr).
\end{align*}
Letting first $\varepsilon\downarrow0$ and then $\delta\downarrow0$ proves \cref{eq:semigroup-convergence}.

\emph{Step 3: the inhomogeneous problem.} Use the variation-of-constants formula. The terms
containing $X_{0,\varepsilon}-X_{0,0}$ and $f_\varepsilon-f_0$ tend to zero by the
contraction property. The remaining term is
\begin{align*}
\int_0^t
\bigl(S_\varepsilon(t-s)-S_0(t-s)\bigr)(f_0(s),0)\diff s.
\end{align*}
Its convergence to zero uniformly for $t\in[0,\Tend]$ follows first for
$\Wsp$-valued simple functions from \cref{eq:semigroup-convergence}, and
then for general $L^1$ data by density and the contraction bound. This proves
\cref{eq:mild-solution-convergence}.
\end{proof}

\begin{corollary}[A conditional quantitative convergence rate]
\label{cor:vanishing-rate}
Assume the hypotheses of \cref{thm:vanishing-coercivity}. Suppose that all problems have the same initial datum and forcing:
\begin{align}
X^\varepsilon(0)
=
X^0(0)
=
(u_0,0),
\quad
f_\varepsilon=f_0=f.
\label{eq:same-data-epsilon}
\end{align}
Assume, in addition, that the physical component of the limiting solution satisfies
\begin{align}
u^0\in L^2(0,\Tend;V).
\label{eq:limit-V-regularity}
\end{align}
Set
\begin{align*}
e^\varepsilon
:=
u^\varepsilon-u^0,
\quad
z^\varepsilon
:=
\zeta^\varepsilon-\zeta^0.
\end{align*}
Then, for any $t\in[0,\Tend]$,
\begin{align}
&
\norm{e^\varepsilon(t)}_H^2
+
\norm{z^\varepsilon(t)}_{\mathcal K}^2
+
2\int_0^t
a_0(e^\varepsilon(s),e^\varepsilon(s))
\diff s
+
\varepsilon
\int_0^t
a_{\mathrm v}
(e^\varepsilon(s),e^\varepsilon(s))
\diff s
\nonumber\\
&\quad
+
2\int_0^t
\int_{[0,\infty)}
\lambda
a_1
\left(
z^\varepsilon(\lambda,s),
z^\varepsilon(\lambda,s)
\right)
\diff\nu(\lambda)\diff s
\nonumber\\
&\leq
\varepsilon
\int_0^t
a_{\mathrm v}(u^0(s),u^0(s))
\diff s.
\label{eq:vanishing-pointwise-energy}
\end{align}
In particular,
\begin{align}
\sup_{t\in[0,\Tend]}
\left[
\norm{e^\varepsilon(t)}_H^2
+
\norm{z^\varepsilon(t)}_{\mathcal K}^2
\right]
\leq
\varepsilon
\int_0^\Tend
a_{\mathrm v}(u^0(t),u^0(t))
\diff t,
\label{eq:vanishing-energy-sup}
\end{align}
and
\begin{align}
\norm{
X^\varepsilon-X^0
}_{\mathcal C([0,\Tend];\Wsp)}
\leq
\varepsilon^{1/2}
\left(
\int_0^\Tend
a_{\mathrm v}(u^0(t),u^0(t))
\diff t
\right)^{1/2}.
\label{eq:vanishing-sqrt-rate}
\end{align}
Thus, the convergence rate in the memory graph norm is
$O(\varepsilon^{1/2})$.
\end{corollary}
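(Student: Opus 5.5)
The estimate comes from an energy argument on the difference $(e^\varepsilon,z^\varepsilon)$. It solves the $\varepsilon$-problem with zero data and the defect forcing $-\varepsilon\mathsf B u^0$. Formally, subtracting the two augmented systems gives
\begin{align*}
\partial_t e^\varepsilon+\mathsf A_0^\varepsilon e^\varepsilon+\mathsf A_1Jz^\varepsilon=-\varepsilon\mathsf B u^0,
\qquad
\partial_t z^\varepsilon+\Lambda z^\varepsilon=Ee^\varepsilon,
\end{align*}
with $e^\varepsilon(0)=0$ and $z^\varepsilon(0)=0$. Testing the first equation with $e^\varepsilon$ and the second with $z^\varepsilon$ in $\mathcal K$, the coupling terms cancel by \eqref{eq:adjoint-coupling}, exactly as in \eqref{eq:dissip-identity}. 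This yields
\begin{align*}
\frac12\frac{\diff}{\diff t}\bigl(\norm{e^\varepsilon}_H^2+\norm{z^\varepsilon}_{\mathcal K}^2\bigr)
+a_0(e^\varepsilon,e^\varepsilon)+\varepsilon a_{\mathrm v}(e^\varepsilon,e^\varepsilon)
+\int\lambda a_1(z^\varepsilon,z^\varepsilon)\diff\nu
=-\varepsilon a_{\mathrm v}(u^0,e^\varepsilon).
\end{align*}
Cauchy--Schwarz for the symmetric non-negative form $a_{\mathrm v}$, followed by Young's inequality, gives
\[
\bigl|\varepsilon a_{\mathrm v}(u^0,e^\varepsilon)\bigr|\le \tfrac{\varepsilon}{2}a_{\mathrm v}(e^\varepsilon,e^\varepsilon)+\tfrac{\varepsilon}{2}a_{\mathrm v}(u^0,u^0).
\]
Absorbing the first term, multiplying by $2$ and integrating over $(0,t)$ gives \eqref{eq:vanishing-pointwise-energy}. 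Dropping the non-negative dissipation terms gives \eqref{eq:vanishing-energy-sup}. Taking square roots and the supremum over $t$, since $\norm{X^\varepsilon-X^0}_{\Wsp}^2=\norm{e^\varepsilon}_H^2+\norm{z^\varepsilon}_{\mathcal K}^2$, gives \eqref{eq:vanishing-sqrt-rate}.

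The main obstacle is justifying this computation for mild solutions, since neither $u^\varepsilon$ nor $u^0$ is a priori regular. I would proceed in three steps.

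\textbf{Regularity of $u^\varepsilon$.} For $\varepsilon>0$ the form $a_0^\varepsilon$ is coercive, so $u^\varepsilon\in L^2(0,\Tend;V)$. This follows from the energy identity for strong approximations, which bounds $\varepsilon\int_0^\Tend a_{\mathrm v}(u_n^\varepsilon,u_n^\varepsilon)\diff t$ uniformly in $n$, together with weak compactness in $L^2(0,\Tend;V)$.

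\textbf{Regularity of $u^0$.} By hypothesis \eqref{eq:limit-V-regularity}, $u^0\in L^2(0,\Tend;V)$.

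\textbf{Validity of the difference system.} Since $\varepsilon\mathsf Bu^0\in L^2(0,\Tend;V')$ and $e^\varepsilon\in L^2(0,\Tend;V)$, the difference system holds in $V'$. The first equation then gives $\partial_t e^\varepsilon\in L^2(0,\Tend;V')$. The Lions--Magenes chain rule therefore applies to $\norm{e^\varepsilon}_H^2$, as in the last step of \cref{thm:exist} and in \cref{prop:uniq}.

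For the internal component, I would use the modewise representation of \cref{lem:modewise}(ii). It gives $z^\varepsilon(t)=\mathscr R e^\varepsilon(t)$. Hence, for $\nu$-a.e.\ $\lambda$, the function $z^\varepsilon(\lambda,\cdot)$ is an absolutely continuous $V$-valued function with $\partial_t z^\varepsilon=e^\varepsilon-\lambda z^\varepsilon\in L^2(0,\Tend;V)$. The scalar identity
\[
\tfrac12\frac{\diff}{\diff t}a_1(z^\varepsilon,z^\varepsilon)=a_1(e^\varepsilon,z^\varepsilon)-\lambda a_1(z^\varepsilon,z^\varepsilon)
\]
then holds modewise. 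I would integrate it in $\lambda$ over $[0,L]$ first and then let $L\to\infty$ by monotone convergence, in the spirit of \cref{lem:positive-type}. This avoids any a priori integrability question for $\lambda a_1(z^\varepsilon,z^\varepsilon)$.

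If the weak-compactness route for $u^\varepsilon$ proves delicate, there is an alternative. Approximate by strong solutions of both problems, with data in $D(\mathcal A_\varepsilon)\cap D(\mathcal A_0)$ and regularised $u^0$. Derive the inequality for the approximations and pass to the limit, using lower semicontinuity on the left-hand side and $L^2(0,\Tend;V)$ convergence of the regularised $u^0$ on the right-hand side.
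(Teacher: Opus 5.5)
Your proposal is correct and follows the paper's proof essentially step by step. The shared steps are: $L^2(0,\Tend;V)$-regularity of $u^\varepsilon$ from the strong-approximation energy bound on $\varepsilon a_{\mathrm v}(u^\varepsilon_n,u^\varepsilon_n)$; the difference equation in $V'$ with defect $-\varepsilon\mathsf B u^0$ and the Lions--Magenes chain rule; the causal representative of $z^\varepsilon$ from \cref{lem:modewise}(ii), tested modewise in the $a_1$-inner product with truncation in $\lambda$; the adjoint cancellation; and Cauchy--Schwarz plus Young for $a_{\mathrm v}$.

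The only minor difference is how you pass $L\to\infty$: you use monotone convergence on the dissipative terms, while the paper cites the graph-energy bounds for $X^\varepsilon$ and $X^0$. Your version still needs dominated convergence for the coupling term $\int_{[0,L]}a_1(e^\varepsilon,z^\varepsilon(\lambda))\diff\nu$. Your fallback route via regularised approximations is not needed.
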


\begin{proof}
We first justify the regularity needed for the energy calculation.

For any fixed $\varepsilon>0$, the perturbed instantaneous form satisfies
\begin{align*}
a_0^\varepsilon(v,v)
\geq
\varepsilon\beta_{\mathrm v}
\seminorm{v}_V^2.
\end{align*}
The strong-approximation argument used in the proof of \cref{thm:exist}, applied to the generator $\mathcal A_\varepsilon$, gives
\begin{align*}
\varepsilon
\int_0^\Tend
a_{\mathrm v}
(u^\varepsilon(t),u^\varepsilon(t))
\diff t
<\infty.
\end{align*}
Because $\varepsilon>0$ is fixed and $a_{\mathrm v}$ is coercive,
\begin{align}
u^\varepsilon
\in L^2(0,\Tend;V).
\label{eq:ueps-V-regularity}
\end{align}
By assumption,
\begin{align*}
u^0\in L^2(0,\Tend;V).
\end{align*}
Therefore,
\begin{align}
e^\varepsilon
=
u^\varepsilon-u^0
\in L^2(0,\Tend;V).
\label{eq:e-V-regularity}
\end{align}
Let
\begin{align*}
\xi^\varepsilon:=J\zeta^\varepsilon,
\quad
\xi^0:=J\zeta^0.
\end{align*}
The boundedness of $J:\mathcal K\to V$ gives
\begin{align*}
Jz^\varepsilon
=
\xi^\varepsilon-\xi^0
\in\mathcal C([0,\Tend];V).
\end{align*}
Subtracting the two principal equations, we obtain
\begin{align}
\partial_t e^\varepsilon
+
\mathsf A_0e^\varepsilon
+
\varepsilon\mathsf B e^\varepsilon
+
\mathsf A_1Jz^\varepsilon
=
-\varepsilon\mathsf Bu^0
\quad
\text{in }V'.
\label{eq:difference-principal}
\end{align}
All the terms on the right-hand side belong to $L^2(0,\Tend;V')$. Therefore,
\begin{align}
\partial_t e^\varepsilon
\in L^2(0,\Tend;V').
\label{eq:e-time-regularity}
\end{align}
Together with \eqref{eq:e-V-regularity}, this gives
\begin{align*}
e^\varepsilon
\in
L^2(0,\Tend;V)
\cap
H^1(0,\Tend;V')
\hookrightarrow
\mathcal C([0,\Tend];H),
\end{align*}
and the Hilbert-space chain rule yields
\begin{align}
\dual{
\partial_t e^\varepsilon
}{
e^\varepsilon
}_{V',V}
=
\frac12
\frac{\diff}{\diff t}
\norm{e^\varepsilon}_H^2
\quad
\text{in }\mathcal D'(0,\Tend).
\label{eq:e-chain-rule}
\end{align}
By \cref{lem:modewise}(ii), applied to $X^\varepsilon$ and to $X^0$, the internal-variable difference satisfies $z^\varepsilon(t)=(\mathscr Re^\varepsilon)(t)$ in $\mathcal H_{\nu}$ for every $t\in[0,\Tend]$; the proof of \cref{lem:modewise} applies verbatim to every $\mathcal A_\varepsilon$, $\varepsilon\in[0,1]$, because each $\mathcal A_\varepsilon$ is $m$-dissipative and densely defined and the internal-variable component of \cref{eq:perturbed-generator} does not depend on $\varepsilon$.  Because $e^\varepsilon\in L^2(0,\Tend;V)$ by \eqref{eq:e-V-regularity}, the causal representative
\begin{align*}
\widetilde z^\varepsilon(\lambda,t)
:=
\int_0^t
e^{-\lambda(t-s)}
e^\varepsilon(s)\diff s
\end{align*}
belongs, for every $\lambda\geq0$, to $H^1(0,\Tend;V)$, and, for every $t$, it defines the same element of $\mathcal K$ as $z^\varepsilon(t)$: the two families agree in $H$ for $\nu$-almost every $\lambda$, and $V\hookrightarrow H$ is injective.  We keep the notation $z^\varepsilon$ for this representative.  The internal-variable difference then satisfies, for every $\lambda\geq0$ and almost every $t\in(0,\Tend)$,
\begin{align}
\partial_t z^\varepsilon(\lambda,t)
+
\lambda z^\varepsilon(\lambda,t)
=
e^\varepsilon(t),
\quad
z^\varepsilon(\lambda,0)=0.
\label{eq:difference-internal}
\end{align}
Because $e^\varepsilon\in L^2(0,\Tend;V)$, this equation may be tested with $z^\varepsilon(\lambda,t)$ in the $a_1$-inner product. We obtain
\begin{align}
&
\frac12
\frac{\diff}{\diff t}
a_1
\left(
z^\varepsilon(\lambda,t),
z^\varepsilon(\lambda,t)
\right)
+
\lambda
a_1
\left(
z^\varepsilon(\lambda,t),
z^\varepsilon(\lambda,t)
\right)
=
a_1
\left(
e^\varepsilon(t),
z^\varepsilon(\lambda,t)
\right).
\label{eq:difference-internal-energy-lambda}
\end{align}
One may first integrate this identity over a bounded interval
$[0,\Lambda]$ in the relaxation variable. The graph-energy bounds
for $X^\varepsilon$ and $X^0$, together with
\begin{align*}
a_1(z^\varepsilon,z^\varepsilon)
\leq
2a_1(\zeta^\varepsilon,\zeta^\varepsilon)
+
2a_1(\zeta^0,\zeta^0),
\end{align*}
and the analogous inequality for the weighted dissipation, justify
the limit $\Lambda\to\infty$ by monotone and dominated
convergence. We therefore obtain
\begin{align}
&
\frac12
\frac{\diff}{\diff t}
\norm{z^\varepsilon(t)}_{\mathcal K}^2
+
\int_{[0,\infty)}
\lambda
a_1
\left(
z^\varepsilon(\lambda,t),
z^\varepsilon(\lambda,t)
\right)
\diff\nu(\lambda)
\nonumber\\
&\quad =
\int_{[0,\infty)}
a_1
\left(
e^\varepsilon(t),
z^\varepsilon(\lambda,t)
\right)
\diff\nu(\lambda).
\label{eq:difference-internal-energy}
\end{align}
We test \eqref{eq:difference-principal} with
$e^\varepsilon(t)$. Using
\eqref{eq:e-chain-rule}, we obtain
\begin{align}
&
\frac12
\frac{\diff}{\diff t}
\norm{e^\varepsilon(t)}_H^2
+
a_0(e^\varepsilon,e^\varepsilon)
+
\varepsilon
a_{\mathrm v}(e^\varepsilon,e^\varepsilon)
+
a_1(Jz^\varepsilon,e^\varepsilon)
=
-\varepsilon
a_{\mathrm v}(u^0,e^\varepsilon).
\label{eq:difference-principal-energy}
\end{align}
By the adjoint coupling identity and the symmetry of $a_1$,
\begin{align}
a_1(Jz^\varepsilon,e^\varepsilon)
=
\int_{[0,\infty)}
a_1
\left(
e^\varepsilon,
z^\varepsilon(\lambda)
\right)
\diff\nu(\lambda).
\label{eq:difference-cancellation}
\end{align}
Consequently, adding \eqref{eq:difference-internal-energy} to \eqref{eq:difference-principal-energy} cancels the coupling terms and gives
\begin{align}
&
\frac12
\frac{\diff}{\diff t}
\left[
\norm{e^\varepsilon(t)}_H^2
+
\norm{z^\varepsilon(t)}_{\mathcal K}^2
\right]
+
a_0(e^\varepsilon,e^\varepsilon)
+
\varepsilon
a_{\mathrm v}(e^\varepsilon,e^\varepsilon)
\nonumber\\
&\quad
+
\int_{[0,\infty)}
\lambda
a_1
\left(
z^\varepsilon(\lambda),
z^\varepsilon(\lambda)
\right)
\diff\nu(\lambda)
=
-\varepsilon
a_{\mathrm v}(u^0,e^\varepsilon).
\label{eq:difference-total-energy}
\end{align}
Because $a_{\mathrm v}$ is symmetric and positive definite, its Cauchy--Schwarz inequality gives
\begin{align*}
\lvert
a_{\mathrm v}(u^0,e^\varepsilon)
\rvert
\leq
a_{\mathrm v}(u^0,u^0)^{1/2}
a_{\mathrm v}(e^\varepsilon,e^\varepsilon)^{1/2}.
\end{align*}
Young's inequality therefore yields
\begin{align}
-\varepsilon
a_{\mathrm v}(u^0,e^\varepsilon)
&\leq
\varepsilon
\lvert
a_{\mathrm v}(u^0,e^\varepsilon)
\rvert
\leq
\frac{\varepsilon}{2}
a_{\mathrm v}(u^0,u^0)
+
\frac{\varepsilon}{2}
a_{\mathrm v}(e^\varepsilon,e^\varepsilon).
\label{eq:av-young}
\end{align}
Substitution into \eqref{eq:difference-total-energy} gives
\begin{align}
&
\frac{\diff}{\diff t}
\left[
\norm{e^\varepsilon(t)}_H^2
+
\norm{z^\varepsilon(t)}_{\mathcal K}^2
\right]
+
2a_0(e^\varepsilon,e^\varepsilon)
+
\varepsilon
a_{\mathrm v}(e^\varepsilon,e^\varepsilon)
\nonumber\\
&\quad
+
2
\int_{[0,\infty)}
\lambda
a_1
\left(
z^\varepsilon(\lambda),
z^\varepsilon(\lambda)
\right)
\diff\nu(\lambda)
\leq
\varepsilon
a_{\mathrm v}(u^0,u^0).
\label{eq:difference-energy-inequality}
\end{align}
The initial data are identical, and both internal-variable families initially vanish. Hence,
\begin{align*}
e^\varepsilon(0)=0,
\quad
z^\varepsilon(\lambda,0)=0
\quad
\text{for }\nu\text{-almost every }\lambda.
\end{align*}
Integrating \eqref{eq:difference-energy-inequality} from $0$ to $t$ gives precisely \eqref{eq:vanishing-pointwise-energy}. Dropping all non-negative integral terms from \eqref{eq:vanishing-pointwise-energy} gives
\begin{align*}
\norm{e^\varepsilon(t)}_H^2
+
\norm{z^\varepsilon(t)}_{\mathcal K}^2
\leq
\varepsilon
\int_0^t
a_{\mathrm v}(u^0(s),u^0(s))
\diff s.
\end{align*}
Taking the supremum over $t\in[0,\Tend]$ proves
\eqref{eq:vanishing-energy-sup}. Finally,
\begin{align*}
\norm{
X^\varepsilon(t)-X^0(t)
}_{\Wsp}^2
=
\norm{e^\varepsilon(t)}_H^2
+
\norm{z^\varepsilon(t)}_{\mathcal K}^2,
\end{align*}
so taking square roots gives
\eqref{eq:vanishing-sqrt-rate}.
\end{proof}

\begin{remark}[The instantaneous form varies, not the memory kernel]
\label{rem:operator-versus-kernel-limit}
The limit in \cref{thm:vanishing-coercivity} keeps the completely monotone kernel $k$ and its representing measure $\nu$ fixed. The singular parameter acts instead on the instantaneous spatial form:
\begin{align*}
a_0^\varepsilon
=
a_0+\varepsilon a_{\mathrm v}.
\end{align*}
It is therefore different from singular-memory limits in which the kernel itself changes or concentrates. The result also strengthens the uniform estimate of \cref{cor:cert}. That estimate shows that the graph-space stability constant does not deteriorate as the instantaneous coercivity vanishes, whereas \cref{thm:vanishing-coercivity} shows that the corresponding solutions actually converge in the fixed memory graph space to the degenerate solution generated by $\mathcal A_0$. The rate in \cref{cor:vanishing-rate} requires the additional regularity $u^0\in L^2(0,\Tend;V)$. This assumption is not automatic in the degenerate regime and should not be viewed as a consequence of the memory dissipation. It is precisely the lack of such frequency-uniform $V$-control that motivates the graph-space formulation.
\end{remark}

\section{Implications for structure-preserving and certified discretisation}\label{sec:discretisation}

\begin{remark}[Viscoelastic interpretation]
The degenerate setting is motivated, for example, by linearised Maxwell-type models of viscoelastic flow without solvent viscosity. In such models the instantaneous Newtonian dissipation is absent, and the relaxation mechanism is carried by the polymeric memory \cite{WangRenardy2011,RenardyHrusaNohel1987}. In the present abstract formulation, the loss of solvent viscosity corresponds to the loss of a positive lower bound for $a_0$, whereas the completely monotone kernel and the form $a_1$ represent the relaxation mechanism. The estimates above isolate the part of the stability that remains uniform when the instantaneous coercivity vanishes.
\end{remark}

\begin{remark}[Consequence of the no-go theorem for certification]
\label{rem:cert-impossible}
The estimate of \cref{cor:cert} is a stability estimate in the extended memory norm. It is not an $L^2(0,\Tend;V)$-coercivity estimate for $u$. This distinction is structural. By \cref{thm:structural-nogo}, the memory dissipation cannot provide a positive frequency-uniform constant $c$ such that
\begin{align*}
\displaystyle
\Dm[u](\Tend)
\geq
c\norm{u}_{L^2(0,\Tend;V)}^2
\end{align*}
for all admissible states. Consequently, a continuous or discrete certification argument whose reliability constant is obtained by dividing by such an instantaneous coercivity constant cannot remain uniform as that constant tends to zero. This does not rule out robust certification in the degenerate regime. It shows instead that the quantity to be certified must be the extended-memory stability measured in \cref{eq:cert-graph}, rather than an unavailable instantaneous $L^2(0,\Tend;V)$-coercivity bound.
\end{remark}

\begin{remark}[A structure-preserving discretisation programme]
\label{rem:cert-open}
The continuous analysis suggests three requirements for a robust discretisation of the degenerate problem.

\begin{enumerate}[leftmargin=2.2em,label=\textnormal{(\roman*)}]
\item
The discrete stability estimate should control the physical variable and the discrete internal variables in an analogue of the extended memory norm appearing in \cref{eq:cert-graph}.

\item
The discrete coupling between the principal equation and the internal-variable equations should preserve the adjoint relation that produces the continuous cancellation
\begin{align*}
\displaystyle
a_1(J\zeta,u)
=
\dual{\zeta}{Eu}_{\mathcal K}.
\end{align*}
Without this cancellation, the stability constant may depend on the missing coercivity of $a_0$.

\item
The reliability constant should remain bounded independently of the instantaneous coercivity parameter, the spatial mesh size, the time step, and, when the representing measure is approximated, the relaxation-spectrum quadrature.
\end{enumerate}

Thus, the natural discrete target is an estimate of the form
\begin{align*}
\displaystyle
\norm{X_h^n}_{\Wsp_h}^2
+
\text{discrete memory dissipation}
\leq
C_{\Tend}\,
\text{discrete data norm},
\end{align*}
with $C_{\Tend}$ independent of the instantaneous coercivity constant. Establishing such an estimate together with a computable and verifiable reliability bound is the certified-discretisation problem suggested by the present continuous theory.
\end{remark}

\section{Concluding remarks}
The principal conclusion is that loss of instantaneous coercivity does not destroy well-posed\-ness, but it changes the natural state space.  When the Bernstein representing measure has finite total mass $0<M_{0}<\infty$, the physical variable and the internal-variable family form an augmented state in $\Wsp=H\times\mathcal K$.  The aggregation and constant-embedding operators are adjoint in the memory energy, so the coupling terms cancel exactly and the augmented generator is $m$-dissipative.  The resulting contraction semigroup gives existence, uniqueness, and Lipschitz dependence on the data without any positive lower bound for $a_{0}$.  For zero prehistory, the trajectories lie in the memory graph space $\mathfrak G_{\nu}(0,\Tend)$ and satisfy the Hadamard estimate \cref{eq:graph-hadamard}.

Under the additional first-moment condition $M_{1}<\infty$, the memory potential $\xi$ and first-moment field $\eta$ provide an encoded weak formulation that never requires an a priori assumption $u\in L^{2}(0,\Tend;V)$.  The semigroup solution satisfies this formulation, attains the initial datum strongly in $H$, and obeys explicit graph-energy estimates.  Uniqueness holds unconditionally in the extended state space; within the larger encoded weak class it holds in the energy subclass, and for a single exponential kernel it holds without the additional $L^{2}(0,\Tend;V)$ assumption.

The same framework is stable under vanishing instantaneous coercivity.  For $a_{0}^{\varepsilon}=a_{0}+\varepsilon a_{v}$, the resolvents converge in operator norm at rate $O(\varepsilon)$ and the corresponding semigroups and inhomogeneous solutions converge in $C([0,\Tend];\Wsp)$.  If the limiting physical component belongs to $L^{2}(0,\Tend;V)$, the graph-norm convergence rate is $O(\varepsilon^{1/2})$.  These estimates identify the continuous quantity that a robust discretisation should preserve: the extended memory energy and its coupling cancellation, rather than an unavailable coercivity constant in the instantaneous energy norm.

The finite-mass restriction is structural for the present choice of state space.  Weakly singular fractional kernels have $M_{0}=M_{1}=\infty$, so the bounded aggregation and embedding estimates used here fail.  Their solvability is classical, but an equally explicit, coercivity-robust and computable stability theory in a suitable infinite-mass memory space remains open.

\paragraph{Funding.}
The author declares that no funds, grants, or other support were received during the preparation of this manuscript.

\paragraph{Data availability.}
No datasets were generated or analysed during the current study.

\paragraph{Competing interests.}
The author declares no competing interests.

\end{document}